\newtheorem{theorem}{Theorem}[section]
\newtheorem{lemma}[theorem]{Lemma}
\newtheorem{proposition}[theorem]{Proposition}
\newtheorem{corollary}[theorem]{Corollary}
\theoremstyle{definition}
\newtheorem{definition}[theorem]{Definition}
\newtheorem{examples}[theorem]{Examples}
\newtheorem{notation}[theorem]{Notation}
\theoremstyle{remark}
\newtheorem{remark}[theorem]{Remark}
\newtheorem{remarks}[theorem]{Remarks}
\numberwithin{equation}{section}
\newcommand{\spandsp}{\mbox{$\qquad\text{and}\qquad$}}
\newcommand{\sandss}{\mbox{$\quad\text{and}\;\;$}}
\newcommand{\sands}{\mbox{$\quad\text{and}\quad$}}
\newcommand{\letbe}{\mathbin{:\!\raisebox{-.36pt}{=}}\,}
\newcommand{\Ker}{\operatorname{Ker}}
\newcommand{\Tor}{\operatorname{Tor}}
\newcommand{\Ext}{\operatorname{Ext}}
\newcommand{\Ho}{\mathop\mathit{Ho}}
\newcommand{\holim}{\mathop\mathrm{holim}\nolimits}
\newcommand{\colim}{\mathop\mathrm{colim}\nolimits}
\newcommand{\hocolim}{\mathop\mathrm{hocolim}\nolimits}
\newcommand{\zk}{\mathcal Z_K}
\newcommand{\cl}{\mathop\mathit{CL}\nolimits}
\newcommand{\fl}{\mathop\mathit{FL}\nolimits}
\newcommand{\oline}[1]{\mbox{${\,\overline{\!#1}}$}}
\newcommand{\daja}{Davis-Januszkiewicz}
\newcommand{\daaja}{Davis and Januszkiewicz}
\newcommand{\djs}{\mbox{$D{\mskip-1mu}J\/$}}
\newcommand{\C}{\mathbb C}
\newcommand{\Q}{\mathbb Q}
\newcommand{\Z}{\mathbb Z}
\newcommand{\s}{\sigma}
\newcommand{\br}[1]{\mbox{$\langle #1\rangle$}}
\newcommand{\cat}[1]{\mbox{\sc #1}}
\newcommand{\scat}[1]{\mbox{\scriptsize{\sc #1}}}
\newcommand{\fcat}[2]{\mbox{\raisebox{1pt}{\rm\scriptsize [}{\sc #1},\hspace{1pt}{\sc #2}\raisebox{1pt}{\rm\scriptsize ]}}\hspace{1pt}}
\newcommand{\llongrightarrow}{\relbar\joinrel\longrightarrow}
\newcommand{\lllongrightarrow}{\relbar\joinrel\llongrightarrow}
\newcommand{\llllongrightarrow}{\relbar\joinrel\lllongrightarrow}
\newcommand{\lllllongrightarrow}{\relbar\joinrel\llllongrightarrow}
\newcommand{\llongleftarrow}{\longleftarrow\joinrel\relbar}
\newcommand{\lllongleftarrow}{\llongleftarrow\joinrel\relbar}
\newcommand{\lzze}{\stackrel{\simeq}{\longrightarrow}\dots\stackrel{\simeq}{\longleftarrow}}
\newcommand{\rla}{\begin{picture}(40,7.5)\put(6,-2.5){$\longleftarrow\joinrel\hspace{-1pt}\relbar\joinrel\hspace{-1pt}\relbar$}\put(6,2.5){$\relbar\joinrel\hspace{-1pt}\relbar\joinrel\hspace{-1pt}\longrightarrow$}
\end{picture}}
\newcommand{\under}{\!\downarrow\!}
\newcommand{\del}{\mbox{\footnotesize{$\Delta$}}}
\newcommand{\APL}{\mbox{$A_{P\!L}$}}
\begin{document}

\title{Categorical aspects of toric topology}

%    Information for first author
\author{Taras E Panov}
%    Address of record for the research reported here
\address{\hspace{-\parindent}Department of Mathematics and Mechanics, Moscow State
University, Leninskie Gory, 119992 Moscow, Russia;\newline
\emph{and}\newline Institute for Theoretical and Experimental
Physics, Moscow 117259, Russia}

\email{tpanov@mech.math.msu.su}
%    \thanks will become a 1st page footnote.
\thanks{The first author was supported by EPSRC Visiting Fellowship
  GR/S55828/01 at the University of Manchester, by the Russian
  Foundation for Basic Research (grants 08-01-00541 and 08-01-91855-KO), and by
  P~Deligne's 2004 Balzan Prize for Mathematics}

%    Information for second author
\author{Nigel Ray} \address{\hspace{-\parindent}School of Mathematics, The University of
Manchester, Oxford Road, Manchester M13~9PL, England}
\email{nigel.ray@manchester.ac.uk}

%    General info
\subjclass[2000]{Primary 55U35, 57R91; Secondary 57N65, 55P35}

\keywords{Homotopy colimit, model category, toric topology}

\begin{abstract}
  We argue for the addition of category theory to the toolkit of toric
  topology, by surveying recent examples and applications. Our case is
  made in terms of toric spaces $X_K$, such as moment-angle complexes
  ${\mathcal Z}_K$, quasitoric manifolds $M$, and Davis-Januszkiewicz
  spaces $D\hspace{-1pt}J(K)$. We first exhibit $X_K$ as the homotopy
  colimit of a diagram of spaces over the small category {\sc
    cat}$(K)$, whose objects are the faces of a finite simplicial
  complex $K$ and morphisms their inclusions. Then we study the
  corresponding {\sc cat}$(K)$-diagrams in various algebraic Quillen
  model categories, and interpret their homotopy colimits as algebraic
  models for $X_K$. Such models encode many standard algebraic
  invariants, and their existence is assured by the Quillen structure.
  We provide several illustrative calculations, often over the
  rationals, including proofs that quasitoric manifolds (and various
  generalisations) are rationally formal; that the rational Pontrjagin
  ring of the loop space $\varOmega D\hspace{-1pt}J(K)$ is isomorphic
  to the quadratic dual of the Stanley-Reisner algebra $\mathbb Q[K]$ for flag
  complexes $K$; and that $D\hspace{-1pt}J(K)$ is coformal precisely
  when $K$ is flag. We conclude by describing algebraic models for the
  loop space $\varOmega D\hspace{-1pt}J(K)$ for any complex $K$, which
  mimic our previous description as a homotopy colimit of topological
  monoids.
\end{abstract}

\maketitle

%
%
%
%
%
%
%
%
%
%%%%%%%%%%%%%%%%%%%%%%%%%%%%%%%%%%%%%%%%%%%%%%%%%%%%%%%%%%%%%%%%%%%%%

\section{Introduction}\label{in}

\subsection{Toric Topology}
Toric topology is rapidly gaining recognition as an area of
independent mathematical interest, and the aim of this article is to
survey aspects of category theory that are exerting a growing
influence on its development. The primary objects of study are derived
from well-behaved actions of the $n$-dimensional torus $T^n$ on a
topological space, and lie in a variety of geometric and algebraic
categories. We therefore refer to them as \emph{toric objects}. Each
of the orbit spaces is equipped with a natural combinatorial
structure, which encodes the distribution of isotropy subgroups and is
determined by a finite simplicial complex $K$. In this context, we
wish to describe and evaluate topological and homotopy theoretic
invariants of toric spaces in terms of combinatorial data associated
to $K$.

A fundamental example is provided by the moment-angle complex $\zk$
\cite{bu-pa02}, which depends only on the choice of $K$. When $K$ has
$m$ vertices, $\zk$ supports a canonical action of $T^m$ whose
quotient is naturally homeomorphic to the cone $CK'$ on the
barycentric subdivision of $K$. If $K$ is the boundary of an
$n$--dimensional simplicial polytope then $CK'$ may be identified with
its dual polytope $P$, which is simple. In this case, certain subtori
$T^{m-n}<T^m$ act freely on $\zk$, and their quotient spaces $M^n$ are
the toric manifolds introduced by \daaja\ \cite{da-ja91}. In order to
avoid confusion with the toric varieties of algebraic geometry, we
follow current convention \cite{bu-pa02} by labelling them
\emph{quasitoric manifolds}. The quotient of any such $M$ by the
$n$--torus $T^m/T^{m-n}$ is naturally homeomorphic to $P$.

It is convenient to describe several families of toric spaces
$X_K$ as colimits of diagrams of topological spaces over the small
category $\cat{cat}(K)$, whose objects are the faces of $K$ and
morphisms their inclusions. In certain cases, algebraic invariants
of $X_K$ may then be described in terms of the corresponding
diagrams in appropriate algebraic categories; in other words, by
algebraic toric objects. A typical example is given by the \daja\
space $\djs(K)$. As introduced in \cite{da-ja91}, $\djs(K)$ is
defined by the Borel construction $\zk\times_{T^m}ET^m$, and its
cohomology ring $H^*(\djs(K);R)$ is isomorphic to the
Stanley-Reisner algebra $R[K]$ \cite{stan96} for any coefficient
ring $R$. Following \cite{bu-pa02} and \cite{p-r-v04}, it may be
interpreted as the colimit of a diagram $BT^K$, whose value on
each face $\sigma$ of $K$ is the cartesian product $BT^\sigma$;
then $R[K]$ is isomorphic to the limit of the corresponding
$\cat{cat}^{op}(K)$-diagram of polynomial algebras $S_R(K)$, whose
value on $\sigma$ is the polynomial algebra $S_R(\sigma)$
generated by its vertices.

Such constructions involve the categories $\cat{cat}(K)$ in a local
r\^ole, and are already well-publicised. We believe that category
theory also offers an important global viewpoint, which suggests a
systematic programme for the evaluation of algebraic and geometrical
invariants of $X_K$.

\subsection{Categorical Motivation}
The motivation for our programme lies in two closely related
observations about the local situation. One is that many familiar
invariants of toric spaces depend only on their homotopy type, yet
homotopy equivalences do not interact well with colimits; the other is
that many of the functors we wish to apply to $X_K$ do not respect
colimits, and may therefore be difficult to evaluate. If, however, we
can justify interpreting $X_K$ as a homotopy colimit \cite{bo-ka72},
such difficulties often evaporate. This possibility seems first to
have been recognised by Welker, Ziegler, and \v{Z}ivaljevi\'{c}
\cite{w-z-z99}, who show that toric varieties themselves are
expressible as homotopy colimits. Our own evidence is provided in
\cite{p-r-v04}, where the loop space $\varOmega\djs(K)$ is modelled
geometrically by a homotopy colimit in the category $\cat{tmon}$ of
topological monoids. Developing these themes leads us naturally into
the world of Quillen model categories \cite{quil67}.

We therefore work with model categories whenever we are able. As well
as being currently fashionable, Quillen's theory suggests questions
that we might not otherwise have asked, and presents challenges of
independent interest. For example, the existence of homotopy colimits
in an arbitrary model category has been known for some time
\cite{hirs03}, but specific constructions are still under development
\cite{walt05} in several cases that we discuss below; and the advent
of \cite{d-h-k-s04} raises the possibility of working in the more
general framework of \emph{homotopical categories}.

We consider images of an arbitrary toric space $X_K$ under various
functors taking values in algebraic and geometric model categories,
and refer to the resulting toric objects as {\it algebraic\/} or {\it
  geometric models\/} for $X_K$. When our models are covariant, we
prove that they are equivalent to the homotopy colimits of the
corresponding diagrams; and when contravariant, to their homotopy
limits.

An influential example is provided by Bousfield and Gugenheim's
treatment \cite{bo-gu76} of the model category $\cat{cdga}$ of
commutative differential graded algebras over the rationals $\Q$.
Their work combines with that of Sullivan \cite{sull78} to show that
problems of rational homotopy theory for nilpotent spaces of finite
type may be solved in the homotopy category $\Ho(\cat{cdga})$ by
applying the PL-cochain functor $\APL(-)$. Many toric spaces are
simply connected CW-complexes of finite type, and their cellular
cochain complexes have trivial differential. So their rationalisations
retain much of the original homotopy theoretic information, and have
interesting geometrical properties of their own.  These may then be
deduced from the contravariant algebraic models $\APL(X_K)$, which we
express as homotopy limits in $\cat{cdga}$. We also consider other
rational models of similar power, such as differential graded
coalgebras, Lie algebras, and Pontrjagin rings.

The categorical viewpoint has already motivated studies such as
\cite{no-ra05}, where the rational formality of $\djs(K)$ is
established for every simplicial complex $K$. We extend this result
below, to a class of toric spaces that includes quasitoric manifolds
and the torus manifolds of \cite{ma-pa06} as special cases. By way of
contrast, we note that calculations of Baskakov~\cite{bask03} and
Denham and Suciu \cite{de-su06} confirm that many moment-angle
complexes $\zk$ support non-trivial Massey products, and so cannot be
formal. A further goal is to place these facts in the context of
\cite{p-r-v04}, where the properties of $\cat{tmon}$ as a geometric
model category play an important r\^ole, but remain implicit. We
therefore study the rational \emph{\it co}formality \cite{ne-mi78} of
$\djs(K)$, which depends on the rational structure of
$\varOmega\djs(K)$ and is verified for any flag complex $K$. Our study
includes an investigation of the Lie algebra
$\pi_*(\varOmega\djs(K))\otimes_{\Z}\Q$, and is related to
calculations of the Pontrjagin ring $H_*(\varOmega\djs(K);\Q)$. It is
particularly fascinating to try and understand how the combinatorial
structure of $K$ influences the commutators and higher commutators
which characterise the latter; our initial calculations in this
direction have already been supplemented by those of \cite{de-su06}.

\subsection{Contents}
The contents of our sections are as follows.

In Section \ref{mocadi} we recall background information on
general category theory, introducing notation as we proceed. We
place special emphasis on the finite categories $\cat{cat}(K)$,
aspects of Quillen's model category theory, and categories of
$\cat{cat}(K)$-diagrams. We introduce our algebraic model
categories in Section \ref{almoca}, where we emphasise less
familiar cases by describing explicit fibrations and cofibrations.
These ideas underlie Section \ref{holico}, where we outline the
construction and relevant properties of homotopy limits and
colimits in terms of fibrant and cofibrant replacement functors.
The remaining sections are focused on applications, and Section
\ref{modjs} begins with straightforward examples arising from the
Stanley-Reisner algebra of a simplical complex $K$ and the related
space $\djs(K)$. Similar examples follow in Section \ref{mozk},
where we discuss algebraic models for moment-angle complexes, and
in Section \ref{moqtm}, where we introduce algebraic models for
quasi-toric manifolds and confirm that they are rationally formal.
We then progress to loop spaces, and in Section \ref{mlsak} we
establish algebraic analogues of our geometric model
\cite{p-r-v04} for $\varOmega\djs(K)$ as a homotopy colimit of
topological monoids. We specialise to flag complexes $K$ in
Section \ref{mlsfk}, by proving that the rational Pontrjagin ring
of $\varOmega\djs(K)$ is isomorphic to the quadratic dual of the
Stanley-Reisner algebra $\Q[K]$ and that $\djs(K)$ is coformal. In
our concluding Section \ref{examp} we give additional examples
concerning the cobar construction and higher commutators in
Pontrjagin rings.

\subsection{Algebraic Conventions}
So far as general algebraic notation is concerned, we work over an
arbitrary commutative ring $R$, usually indicated by means of a
subscript. In many situations $R$ is restricted to the rational
numbers, and in that case only we omit the subscript for reasons of
notational clarity.

We consider finite sets $W$ of generators $w_1$, \dots, $w_m$, which
are graded over the non-negative integers by a dimension function
$|w_j|$, for $1\leq j\leq m$. We write the graded tensor $R$-algebra
on $W$ as $T_R(w_1,\dots,w_m)$, and use the abbreviation $T_R(W)$
whenever possible. Its symmetrisation $S_R(W)$ is the graded
commutative $R$-algebra generated by $W$. If $U$, $V\subseteq W$ are
the subsets of odd and even grading respectively, then $S_R(W)$ is the
tensor product of the exterior algebra $\wedge_R(U)$ and the
polynomial algebra $P_R(V)$. It is also convenient to denote the free
graded Lie algebra on $W$ and its commutative counterpart by
$\fl_R(W)$ and $\cl_R(W)$ respectively; the latter is nothing more
than a free $R$-module.

We adapt this notation to subsets $\alpha\subseteq W$ by writing
$T_R(\alpha)$, $S_R(\alpha)$, $\wedge_R(\alpha)$, $\fl_R(\alpha)$,
and $\cl_R(\alpha)$ as appropriate. Identifying $\alpha$ with its
characteristic function then allows us to denote the square-free
monomial $\prod_\alpha w_i$ by $w_\alpha$ in $S_R(\alpha)\leq
S_R(W)$.

Almost all of our graded algebras have finite type, leading to a
natural coalgebraic structure on their duals. We write the free tensor
coalgebra on $W$ as $T_R\br{W}$;
%$T'_R(W)$
it is isomorphic
to $T_R(W)$ as $R$-modules, and its diagonal is given by
\[
\delta(w_{j_1}\otimes\dots\otimes w_{j_r})=
\sum_{k=0}^r(w_{j_1}\otimes\dots\otimes w_{j_k})
\otimes(w_{j_{k+1}}\otimes\dots\otimes w_{j_r}).
\]
The submodule $S_R\br{W}$
%$S'_R(W)$
of symmetric elements ($w_i\otimes
w_j+(-1)^{|w_i||w_j|}w_j\otimes w_i$, for example) is the graded
cocommutative $R$-coalgebra cogenerated by $W$.

Given $W$, we may sometimes define a differential by denoting the
set of elements $dw_1$, \dots, $dw_m$ by $dW$. The differential
lowers or raises gradings by $1$, depending on whether it is
homological or cohomological respectively. For example, we write
the free differential graded algebra on a single generator $w$ of
positive dimension as $T_R(w,dw)$; the notation is designed to
reinforce the fact that its underlying algebra is the tensor
$R$-algebra on elements $w$ and $dw$.  Similarly, $T_R\br{w,dw}$
is the free differential graded coalgebra on $w$. For further
information on differential graded coalgebras, \cite{h-m-s74}
remains a valuable source.

\subsection{Acknowledgements}
Since 2001, we have benefitted greatly from the advice and
encouragement of many colleagues, particularly John Greenlees,
Dietrich Notbohm, Brooke Shipley, and most notably, Rainer Vogt.
%We also thank the UK EPSRC for their award of Grant GR/S55828/01,
%without which our collaboration would have been impossible.
The second-named author is especially grateful to the organisers of
the
%International Conference on Toric Topology in Osaka, May/June 2006
Osaka conference for the opportunity to present our work there; he is
also responsible for several long delays in completing the latex file,
and offers his apologies to all those colleagues who were promised a
final version in 2002.

%
%
%
%
%
%
%
%
%
%%%%%%%%%%%%%%%%%%%%%%%%%%%%%%%%%%%%%%%%%%%%%%%%%%%%%%%%%%%%%%%%%%%%%%

\section{Diagrams and model categories}\label{mocadi}

In this section we introduce aspects of category theory that are
directly relevant to the study of toric spaces. We begin by recalling
the finite category $\cat{cat}(K)$, associated to an arbitrary
simplicial complex $K$. For global purposes we turn to the notion of a
Quillen model category, and outline its relevance to rational homotopy
theory. The two are interwoven in the study of categories of diagrams,
whose terminology and notation we introduce as we proceed. For more
complete background information we refer to the books of Hirschhorn
\cite{hirs03} and Hovey \cite{hove99}.

We define our simplicial complexes $K$ on a graded set $V$ of {\it
vertices\/} $v_1$, \dots, $v_m$, each of which has dimension $2$. So
$K$ is a collection of {\it faces} $\sigma\subseteq V$, closed under
the formation of subsets and including the empty face $\varnothing$.
% belongs to $K$, and write $K^\times$ for those cases in
%which it is specifically excluded.
Every face determines two particular subcomplexes of $K$, namely the
simplex $\varDelta(\sigma)$, and its boundary $\partial(\sigma)$; the
former consists of all faces $\tau\subseteq\sigma$, and the latter is
obtained by omitting the maximal face $\sigma$. For algebraic
purposes, we occasionally prefer the vertices to have grading $1$, in
which case we may replace $V$ by $U$ and $v_i$ by $u_i$ for emphasis.

We shall work with the following combinatorial categories, noting that
$\del$ is small, and $\cat{cat}(K)$ is small and finite.
\begin{notation}\hfill
\begin{itemize}
\item $\cat{set}$: sets and functions;
\item $\del$: finite ordinals and nondecreasing functions;
\item $\cat{cat}(K)$: faces of a finite simplicial complex $K$, and
their inclusions.
\end{itemize}
\end{notation}

Given a small category $\cat{a}$ and an arbitrary category $\cat{r}$,
a covariant functor $D\colon\cat{a}\to\cat{r}$ is known as an
\emph{$\cat{a}$-diagram} in $\cat{r}$. Such diagrams are themselves
the objects of a \emph{diagram category\/} $\fcat{a}{r}$, whose
morphisms are natural transformations. When $\cat{a}$ is $\del^{op}$,
the diagrams are precisely the simplicial objects in $\cat{r}$, and
are written as $D_\bullet$; the object $D(n)$ is abbreviated to $D_n$
for every $n\geq 0$, and forms the $n$--simplices of
$D_\bullet$. Motivated by the example $\cat{sset}$ of simplicial sets,
we may abbreviate the diagram category to $\cat{sr}$ in this case
only.

We may interpret every object $r$ of $\cat{r}$ as a \emph{constant
  $\cat{a}$-diagram}, and so define the constant functor
$\kappa\colon\cat{r}\rightarrow\fcat{a}{r}$. Whenever $\kappa$ admits
a right or left adjoint $\fcat{a}{r}\rightarrow\cat{r}$, it is known
as the \emph{limit} or \emph{colimit} functor respectively.

For any object $r$ of $\cat{r}$, the objects of the
\emph{overcategory} $\cat{r}\under r$ are morphisms $f\colon
q\rightarrow r$, and the morphisms are the corresponding commutative
triangles; the full subcategory $\cat{r}\!\Downarrow\!r$ is given by
restricting attention to non-identities $f$. Similarly, the objects of
the \emph{undercategory} $r\under\cat{r}$ are morphisms
$f\colon r\rightarrow s$, and the morphisms are the corresponding
triangles; $r\!\Downarrow\!\cat{r}$ is given by restriction to the
non-identities. In $\cat{cat}(K)$ for example, we have that
\[
\cat{cat}(K)\under\sigma=\cat{cat}(\varDelta(\sigma))\sands
\cat{cat}(K)\!\Downarrow\!\sigma=\cat{cat}(\partial(\sigma))
\]
for any face $\sigma$. As usual, we write $\cat{r}(r,s)$ for the set
of morphisms $r\to s$ in $\cat{r}$.

A \emph{model category} $\cat{mc}$ is closed with respect to the
formation of certain limits and colimits, and contains three
distinguished subcategories, whose morphisms are \emph{weak
equivalencies} $e$, \emph{fibrations} $f$, and \emph{cofibrations} $g$
respectively. Unless otherwise stated, these letters denote such
morphisms henceforth. A fibration or cofibration is \emph{acyclic}
whenever it is also a weak equivalence. The three subcategories
satisfy certain axioms, for which we follow Hirschhorn
\cite[Definition 7.1.3]{hirs03}; these strengthen Quillen's original
axioms for a closed model category \cite{quil67} in two minor but
significant ways. Firstly, we demand closure with respect to
\emph{small} limits and colimits, whereas Quillen insists only that
they be finite. Secondly, we demand that every morphism $h$ should
factorise \emph{functorially} as
\begin{equation}\label{mcfacts}
h\;=\;f\cdot g\;=\;f'\cdot g'
\end{equation}
for some acyclic $f$ and $g'$, whereas Quillen insists only that such
factorisations exist. When using results of pioneering authors such as
Bousfield and Gugenheim \cite{bo-gu76} and Quillen \cite{quil69}, we
must take account of these differences.

The axioms imply that initial and terminal objects $\circ$ and $*$
exist in $\cat{mc}$, and that $\cat{mc}\!\downarrow\!M$ and
$M\!\downarrow\!\cat{mc}$ inherit model structures for any object $M$.

An object of $\cat{mc}$ is \emph{cofibrant} when the natural morphism
$\circ\to M$ is a cofibration, and is \emph{fibrant} when the natural
morphism $M\to *$ is a fibration. A \emph{cofibrant approximation} to
an object $N$ is a weak equivalence $N'\to N$ with cofibrant source,
and a \emph{fibrant approximation} is a weak equivalence $N\to N''$
with fibrant target. The full subcategories $\cat{mc}_c$,
$\cat{mc}_{\!f}$ and $\cat{mc}_{cf}$ are defined by restricting
attention to those objects of $\cat{mc}$ that are respectively
cofibrant, fibrant, and both. When applied to $\circ\to N$ and $N\to
*$, the factorisations \eqref{mcfacts} determine a \emph{cofibrant
replacement} functor $\omega\colon\cat{mc}\to\cat{mc}_c$, and a
\emph{fibrant replacement} functor
$\phi\colon\cat{mc}\to\cat{mc}_{\!f}$. It follows from the definitions
that $\omega$ and $\phi$ preserve weak equivalences, and that the
associated acyclic fibrations $\omega(N)\to N$ and acyclic
cofibrations $N\to \phi(N)$ form cofibrant and fibrant approximations
respectively.  These ideas are central to our definition of
homotopy limits and colimits, and we shall see many examples below.

Weak equivalences need not be invertible, so objects $M$ and $N$ are
deemed to be \emph{weakly equivalent} if they are linked by a zig-zag
$M\stackrel{e_1}{\longrightarrow}\dots\stackrel{e_n}{\longleftarrow}N$
in $\cat{mc}$; this is the smallest equivalence relation generated by
the weak equivalences. An important consequence of the axioms is the
existence of a localisation functor
$\gamma\colon\cat{mc}\to\Ho(\cat{mc})$, such that $\gamma(e)$ is an
isomorphism in the \emph{homotopy category} $\Ho(\cat{mc})$ for every
weak equivalence. Here $\Ho(\cat{mc})$ has the same objects as
$\cat{mc}$, and is equivalent to a category whose objects are those of
$\cat{mc}_{cf}$, but whose morphisms are homotopy classes of morphisms
between them. In $\cat{mc}_{cf}$, homotopy is an equivalence relation
defined by means of \emph{cylinder} or \emph{path objects}.

Any functor $F$ of model categories that preserves weak equivalences
necessarily induces $\Ho(F)$ on their homotopy categories, although
weaker conditions suffice. Examples of the former include
\begin{equation}\label{hoomfi}
\Ho(\omega)\colon\Ho(\cat{mc})\to\Ho(\cat{mc}_c)\sands
\Ho(\phi)\colon\Ho(\cat{mc})\to\Ho(\cat{mc}_{\!f}).
\end{equation}
Such functors often occur as adjoint pairs
\begin{equation}\label{adjpair}
F\colon\cat{mb}\rla\cat{mc}:\! G\;,
\end{equation}
where $F$ is \emph{left Quillen} if it preserves cofibrations and
acyclic cofibrations, and $G$ is \emph{right Quillen} if it preserves
fibrations and acyclic fibrations. Either of these implies the other,
leading to the notion of a \emph{Quillen pair} $(F,G)$; then Ken
Brown's Lemma \cite[Lemma 7.7.1]{hirs03} applies to show that $F$ and
$G$ preserve all weak equivalences on $\cat{mb}_c$ and
$\cat{mc}_{\!f}$ respectively. So they may be combined with
\eqref{hoomfi} to produce an adjoint pair of \emph{derived functors}
\[
LF\colon\Ho(\cat{mb})\rla\Ho(\cat{mc}):\! RG,
\]
which are equivalences of the homotopy categories (or certain of their
full subcategories) in favourable cases.

Our first examples of model categories are geometric, as
follows.
\begin{notation}\hfill
\begin{itemize}
\item $\cat{top}$: pointed $k$-spaces and continuous maps
 \cite{vogt71};
\item
$\cat{tmon}$: topological monoids and continuous homomorphisms.
%\item $\cat{tgp}$: topological groups and continuous homomorphisms.
\end{itemize}
\end{notation}
We assume that topological monoids are $k$-spaces and are pointed by
their identities, so that %$\cat{tgp}$ and
$\cat{tmon}$ is a subcategory of $\cat{top}$. The standard model
structure for $\cat{top}$ is described in detail by Hovey
\cite[Theorem 2.4.23]{hove99}; weak equivalences induce isomorphisms
of homotopy groups, fibrations are Serre fibrations, and cofibrations
obey the left lifting property with respect to acyclic fibrations. The
model structure for $\cat{tmon}$ is originally due to Schw\"{a}nzl and
Vogt \cite{sc-vo91}, and may also be deduced from Schwede and
Shipley's theory \cite{sc-sh00} of monoids in monoidal model
categories; weak equivalences and fibrations are those homomorphisms
which are weak equivalences and fibrations in \cat{top}, and
cofibrations obey the appropriate lifting property.

Our algebraic categories are defined over arbitrary commutative rings
$R$, but tend only to acquire model structures when $R$ is a field of
characteristic zero. If $R=\Q$, and in this case only, we omit the
subscript from the notation.
\begin{notation}\label{algcats}\hfill
\begin{itemize}
\item $\cat{ch}_R$ and $\cat{coch}_R$: augmented chain and cochain
complexes;
\item $\cat{cdga}_R$: commutative augmented differential graded
algebras, with cohomology differential;
\item
$\cat{dga}_R$: augmented differential graded algebras, with homology
differential;
\item
$\cat{cdgc}_R$: cocommutative supplemented differential graded
coalgebras, with homology differential;
\item
$\cat{dgc}_R$: supplemented differential graded coalgebras, with homology
differential;
\item
$\cat{dgl}$: differential graded Lie algebras over $\Q$, with homology
differential.
\end{itemize}
\end{notation}
For any model structure on these categories, weak equivalences are the
{\it quasi-isomorphisms}, which induce isomorphisms in homology or
cohomology. The fibrations and cofibrations are described in Section
\ref{almoca} below. The augmentations and supplementations act as
algebraic analogues of basepoints.

We reserve the notation $\cat{amc}$ for any of the categories
\ref{algcats}, and assume that objects are graded over the
non-negative integers, and therefore \emph{connective}; for $i\geq 0$,
we denote the full subcategory of $i$-connected objects by
$\cat{amc}_i$. In order to emphasise the differential, we may
sometimes display an object $M$ as $(M,d)$. The homology or cohomology
group $H(M,d)$ is also an $R$-module, and inherits all structure on
$M$ except for the differential. Nevertheless, we may interpret any
graded algebra, coalgebra or Lie algebra as an object of the
corresponding differential category, by imposing $d=0$.
\begin{definition}\label{focofo}
An object $(M,d)$ is {\it formal in\/} $\cat{amc}$ whenever there exists a
zig-zag of quasi-isomorphisms
\begin{equation}\label{zigzag}
  (M,d)=M_1\lzze M_k=(H(M),0).
\end{equation}
\end{definition}
Formality only has meaning in an algebraic model category.

Sullivan's approach to rational homotopy theory is based on the
PL-cochain functor $\APL\colon\cat{top}\rightarrow\cat{cdga}$.
Following \cite{f-h-t01}, $\APL(X)$ is defined as $A^*(S_\bullet X)$,
where $S_\bullet(X)$ denotes the total singular complex of $X$ and
$A^*\colon\cat{sset}\rightarrow\cat{cdga}$ is the polynomial de Rham
functor of \cite{bo-gu76}. The PL-de Rham Theorem yields a natural
isomorphism $H(\APL(X))\rightarrow H^*(X,\Q)$, so $\APL(X)$ provides a
commutative replacement for rational singular cochains, and $\APL$
descends to homotopy categories. Bousfield and Gugenheim describe its
derived functor in terms of minimal models, and prove that it
restricts to an equivalence of appropriate full subcategories of
$\Ho(\cat{top})$ and $\Ho(\cat{cdga})$. In other words, it provides a
contravariant algebraic model for the rational homotopy theory of
well-behaved spaces.

Quillen's approach involves the homotopy groups $\pi_*(\varOmega
X)\otimes_\Z\Q$, which form the \emph{rational homotopy Lie algebra of
$X$} under the \emph{Samelson product}. He constructs a covariant
functor $Q\colon\cat{top}_1\rightarrow\cat{dgl}_0$, and a natural
isomorphism
\[
H(Q(X))\stackrel{\cong}{\longrightarrow}\pi_*(\varOmega
X)\otimes_{\Z}\Q.
\]
for any simply connected $X$. He concludes that $Q$ passes to an
equivalence of homotopy categories; in other words, its derived
functor provides a covariant algebraic model for the rational homotopy
theory of simply connected spaces.

The two approaches are Eckmann-Hilton dual, but the details are
subtle. Each has enabled important calculations, leading to the
solution of significant geometric problems. For examples, and
further details, we refer readers to \cite{f-h-t01}.

Definition \ref{focofo} is consistent with standard terminology, which
describes a topological space $X$ as \emph{formal} when $\APL(X)$ is
formal in $\cat{cdga}$ \cite{sull78}, and \emph{coformal} when $Q(X)$
is formal in $\cat{dgl}$ \cite{ne-mi78}.  In particular, $X$ is formal
whenever there exists a geometric procedure for making a
multiplicative choice of cocycle to represent each cohomology class;
this yields a quasi-isomorphism $H^*(X,\Q)\to\APL(X)$, and applies to
spaces such as $\C P^\infty$.

The importance of categories of simplicial objects is due in part to
the structure of the indexing category $\del^{op}$. Every object $(n)$
has \emph{degree} $n$, and every morphism may be factored uniquely as
a composition of morphisms that raise and lower degree. These
properties are formalised in the notion of a \emph{Reedy category}
$\cat{a}$, which admits generating subcategories $\cat{a}_+$ and
$\cat{a}_-$ whose non-identity morphisms raise and lower degree
respectively.  The diagram category $\fcat{a}{mc}$ then supports a
canonical model structure of its own \cite[Theorem 15.3.4]{hirs03}. By
duality, $\cat{a}^{op}$ is also Reedy, with
$(\cat{a}^{op})_+=(\cat{a}_-)^{op}$ and vice-versa. A simple example
is provided by $\cat{cat}(K)$, whose degree function assigns the
dimension $|\sigma|-1$ to each face $\sigma$ of $K$. So
$\cat{cat}_+(K)$ is the same as $\cat{cat}(K)$, and $\cat{cat}_-(K)$
consists entirely of identities.

In the Reedy model structure on $\fcat{cat$(K)$}{mc}$, weak
equivalences $e\colon C\rightarrow D$ are given \emph{objectwise}, in
the sense that $e(\sigma)\colon C(\sigma)\rightarrow D(\sigma)$ is a
weak equivalence in $\cat{mc}$ for every face $\sigma$ of
$K$. Fibrations are also objectwise. To describe the cofibrations, we
restrict $C$ and $D$ to the overcategories
$\cat{cat}(\partial(\sigma))$, and write $L_\sigma C$ and $L_\sigma D$
for their respective colimits. So $L_\sigma$ is the \emph{latching
functor} of \cite{hove99}, and $g\colon C\rightarrow D$ is a
cofibration precisely when the induced maps
\begin{equation}\label{diagcof}
C(\sigma)\amalg_{L_\sigma C}L_\sigma D\longrightarrow D(\sigma)
\end{equation}
are cofibrations in $\cat{mc}$ for all faces $\sigma$. Thus
$D\colon\cat{cat}(K)\to\cat{mc}$ is cofibrant when every canonical map
$\colim D(\partial(\sigma))\rightarrow D(\sigma)$ is a cofibration.

In the dual model structure on $\fcat{cat$^{op}(K)$}{mc}$, weak
equivalences and cofibrations are given objectwise. To describe the
fibrations, we restrict $C$ and $D$ to the undercategories
$\cat{cat}^{op}(\partial(\sigma))$, and write $M_\sigma C$ and
$M_\sigma D$ for their respective limits. So $M_\sigma$ is the
\emph{matching functor} of \cite{hove99}, and $f\colon C\rightarrow D$
is a fibration precisely when the induced maps
\begin{equation}\label{diagf}
C(\sigma)\longrightarrow D(\sigma)\times_{M_\sigma D}M_\sigma C
\end{equation}
are fibrations in $\cat{mc}$ for all faces $\sigma$. Thus
$C\colon\cat{cat}^{op}(K)\to\cat{mc}$ is fibrant when every canonical
map $C(\sigma)\rightarrow\lim C(\partial(\sigma))$ is a fibration.

The axioms for a model category are actually self-dual, in the sense
that any general statement concerning fibrations, cofibrations,
limits, and colimits is equivalent to the statement in which they are
replaced by cofibrations, fibrations, colimits, and limits
respectively. In particular, $\cat{mc}^{op}$ always admits a dual
model structure.

%
%
%
%
%
%
%
%
%
%%%%%%%%%%%%%%%%%%%%%%%%%%%%%%%%%%%%%%%%%%%%%%%%%%%%%%%%%%%%%%%%%%%%%%%%

\section{Algebraic model categories}\label{almoca}

In this section we give further details of the algebraic model
categories of \ref{algcats}. We describe the fibrations and
cofibrations in each category, comment on the status of the
strengthened axioms, and give simple examples in less familiar cases.
We also discuss two important adjoint pairs.

\subsection{Chain and cochain complexes}\label{chcoch}
The existence of a model structure on categories of chain complexes
was first proposed by Quillen \cite{quil67}, whose view of homological
algebra as homotopy theory in $\cat{ch}_R$ was a crucial insight.
Variations involving bounded and unbounded complexes are studied by
Hovey \cite{hove99}, for example. In $\cat{ch}_R$, we assume that the
fibrations are epimorphic in positive degrees and the cofibrations are
monomorphic with degree-wise projective cokernel \cite{dw-sp95}. In
particular, every object is fibrant.

The existence of limits and colimits is assured by working
dimensionwise, and functoriality of the factorisations \eqref{mcfacts}
follows automatically from the fact that $\cat{ch}_R$ is
\emph{cofibrantly generated} \cite[Chapter 2]{hove99}.

Tensor product of chain complexes invests $\cat{ch}_R$ with the
structure of a monoidal model category, as defined by Schwede and
Shipley \cite{sc-sh00}. The Dold-Kan correspondence confirms that the
normalisation functor $N\colon\cat{smod}_R\rightarrow\cat{ch}_R$ is an
equivalence of categories \cite[\S3]{go-ja99},
%\cite[\S4]{go-sc06},(see~\cite[\S3]{go-ja99}, for example),
and one of a Quillen pair.
%$(N,\varGamma)$.
%a Quillen pair between the category of simplicial $R$-modules
%and $\cat{ch}_R$, and an equivalence of homotopy categories.
Being a \emph{simplicial model category} \cite{hirs03}, $\cat{smod}_R$
is more amenable to various homotopy theoretic constructions.

Model structures on $\cat{coch}_R$ are established by analogous
techniques. It is usual to assume that the fibrations are epimorphic
with degree-wise injective kernel, and the cofibrations are
monomorphic in positive degrees. Then every object is cofibrant. There
is an alternative structure based on projectives, but we shall only
refer to the rational case so we ignore the distinction. Tensor
product turns $\cat{coch}_R$ into a monoidal model category.

\subsection{Commutative differential graded algebras}\label{cdga}
We consider commutative differential graded algebras over $\Q$ with
cohomology differentials, so they are commutative monoids in
$\cat{coch}$. A model structure on $\cat{cdga}$ was first defined in
this context by Bousfield and Gugenheim \cite{bo-gu76}, and has played
a significant role in the theoretical development of rational homotopy
theory ever since. The fibrations are epimorphic, and the cofibrations
are determined by the appropriate lifting property; some care is
required to identify sufficiently many explicit cofibrations.

Limits in $\cat{cdga}$ are created in the underlying category
$\cat{coch}$ and endowed with the natural algebra structure, whereas
colimits exist because $\cat{cdga}$ has finite coproducts and filtered
colimits. The proof of the factorisation axioms in \cite{bo-gu76} is
already functorial.

By way of example, we note that the product of algebras $A$ and $B$ is
their augmented sum $A\oplus B$, defined by pulling back the diagram
of augmentations
\[
A\stackrel{\varepsilon_A}{\lllongrightarrow}\Q
\stackrel{\varepsilon_B}{\lllongleftarrow}B
\]
in $\cat{coch}$ and imposing the standard multiplication on the
result. The coproduct is their tensor product $A\otimes B$ over
$\Q$. Examples of cofibrations include extensions of the form
$A\to(A\otimes S(w),d)$, determined by cocycles $z$ in $A$; such
an extension is defined by pushing out the diagram
\begin{equation}\label{frexcdga}
A\stackrel{h}{\llongleftarrow}S(x)\stackrel{j}\llongrightarrow
S(w,dw),
\end{equation}
where $h(x)=z$ and $j(x)=dw$. So the differential on $A\otimes S(w)$
is given by
\[
  d(a\otimes 1)=d_Aa\otimes 1\sandss d(1\otimes w)=z\otimes 1.
\]
This illustrates the fact that the pushout of a cofibration is a
cofibration. A larger class of cofibrations $A\rightarrow A\otimes
S(W)$ is given by iteration, for any set $W$ of positive
dimensional generators corresponding to cocycles in $A$.

The factorisations \eqref{mcfacts} are only valid over fields of
characteristic $0$, so the model structure does not extend to
$\cat{cdga}_R$ for arbitrary rings $R$.

\subsection{Differential graded algebras}\label{dga}
Our differential graded algebras have homology differentials, and are
the monoids in $\cat{ch}_R$. A model category structure in
$\cat{dga}_R$ is therefore induced by applying Quillen's path object
argument, as in \cite{sc-sh00}; a similar structure was first proposed
by Jardine, \cite{jard97} (albeit with cohomology differentials), who
proceeds by modifying the methods of \cite{bo-gu76}. Fibrations are
epimorphisms, and cofibrations are determined by the appropriate
lifting property.

Limits are created in $\cat{ch}_R$, whereas colimits exist because
$\cat{dga}_R$ has finite coproducts and filtered colimits.
Functoriality of the factorisations follows by adapting the proofs of
\cite{bo-gu76}, and works over arbitrary $R$.

For example, the coproduct of algebras $A$ and $B$ is the free
product \mbox{$A\star B$}, formed by factoring out an appropriate
differential graded ideal \cite{jard97} from the free tensor
algebra $T_R(A\otimes B)$ on the chain complex $A\otimes B$.
Examples of cofibrations include the extensions $A\to(A\star
T_R(w),d)$, determined by cycles $z$ in $A$. By analogy with the
commutative case, such an extension is defined by pushing out the
diagram
\begin{equation}\label{frexdga}
A\stackrel{h}{\llongleftarrow}T_R(x)\stackrel{j}\llongrightarrow
T_R(w,dw),
\end{equation}
where $h(x)=z$ and $j(x)=dw$. The differential on $A\star T_R(w)$
is given by
\[
d(a\star 1)=d_Aa\star 1\sands d(1\star w)=z\star 1.
\]
Further cofibrations $A\rightarrow A\mathbin{\star}T_R(W)$ arise
by iteration, for any set $W$ of positive dimensional generators
corresponding to cycles in $A$.

\subsection{Cocommutative differential graded coalgebras}\label{cdgc}
The cocommutative co\-mo\-no\-ids in $\cat{ch}_R$ are the objects
of $\cat{cdgc}_R$, and the morphisms preserve comultiplication.
The model structure is defined only over fields of characteristic
$0$; in view of our applications, we shall restrict attention to
the case $\Q$. In practice, we interpret $\cat{cdgc}$ as the full
subcategory $\cat{cdgc}_0$ of connected objects $C$, which are
necessarily supplemented. Model structure was first defined on the
category $\cat{cdgc}_1$ of simply connected rational cocommutative
coalgebras by Quillen \cite{quil69}, and refined to $\cat{cdgc}_0$
by Neisendorfer \cite{neis78}. The cofibrations are monomorphisms,
and the fibrations are determined by the appropriate lifting
property.

Limits exist because $\cat{cdgc}$ has finite products and
filtered limits, whereas colimits are created in $\cat{ch}$, and
endowed with the natural coalgebra structure. Functoriality of the
factorisations again follows by adapting the proofs of
\cite{bo-gu76}.

For example, the product of coalgebras $C$ and $D$ is their tensor
product $C\otimes D$ over $\Q$. The coproduct is their supplemented
sum, given by pushing out the diagram of supplementations
\[
C\stackrel{\delta_C}{\lllongleftarrow}\Q
\stackrel{\delta_D}{\lllongrightarrow}D
\]
in $\cat{ch}$ and imposing the standard comultiplication on the
result. Examples of fibrations include the projections $(C\otimes
S\br{dw},d)\rightarrow C$, which are determined by cycles $z$ in $C$
and obtained by pulling back diagrams
\[
  C\stackrel{h}{\llongrightarrow} S\br{x}\stackrel{p}\llongleftarrow
  S\br{w,dw},
\]
where $p(w)=x$,\, $p(dw)=0$ and $h(z)=x$. The differential on
$C\otimes S\br{dw}$ satisfies
\[
  d(z\otimes 1)=1\otimes dw \sands d(1\otimes dw)=0.
\]
This illustrates the fact that the pullback of a fibration is a
fibration. Further fibrations $C\otimes S\br{dW}\to C$ are given by
iteration, for any set $W$ of generators corresponding to elements
of degree $\geq 2$ in $C$.

\subsection{Differential graded coalgebras}\label{dgc}

Model structures on more general categories of differential graded
coalgebras have been publicised by Getzler and Goerss \cite{ge-go99},
who also work over a field. Once more, we restrict attention to
$\Q$. The objects of $\cat{dgc}$ are comonoids in $\cat{ch}$, and the
morphisms preserve comultiplication. The cofibrations are
monomorphisms, and the fibrations are determined by the appropriate
lifting property.

Limits exist because $\cat{dgc}$ has finite products and filtered
limits, and colimits are created in $\cat{ch}$. Functoriality of
factorisations follows from the fact that the model structure is
cofibrantly generated.

For example, the product of coalgebras $C$ and $D$ is the cofree
product ${C\star D}$ \cite{ge-go99}. Their coproduct is the
supplemented sum, as in the case of $\cat{cdgc}$. Examples of
fibrations include the projections $(C\star T\br{dw},d)\rightarrow C$,
which are determined by cycles $z$ in $C$ and obtained by pulling back
diagrams
\[
  C\stackrel{h}{\llongrightarrow} T\br{x}\stackrel{p}\llongleftarrow
  T\br{w,dw},
\]
where $p(w)=x$,\, $p(dw)=0$ and $h(z)=x$. The differential on
$C\star T\br{dw}$ satisfies
\[
  d(z\star 1)=1\star dw \sands d(1\star dw)=0.
\]

\subsection{Differential graded Lie algebras}\label{dgl}

For our purposes here, a differential graded Lie algebra $L$ is a
chain complex in $\cat{ch}$, equipped with a bracket morphism
$L\otimes L\rightarrow L$ satisfying signed versions of the
antisymmetry and Jacobi identities. Differential graded Lie algebras
over $\Q$ are the objects of $\cat{dgl}$, and their homomorphisms are
the morphisms. Quillen \cite{quil69} originally defined a model
structure on the subcategory $\cat{dgl}_1$ of {\it reduced\/} objects,
which was extended to $\cat{dgl}$ by Neisendorfer
\cite{neis78}. Fibrations are epimorphisms, and cofibrations are
determined by the appropriate lifting property.

Limits are created in $\cat{ch}$, whereas colimits exist because
$\cat{dgl}$ has finite coproducts and filtered colimits.
Functoriality of the factorisations follows by adapting the proofs
of~\cite{neis78}.

For example, the product of Lie algebras $L$ and $M$ is their product
$L\oplus M$ as chain complexes, with the induced bracket
structure. Their coproduct is the free product $L\star M$, obtained by
factoring out an appropriate differential graded ideal from the free
Lie algebra $\fl(L\otimes M)$ on the chain complex $L\otimes
M$. Examples of cofibrations include the extensions
$L\to(L\star\fl(w),d)$, which are determined by cycles $z$ in $L$ and
obtained by pushing out diagrams
\[
  L\stackrel{h}{\llongleftarrow}\fl(x)\stackrel{j}
  \llongrightarrow\fl(w,dw),
\]
where $h(x)=z$ and $j(x)=dw$. The differential on $L\star F(w)$ is
given by
\[
  d(l\star 1)=d_Ll\star 1\sands d(1\star w)=z\star 1.
\]
For historical reasons, a differential graded Lie algebra $L$ is said
to be \emph{coformal} whenever it is formal in $\cat{dgl}$.

\subsection{Adjoint pairs}\label{adpa}
Following Moore \cite{moor71}, \cite{h-m-s74}, we consider the
\emph{classifying functor} $B_*$ and the \emph{loop functor}
$\varOmega_*$ as an adjoint pair
\begin{equation}\label{barcobar}
  \varOmega_*\colon\cat{dgc}_{0,R}\rla\cat{dga}_R:\! B_*.
\end{equation}
For any object $A$ of $\cat{dga}_R$, the classifying coalgebra $B_*A$
agrees with Eilenberg and Mac Lane's normalised bar construction as
objects of $\cat{ch}_R$. For any object $C$ of $\cat{dgc}_{0,R}$, the
loop algebra $\varOmega_*C$ is given by the tensor algebra
$T_R(s^{-1}\oline{C})$ on the desuspended $R$-module
$\oline{C}=\Ker(\varepsilon\colon C\to R)$, and agrees with Adams's
cobar construction \cite{adam56} as objects of $\cat{ch}_R$. The
graded homology algebra $H(\varOmega_*C)$ is denoted by
$\mathop{\mathrm{Cotor}}_C(R,R)$, and is isomorphic to the Pontrjagin
ring $H_*(\varOmega X;R)$ when $C$ is the singular chain complex of a
reduced CW-complex $X$.

Over $\Q$, the adjunction maps $C\mapsto B_*\varOmega_*C$ and
$\varOmega_*B_*A\mapsto A$ are quasi-\-iso\-mor\-phisms for every
object $A$ and $C$. We explore the consequence of this fact
further in Section \ref{mlsak}. In particular, there is an
isomorphism
\begin{equation}\label{cotorext}
\mathop{\mathrm{Cotor}}\nolimits_C(\Q,\Q)
\;\cong\;\Ext_{C^*}(\Q,\Q)
\end{equation}
of graded algebras \cite[page~41]{prid70}, where $C^*$ is the graded
algebra dual to $C$ and $\Ext_{C^*}(\Q,\Q)$ is the rational
\emph{Yoneda algebra} of $C^*$~\cite{macl63}.

Following Neisendorfer ~\cite[Proposition 7.2]{neis78}, we consider a
second pair of adjoint functors
\begin{equation}\label{lcadj}
L_*\colon\cat{cdgc}_0\rla\cat{dgl}:\! M_*,
\end{equation}
whose derived functors induce an equivalence between
$\Ho(\cat{cdgc}_0)$ and a certan full subcategory of $\Ho(\cat{dgl})$.
This extends Quillen's original results \cite{quil69} for $L_*$ and
$M_*$, which apply only to simply connected coalgebras and connected
Lie algebras. Given a connected cocommutative coalgebra $C$, the
underlying graded Lie algebra of $L_*C$ is the free Lie algebra
$\fl(s^{-1}\oline{C})\subset T(s^{-1}\oline{C})$. This is preserved by
the differential in $\varOmega_*C$ because $C$ is cocommutative,
thereby identifying $L_*C$ as the differential graded Lie algebra of
primitives in $\varOmega_*C$. The right adjoint functor $M_*$ may be
regarded as a generalisation to differential graded objects of the
standard complex for calculating the cohomology of Lie algebras.
Given any $L$ in $\cat{dgl}$ the underlying cocommutative coalgebra of
$M_*L$ is the symmetric coalgebra $C(sL)$ on the suspended vector
space $L$.

%
%
%
%
%
%
%
%
%
%%%%%%%%%%%%%%%%%%%%%%%%%%%%%%%%%%%%%%%%%%%%%%%%%%%%%%%%%%%%%%%%%%%%%%%%

\section{Homotopy limits and colimits}\label{holico}

The $\lim$ and $\colim$ functors $\fcat{a}{mc}\to\cat{mc}$ do not
generally preserve weak equivalences, and the theory of homotopy
limits and colimits has been developed to remedy this deficiency. We
outline their construction in this section, and discuss basic
properties. The literature is still in a state of considerable flux,
and we refer to Recke's thesis \cite{reck} for a comparison of several
alternative treatments. Here we focus mainly on those of Recke's
statements that are inspired by Hirschhorn, and make detailed appeal
to \cite{hirs03} as necessary.

With $\cat{cat}(K)$ and $\cat{cat}^{op}(K)$ in mind as primary
examples, we assume throughout that $\cat{a}$ is a finite Reedy
category. One additional property is especially useful.
\begin{definition}\label{deffcfc}
A Reedy category $\cat{a}$ has \emph{cofibrant constants} if the
constant $\cat{a}$-diagram $M$ is Reedy cofibrant in $\fcat{a}{mc}$,
for any cofibrant object $M$ of an arbitrary model category
$\cat{mc}$. Similarly, $\cat{a}$ has \emph{fibrant constants} if the
constant $\cat{a}$-diagram $N$ is Reedy fibrant for any fibrant object
$N$ of $\cat{mc}$.
\end{definition}

Note that the initial and terminal objects of $\fcat{a}{mc}$ are the
constant diagrams $\circ$ and $*$ respectively.

\begin{lemma}\label{catkrcf}
The Reedy categories $\cat{cat}(K)$ and $\cat{cat}^{op}(K)$ have
fibrant and cofibrant constants, for every simplicial complex $K$.
\end{lemma}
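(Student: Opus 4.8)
The plan is to verify the defining condition of Definition~\ref{deffcfc} directly, using the explicit description of Reedy (co)fibrations in $\fcat{cat$(K)$}{mc}$ and $\fcat{cat$^{op}(K)$}{mc}$ given at the end of Section~\ref{mocadi}. Recall that $\cat{cat}(K)$ has degree function $\sigma\mapsto|\sigma|-1$, with $\cat{cat}_+(K)=\cat{cat}(K)$ and $\cat{cat}_-(K)$ trivial; dually $\cat{cat}^{op}_-(K)=\cat{cat}^{op}(K)$ and $\cat{cat}^{op}_+(K)$ is trivial. Since $\cat{cat}(K)$ has no nonidentity degree-lowering morphisms, the matching category of any face is empty and every constant $\cat{cat}(K)$-diagram is automatically Reedy fibrant; dually every constant $\cat{cat}^{op}(K)$-diagram is automatically Reedy cofibrant. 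So $\cat{cat}(K)$ has fibrant constants and $\cat{cat}^{op}(K)$ has cofibrant constants for trivial reasons, and the content of the lemma is the other half of each statement: $\cat{cat}(K)$ has cofibrant constants, and dually $\cat{cat}^{op}(K)$ has fibrant constants. By the self-duality of the model-category axioms noted at the end of Section~\ref{mocadi}, the second assertion follows from the first applied to $\cat{mc}^{op}$, so it suffices to treat $\cat{cat}(K)$.

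For this, let $M$ be a cofibrant object of an arbitrary model category $\cat{mc}$, and write $M$ also for the constant diagram. By \eqref{diagcof}, $M$ is Reedy cofibrant precisely when, for every face $\sigma$, the canonical map $\colim M(\partial(\sigma))\to M(\sigma)=M$ is a cofibration, where the colimit is taken over $\cat{cat}(K)\!\Downarrow\!\sigma=\cat{cat}(\partial(\sigma))$. The key point is that this latching object is the colimit of the \emph{constant} diagram with value $M$ over the category $\cat{cat}(\partial(\sigma))$, i.e.\ the tensor $N_\sigma\cdot M$ (copower) where $N_\sigma$ is the set of connected components of $\cat{cat}(\partial(\sigma))$, or more precisely the colimit $\colim_{\cat{cat}(\partial(\sigma))}\kappa M$. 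When $\sigma=\varnothing$ or $\sigma$ is a vertex, $\partial(\sigma)$ contains only the empty face (or is empty), so the latching object is the initial object $\circ$ and the map $\circ\to M$ is a cofibration because $M$ is cofibrant. For $|\sigma|\geq 2$, the poset $\cat{cat}(\partial(\sigma))$ of proper faces of $\sigma$ is nonempty and connected (it has a cone point, the empty face, and in fact any two faces are joined through $\varnothing$), so $\colim_{\cat{cat}(\partial(\sigma))}\kappa M\cong M$ and the canonical map to $M(\sigma)=M$ is the identity, hence a cofibration. Thus every latching map is a cofibration and $M$ is Reedy cofibrant, as required.

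The only delicate point — and the step I expect to be the main obstacle — is the claim that the colimit of a constant diagram over a nonempty connected category is the value itself, together with the identification of the latching map with the identity. This is standard (a constant diagram over a connected indexing category has colimit equal to its common value, since the colimit is a quotient of the coproduct over objects by relations forcing all copies to agree and connectivity forces a single equivalence class), but one must be careful that $\cat{cat}(\partial(\sigma))$ is genuinely connected — which holds because $\varnothing$ is an object below every face — and that "nonempty" is handled separately in the low-dimensional cases where $\partial(\sigma)=\cat{cat}(\varnothing)$ consists of the empty face alone, or $\partial(\varnothing)$ is empty, giving latching object $\circ$. Once these bookkeeping cases are dispatched, the argument is immediate. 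I would write the $\cat{cat}(K)$ case in full and then invoke duality for $\cat{cat}^{op}(K)$, remarking that one could equally run the dual argument with \eqref{diagf} and matching objects $\lim M(\partial(\sigma))$ directly.
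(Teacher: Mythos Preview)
Your approach is correct and essentially coincides with the paper's: both rest on the observation that the latching categories $\cat{cat}(\partial(\sigma))=\cat{cat}_+(K)\!\Downarrow\!\sigma$ are either empty (when $\sigma=\varnothing$) or have initial object $\varnothing$ and are therefore connected, while the matching categories are all empty. The paper simply cites Hirschhorn's criterion \cite[Proposition~15.10.2]{hirs03} for fibrant and cofibrant constants, whereas you unpack that criterion by hand, computing the latching objects of the constant diagram explicitly; the underlying idea is identical.

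One small slip to repair: when $\sigma$ is a vertex, $\cat{cat}(\partial(\sigma))$ is the one-object category $\{\varnothing\}$, so the colimit of the constant diagram $M$ over it is $M$, not the initial object $\circ$. The latching map is then the identity $M\to M$, which is certainly a cofibration. In other words, the vertex case already falls under your ``nonempty connected'' argument (since $\{\varnothing\}$ has an initial object), and only the case $\sigma=\varnothing$ genuinely yields latching object $\circ$.
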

\begin{proof}
  Consider $\cat{cat}(K)$, and any face $\s$. Then
  $\sigma\!\Downarrow\!\cat{cat}_-(K)$ is empty, since no morphism
  lowers degree, whereas $\cat{cat}_+(K)\!\Downarrow\!\s$ has initial
  object $\varnothing$, and is therefore connected. So $\cat{cat}(K)$
  satisfies the criteria of \cite[Proposition 15.10.2]{hirs03} for
  fibrant and cofibrant constants. The result for $\cat{cat}^{op}(K)$
  follows by duality.
\end{proof}

As shown in \cite[Theorem 15.10.8]{hirs03}, a Reedy category
$\cat{a}$ has fibrant and cofibrant constants if and only if the
adjoint functors
\begin{equation}\label{quilpairs}
\kappa\colon\cat{mc}\rla\fcat{a}{mc}:\!\lim\spandsp
\colim\colon\fcat{a}{mc}\rla\cat{mc}:\!\kappa
\end{equation}
are Quillen pairs, for every model category $\cat{mc}$. The proof
addresses the equivalent statement that $\kappa$ is both left and
right Quillen. In these circumstances, it follows from Ken Brown's
Lemma that $\lim$ and $\colim$ preserve weak equivalences on Reedy
fibrant and Reedy cofibrant diagrams respectively.

We now apply the fibrant and cofibrant replacement functors associated
to the Reedy model structure on $\fcat{a}{mc}$, and their homotopy
functors \eqref{hoomfi}.
\begin{definition}\label{defhocolhol}
For any Reedy category $\cat{a}$ with fibrant and cofibrant
constants, and any model category $\cat{mc}$:
\begin{enumerate}
\item[(1)] the \emph{homotopy colimit} functor is the composition
\[
\hocolim\colon\Ho\fcat{a}{mc}
\stackrel{\Ho(\omega)}{\llllongrightarrow}
\Ho\fcat{a}{mc}_c
\stackrel{\Ho(\colim)}{\lllllongrightarrow}\Ho(\cat{mc})\;;
\]
\item[(2)] the \emph{homotopy limit} functor is the composition
\[
\holim\colon\Ho\fcat{a}{mc}
\stackrel{\Ho(\phi)}{\llllongrightarrow}
\Ho\fcat{a}{mc}_{\!f}
\stackrel{\Ho(\lim)}{\lllllongrightarrow}\Ho(\cat{mc}).
\]
\end{enumerate}
\end{definition}

Lemma \ref{catkrcf} confirms that $\holim$ and
$\hocolim\colon\Ho\fcat{cat$(K)$}{mc}\rightarrow\Ho(\cat{mc})$ are
defined for any simplicial complex $K$; and similarly for
$\cat{cat}^{op}(K)$.

\begin{remark}\label{htpyinvce}
Definition \ref{defhocolhol} incorporates the fact that $\holim$ and
$\hocolim$ map objectwise weak equivalences of diagrams to weak
equivalences in $\cat{mc}$.
\end{remark}

Describing explicit models for homotopy limits and colimits has been a
major objective for homotopy theorists since their study was initiated
by Bousfield and Kan \cite{bo-ka72}. In terms of Definition
\ref{defhocolhol}, the issue is to choose fibrant and cofibrant
replacement functors $\phi$ and $\omega$. Many alternatives exist,
including those defined by the two-sided bar and cobar constructions
of \cite{p-r-v04} or the frames of \cite[\S16.6]{hirs03}, but no
single description yet appears to be convenient in all cases. Instead,
we accept a variety of possibilities, which are often implicit; the
next few results ensure that they are as compatible and well-behaved
as we need.
\begin{proposition}\label{hclhlpreswe}
Any Reedy cofibrant approximation $D'\to D$ of diagrams induces a weak
equivalence $\colim D'\to\hocolim D$ in $\cat{mc}$; and any Reedy
fibrant approximation $D\to D''$ induces a weak equivalence $\holim
D\to\lim D''$.
\end{proposition}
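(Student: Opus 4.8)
The plan is to derive both halves of the statement from the defining properties of the replacement functors $\omega$ and $\phi$ together with Ken Brown's Lemma, applied to the Quillen pairs \eqref{quilpairs}. I would treat the colimit assertion in full and obtain the limit assertion by the self-duality of the model category axioms noted at the close of Section~\ref{mocadi}.

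First I would recall that the Reedy cofibrant replacement $\omega D$ carries a natural acyclic fibration $\omega D\to D$, and that $\omega D$ is Reedy cofibrant, so $\circ\to\omega D$ is a cofibration. Given a Reedy cofibrant approximation $g\colon D'\to D$, the diagram $D'$ is Reedy cofibrant as well, so $\circ\to D'$ is a cofibration. The lifting axiom, applied to the square with left edge $\circ\to D'$ and right edge $\omega D\to D$, then produces a morphism $j\colon D'\to\omega D$ with $(\omega D\to D)\circ j=g$. Since $g$ and $\omega D\to D$ are both weak equivalences, the two-out-of-three axiom shows $j$ is a weak equivalence between Reedy cofibrant diagrams. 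Because $\cat{a}$ has cofibrant constants --- which is precisely the hypothesis under which $\hocolim$ is defined in Definition~\ref{defhocolhol} --- the pair $(\colim,\kappa)$ of \eqref{quilpairs} is a Quillen pair, so $\colim$ is left Quillen; Ken Brown's Lemma \cite[Lemma 7.7.1]{hirs03} then gives that $\colim j\colon\colim D'\to\colim\omega D$ is a weak equivalence in $\cat{mc}$. Since $\colim\omega D$ represents $\hocolim D$ by Definition~\ref{defhocolhol}, this is the asserted weak equivalence $\colim D'\to\hocolim D$.

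For the limit statement I would argue dually. Here $\phi D$ is Reedy fibrant with a natural acyclic cofibration $D\to\phi D$, and given a Reedy fibrant approximation $D\to D''$, lifting in the square with left edge $D\to\phi D$ and right edge $D''\to *$ yields $q\colon\phi D\to D''$ compatible with the approximations, which is a weak equivalence between Reedy fibrant diagrams by two-out-of-three. Since $\cat{a}$ has fibrant constants, $\lim$ is right Quillen, so Ken Brown's Lemma shows $\lim q\colon\holim D=\lim\phi D\to\lim D''$ is a weak equivalence.

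The only point requiring real care is the bookkeeping: the lift $j$ (resp.\ $q$) is not unique, so I would check that any two choices are left (resp.\ right) homotopic over $D$, hence induce the same morphism in $\Ho(\cat{mc})$, and that this morphism coincides with the canonical comparison arising from the definition of $\hocolim$ (resp.\ $\holim$). I expect this compatibility check, rather than the weak-equivalence claim itself, to be the main obstacle; the remainder reduces to the lifting axioms, two-out-of-three, and Ken Brown's Lemma.
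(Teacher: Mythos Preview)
Your proposal is correct and follows essentially the same route as the paper: lift the cofibrant approximation through the acyclic fibration $\omega D\to D$, apply two-out-of-three to see the lift is a weak equivalence between Reedy cofibrant diagrams, and then use that $\colim$ is left Quillen (via Ken Brown) to push this down to $\cat{mc}$; the limit case is dual. The uniqueness bookkeeping you flag as the main obstacle is not actually needed for the bare weak-equivalence claim, and the paper relegates it to a separate remark rather than including it in the proof.
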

\begin{proof}
  A cofibrant approximation $i\colon D'\to D$ factorises as
  $D'\to\omega(D)\to D$ by the lifting axiom in $\fcat{a}{mc}$, and
  the left hand map is a weak equivalence by the $2$ out of $3$ axiom.
  But $D'$ and $\omega(D)$ are cofibrant, and $\colim$ is left
  Quillen, so the induced map $\colim D'\to\colim\omega(D)$ is a weak
  equivalence, as required. The proof for $\lim$ is dual.
\end{proof}
\begin{remark}\label{unirepl}
Such arguments may be strengthened to include uniqueness statements,
and show that the replacements $\phi(D)$ and $\omega(D)$ are
themselves unique up to homotopy equivalence over $D$
\cite[Proposition 8.1.8]{hirs03}.
\end{remark}
\begin{proposition}\label{hotolim}
For any Reedy cofibrant diagram $D$ and fibrant diagram $E$, there are
natural weak quivalences $\hocolim D\to\colim D$ and $\lim E\to\holim
E$.
\end{proposition}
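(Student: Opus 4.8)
\emph{Proof proposal.} The plan is to unwind Definition \ref{defhocolhol} at the level of objects. First I would recall that, for a Reedy cofibrant replacement functor $\omega$ on $\fcat{a}{mc}$, the object $\hocolim D$ of $\Ho(\cat{mc})$ is computed as $\colim\omega(D)$, and that $\omega$ comes equipped with a natural acyclic fibration $\omega(D)\to D$ of diagrams. Its source $\omega(D)$ is Reedy cofibrant by construction, and its target $D$ is Reedy cofibrant by hypothesis. Applying the functor $\colim$ therefore yields a map $\colim\omega(D)\to\colim D$ in $\cat{mc}$, natural in $D$, which (after localisation) represents the asserted morphism $\hocolim D\to\colim D$ in $\Ho(\cat{mc})$; note the direction is forced by the direction $\omega(D)\to D$ of the replacement map.

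Next I would invoke the fact recorded just before Definition \ref{defhocolhol}, namely that since $\cat{a}$ has cofibrant constants (which holds for $\cat{cat}(K)$ and $\cat{cat}^{op}(K)$ by Lemma \ref{catkrcf}, and is a standing hypothesis here), the adjunction \eqref{quilpairs} is a Quillen pair, so that $\colim$ preserves weak equivalences between Reedy cofibrant diagrams by Ken Brown's Lemma. Since $\omega(D)\to D$ is such a weak equivalence, its image $\colim\omega(D)\to\colim D$ is a weak equivalence in $\cat{mc}$, and naturality is inherited from the naturality of $\omega(D)\to D$ and the functoriality of $\colim$. This establishes the first half of the statement; alternatively, one may read it off from Proposition \ref{hclhlpreswe} applied to the identity cofibrant approximation $D\to D$, at the cost of extracting a representative in the opposite direction.

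The second half follows by the self-duality of the model category axioms, applied in $\cat{mc}^{op}$. Dually, $\holim E$ is computed as $\lim\phi(E)$ for a Reedy fibrant replacement functor $\phi$, which carries a natural acyclic cofibration $E\to\phi(E)$ with Reedy fibrant target; when $E$ is itself Reedy fibrant this is a weak equivalence between Reedy fibrant diagrams, and $\lim$ (right Quillen, again by \eqref{quilpairs}) preserves such weak equivalences, so $\lim E\to\lim\phi(E)=\holim E$ is a natural weak equivalence in $\cat{mc}$.

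I do not expect a genuine obstacle: the content is bookkeeping around Definition \ref{defhocolhol} and the Quillen pairs \eqref{quilpairs}. The only points deserving a little care are getting the direction of the comparison map right, and checking that it is well defined and natural in $\Ho(\cat{mc})$ rather than merely in $\cat{mc}$ — which is legitimate because $\omega$ (respectively $\phi$) preserves all weak equivalences, so the construction descends to homotopy categories, and because on Reedy cofibrant (respectively fibrant) diagrams $\colim$ (respectively $\lim$) already descends to $\Ho(\colim)$ (respectively $\Ho(\lim)$).
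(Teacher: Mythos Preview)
Your proposal is correct and follows essentially the same approach as the paper: apply $\colim$ to the acyclic fibration $\omega(D)\to D$, observe that both source and target are Reedy cofibrant, and invoke that $\colim$ is left Quillen (hence preserves weak equivalences between cofibrant objects via Ken Brown's Lemma), with the dual argument for $E$. The paper compresses this into a single sentence, but the content is identical.
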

\begin{proof}
For $D$, it suffices to apply the left Quillen functor $\colim$ to the
acyclic fibration $\omega(D)\to D$. The proof for $E$ is dual.
\end{proof}
\begin{proposition}\label{hopu}
  In any model category $\cat{mc}$:
\begin{enumerate}
\item[(1)] if all three objects of a pushout diagram $D\letbe
  L\leftarrow M\to N$ are cofibrant, and either of the maps is a
  cofibration, then there exists a weak equivalence $\hocolim
  D\to\colim D$;
\item[(2)] if all three objects of a pullback diagram $E\letbe P\to
  Q\leftarrow R$ are fibrant, and either of the maps is a fibration,
  then there exists a weak equivalence $\holim E\to\lim E$.
\end{enumerate}
\end{proposition}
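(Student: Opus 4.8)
The plan is to reduce the two statements to Proposition~\ref{hotolim} by observing that a pushout (respectively pullback) diagram of the indicated type is already Reedy cofibrant (respectively Reedy fibrant) as a $\cat{cat}(K)$-diagram, where $K$ is the appropriate small complex. Indeed, a pushout diagram $L\leftarrow M\to N$ is a $\cat{cat}(K)$-diagram for $K$ the complex with two vertices and no edge, so that $\cat{cat}(K)$ has minimal face $\varnothing\mapsto M$ and two maximal faces $\mapsto L,N$; a pullback diagram $P\to Q\leftarrow R$ is a $\cat{cat}^{op}(K)$-diagram for the same $K$. So the first task is to unwind the Reedy cofibrancy criterion \eqref{diagcof} in this tiny example.

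First I would recall that, by the discussion following \eqref{diagcof}, a diagram $D\colon\cat{cat}(K)\to\cat{mc}$ is Reedy cofibrant exactly when each canonical map $\colim D(\partial(\sigma))\to D(\sigma)$ is a cofibration in $\cat{mc}$. For our two-vertex complex $K$ the faces are $\varnothing$ and the two singletons $\{1\},\{2\}$. For $\sigma=\varnothing$ the boundary is empty, its colimit is the initial object $\circ$, and the condition becomes: $M$ is cofibrant. For $\sigma=\{i\}$ the boundary $\partial(\sigma)$ is again $\{\varnothing\}$, its colimit is $D(\varnothing)=M$, and the condition becomes: the structure map $M\to L$ (respectively $M\to N$) is a cofibration. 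Thus a pushout diagram is Reedy cofibrant precisely when $M$, $L$, $N$ are all cofibrant \emph{and} \emph{both} legs are cofibrations. That is slightly stronger than the hypothesis of part~(1), which only asks that \emph{one} leg be a cofibration, so the second task is to bridge that gap.

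The bridge is supplied by the standard fact that in a model category a pushout of a cofibration between cofibrant objects is again a cofibration. Concretely, suppose in $D\letbe L\leftarrow M\to N$ the map $M\to N$ is a cofibration; form the pushout $\colim D$, and note that the map $L\to\colim D$ is the pushout of $M\to N$ along $M\to L$, hence a cofibration, and $\colim D$ is cofibrant since it receives a cofibration from the cofibrant object $L$. Now replace $D$ by the diagram $D'$ with the same objects but whose legs are $L\xleftarrow{\mathrm{id}} L$... — more to the point, I would instead argue directly that $D$ itself, though possibly not Reedy cofibrant, admits a Reedy cofibrant approximation obtained by factoring the non-cofibration leg $M\to L$ as $M\rightarrowtail L'\xrightarrow{\sim} L$ through a cofibration followed by an acyclic fibration (using the functorial factorisation \eqref{mcfacts}), leaving the already-cofibration leg $M\to N$ untouched. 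The resulting $D'\to D$ is an objectwise weak equivalence with $D'$ Reedy cofibrant by the criterion above, so Proposition~\ref{hclhlpreswe} gives a weak equivalence $\colim D'\to\hocolim D$, while $\colim D'\to\colim D$ is a weak equivalence because it is the pushout of the acyclic fibration $L'\xrightarrow{\sim} L$ along the cofibration $M\rightarrowtail N$ between cofibrant objects (left properness, which holds in any model category for such pushouts; alternatively invoke Proposition~\ref{hotolim} applied to $D'$ together with the fact that $\hocolim$ is homotopy invariant, Remark~\ref{htpyinvce}). Composing yields the desired $\hocolim D\to\colim D$. The proof of part~(2) is formally dual, using \eqref{diagf}, the stability of fibrations under pullback along maps between fibrant objects, and the dual half of Proposition~\ref{hclhlpreswe}.

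The main obstacle is the mismatch between ``one leg a cofibration'' in the hypothesis and ``both legs cofibrations'' in the Reedy criterion: one must produce an honest Reedy cofibrant replacement that changes only the offending leg and then check that passing to the ordinary colimit of the replacement does not change the homotopy type, which is where one quietly uses that a model category is automatically left proper along cofibrations between cofibrant objects. Everything else — identifying the relevant $\cat{cat}(K)$, reading off the latching conditions, and invoking Propositions~\ref{hclhlpreswe} and~\ref{hotolim} — is routine.
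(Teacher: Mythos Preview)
Your argument is correct, but the paper takes a different and slicker route that sidesteps your ``main obstacle'' entirely. Rather than using the symmetric Reedy structure coming from $\cat{cat}(K)$ with $K$ two disjoint vertices (which, as you correctly observe, forces \emph{both} legs to be cofibrations), the paper puts an \emph{asymmetric} Reedy structure on the span $\lambda\leftarrow\mu\to\nu$ by declaring $\deg(\lambda)=0$, $\deg(\mu)=1$, $\deg(\nu)=2$. With these degrees, the arrow $\mu\to\nu$ raises degree while $\mu\to\lambda$ lowers it, so the latching conditions reduce to: $L$ cofibrant, $M$ cofibrant, and $M\to N$ a cofibration. One checks directly that this Reedy category has fibrant constants, so Proposition~\ref{hotolim} applies immediately to $D$ itself, and the case where $M\to L$ is the cofibration follows by symmetry.

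What your approach buys is that it stays within the $\cat{cat}(K)$ framework the paper has set up, at the cost of needing an auxiliary replacement $D'\to D$ and the gluing lemma (left properness on cofibrant objects) to compare $\colim D'$ with $\colim D$; this last fact, while standard, is not stated anywhere in the paper. The paper's approach buys a one-line proof with no replacement and no external lemma, at the cost of introducing an \emph{ad hoc} Reedy structure that lies outside the $\cat{cat}(K)$ family.
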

\begin{proof} For (1), assume that $M\to N$ is a cofibration, and that
  the indexing category $\cat{b}$ for $D$ has non-identity morphisms
  $\lambda\leftarrow\mu\to\nu$. The degree function $\deg(\lambda)=0$,
  $\deg(\mu)=1$, and $\deg(\nu)=2$ turns $\cat{b}$ into a Reedy
  category with fibrant constants, and ensures that $D$ is Reedy
  cofibrant. So Proposition \ref{hotolim} applies. If $M\to L$ is a
  cofibration, the corresponding argument holds by symmetry.

  For (2), the proofs are dual.
\end{proof}
Further details  may be found in \cite[Proposition 19.9.4]{hirs03}.

An instructive example of cofibrant replacement is provided by the
model category $\cat{top}$, whose every object $X$ admits a \emph{CW
model}. It consists of a cofibrant approximation $f\colon W_X\to X$,
where $W_X$ is filtered by skeleta $W^{(n)}$; these are constructed
inductively by the adjunction of euclidean cells, using repeated
pushouts of the form $W^{(n)}\leftarrow S^n\rightarrow D^{n+1}$. Two
weakly equivalent topological spaces have homotopy equivalent
CW-models. Such models have several advantages; for example, the
cellular chain complex $C\hspace{-1pt}e_*(W_X)$ is far more economical
than the singular version $C_*(X)$ for computing the homology of $X$.
They may, however, be difficult to make explicit, and are not
functorial. A genuine cofibrant replacement functor $\omega(X)\to X$
must be constructed with care, and is defined in \cite[\S
98]{dw-sp95}, for example.

An analogous example is given by $\cat{cdga}$, whose every
homologically connected object $A$ admits a \emph{minimal model}
\cite{sull78}. It consists of a cofibrant approximation $f\colon
M_A\to A$, where $M_A$ is connected, is free as a commutative graded
algebra, and contains a natural sequence of subalgebras that are
constructed by successive pushouts of the form \eqref{frexcdga}. It
follows from these requirements that the differential of $M_A$ takes
decomposable values only. Any two minimal models for $A$ are
necessarily isomorphic, and $M_A$ and $M_B$ are isomorphic for
quasi-isomorphic $A$ and $B$. So every zig-zag $A\to\dots\leftarrow B$
in $\cat{cdga}$ may be replaced by a diagram $A\leftarrow M\to B$,
where $M$ is minimal for both $A$ and $B$. The advantage of $M_A$ is
that it simplifies many calculations concerning $A$; disadvantages
include the fact that it may be difficult to describe for relatively
straightforward objects $A$, and that it cannot be chosen
functorially. A genuine cofibrant replacement functor requires
additional care, and seems first to have been made explicit in
\cite[\S 4.7]{bo-gu76}.

%
%
%
%
%
%
%
%
%
%%%%%%%%%%%%%%%%%%%% form%%%%%%%%%%%%%%%%%%%%%%%%%%%%%

\section{Models for $\djs(K)$}\label{modjs}

In this section we introduce algebraic and geometrical models for the
\daja\ spaces $\djs(K)$. Several of the results appear in
\cite{bu-pa02}, \cite{no-ra05}, and \cite{p-r-v04}, but our current
aim is to display them in the model categorical setting; they are then
more readily comparable with calculations in later sections. Analogous
constructions may be made in the real case by substituting $\Z/2$ for
$T$ \cite{da-ja91}.

Over $R$, the graded \emph{Stanley-Reisner algebra} \cite{stan96} of
an arbitrary simplicial complex $K$ is given by the quotient
\begin{equation}\label{srdef}
R[K]\;=\;S_R(V)/(v_\zeta\colon\zeta\notin K),
\end{equation}
otherwise known as the \emph{face ring} of $K$. Any quotient of
$S_R(V)$ by a square-free monomial ideal is the Stanley-Reisner
algebra of some simplicial complex.

The \emph{Stanley-Reisner coalgebra} $R\br{K}$ is the graded dual of
$R[K]$, and is less well-known. To aid its description, we
write $K_\bullet$ for the simplicial set generated by $K$
\cite[Example 1.4]{may67}, whose non-degenerate simplices are the
faces of $K$. Then $R\br{K}$ is the free $R$-module on generators
$v\br{\s}$, as $\s$ varies over the simplices of $K_\bullet$.  The
coproduct takes the form
\begin{equation}\label{srdiag}
  \delta v\br{\s}=
  \sum_{\s=\tau\,\sqcup\,\tau'}v\br{\tau}\otimes v\br{\tau'},
\end{equation}
where the sum ranges over all partitions of $\s$ into subsimplices
$\tau$ and $\tau'$. By construction, the elements $v\br{\s}$ form the
basis dual to the generating monomials $v_\s$ of $R[K]$.

The \emph{exterior face ring} $R_\wedge[K]$ is defined by analogy
with \eqref{srdef}, on the $1$--di\-men\-sio\-nal vertices $U$. So
long as $\frac{1}{2}\in R$, its dual $R^\wedge\br{K}$ is the free
$R$-module on generators $u\br{\s}$, as $\s$ varies over the faces
of $K$; the diagonal is also given by \eqref{srdiag}.

In the context of algebraic toric objects, Stanley-Reisner algebras
and coalgebras may also be described as limits and colimits of
$\cat{cat}(K)$ diagrams \cite{p-r-v04}. We invest them with zero
differential, and work in the categories $\cat{dga}_R$ and
$\cat{dgc}_R$ respectively. In $\cat{dga}_R$ we define the
$\cat{cat}^{op}(K)$-diagram $S_K(V)$, whose value on $\tau\supseteq\s$
is the projection $S_R(\tau)\to S_R(\s)$; and in $\cat{dgc}_R$ we
define the $\cat{cat}(K)$-diagram $S^K\br{V}$, whose value on
$\s\subseteq\tau$ is the inclusion $S_R\br{\sigma}\to S_R\br{\tau}$ of
coalgebras. Then there are isomorphisms
\begin{equation}\label{srlim}
  R[K]\stackrel{\cong}\longrightarrow\lim\nolimits^{\scat{cdga}_R}S_K(V)
  \sands
  \colim^{\scat{cdgc}_R}S^K\br{V}\stackrel{\cong}{\longrightarrow}R\br{K}
\end{equation}
(here and below the superscript in the notation for limit and
colimit refers to the target category). The limits and colimits of
\eqref{srlim} are created in $\cat{ch}_R$ and the additional
algebraic structure is superimposed; so they may equally well be
taken over $\cat{dga}_R$ and $\cat{dgc}_R$.

In the rational case, $\cat{cdga}$ and $\cat{cdgc}$ admit the model
structures of subsections \ref{cdga} and \ref{cdgc}, so the
corresponding homotopy limits and homotopy colimits exist. Since the
diagrams $S_K(V)$ and $S^K\br{V}$ are Reedy fibrant and Reedy
cofibrant respectively, we may rewrite \eqref{srlim} in terms of weak
equivalences
\begin{equation}\label{srhlim}
  \Q[K]\stackrel{\simeq}{\longrightarrow}\holim\nolimits^{\scat{cdga}}S_K(V)
  \sands
  \hocolim^{\scat{cdgc}}S^K\br{V}\stackrel{\simeq}{\longrightarrow}\Q\br{K},
\end{equation}
by applying Proposition \ref{hotolim}. Similar remarks apply to the
exterior cases.

In \cite{da-ja91}, \daaja\ introduce the space now known as $\djs(K)$
by means of a Borel construction. It is designed to ensure that the
cohomology ring $H^*(\djs(K);\Z)$ is isomorphic to $\Z[K]$, and
therefore that the homology coalgebra $H_*(\djs(K);\Z)$ is isomorphic
to $\Z\br{K}$ by duality.

In~\cite{bu-pa02} and \cite{p-r-v04}, the $\cat{cat}(K)$ diagram
$BT^K$ is defined in $\cat{top}$; its colimit
\begin{equation}\label{cxdjco}
  \colim BT^K\;=\;\bigcup_{\sigma\in K}BT^\sigma
\end{equation}
is a subcomplex of $BT^V$, and is shown to be homotopy equivalent to
$\djs(K)$. Here $T^\s<T^V$ is the coordinate subtorus, so its
classifying space $BT^{\s}$ is automatically a subcomplex of
$BT^V\simeq(C P^\infty)^V$. The colimit inherits a natural cell
structure from $BT^V$, whose cellular chain coalgebra may readily be
identified with $R\br{K}$ for any coefficient ring $R$. Since $BT^K$
is Reedy cofibrant, there is a weak equivalence $\hocolim
BT^K\to\djs(K)$, by Proposition \ref{hotolim}.

Similar remarks apply to the exterior case, by replacing the circle
$T$ with $\Z$. So the diagram $B\Z^K$ may be identified with $T^K$,
and is also Reedy cofibrant; its colimit $\djs_\wedge(K)$ is a
subcomplex of the torus $T^V$, and is necessarily finite. By
construction, $H^*(\djs_\wedge(K);R)$ and $H_*(\djs_\wedge(K);R)$ are
isomorphic to $R_\wedge[K]$ and $R^\wedge\br{K}$ respectively.

Sullivan's functor $\APL$ usually provides more sophisticated
contravariant models, in terms of rational cochain algebras. However,
the space $\djs(K)$ is formal \cite{no-ra05} for every complex $K$,
and we may use its minimal model to reduce the zig-zag of
quasi-isomorphisms \eqref{zigzag} to the form
\begin{equation}\label{djkminmod}
  \Q[K]\stackrel{e_K\!}{\llongleftarrow}M(\djs(K))
  \stackrel{\simeq}{\llongrightarrow}\APL(\djs(K)
\end{equation}
in $\cat{cdga}$. In this sense, $\Q[K]$ is just as good a model for
$\djs(K)$ as $\APL(\djs(K))$; for a discussion of uniqueness, we refer
to \cite[Proposition 5.10]{no-ra05}. Results on the coformality of
$\djs(K)$ are presented in Section~\ref{mlsfk}.

We summarise this section in terms of our motivating principle, that
certain functors preserve homotopy limit and colimit structures on
specific toric objects.
\begin{proposition}\label{firstinvt}
  As functors $\cat{top}\to\cat{cdgc}$ and $\cat{top}\to\cat{cdga}$
  respectively, rational homology preserves homotopy colimits and
  rational cohomology maps homotopy colimits to homotopy limits,
  on diagrams $T^K$ and $BT^K$.
\end{proposition}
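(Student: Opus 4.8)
The plan is to compute both sides of each asserted equivalence directly, exploiting the fact that every diagram in sight is already Reedy (co)fibrant, so that each homotopy (co)limit collapses to an ordinary (co)limit by Proposition \ref{hotolim} and only a strict categorical comparison map remains to be understood.

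First I would identify the diagrams produced by applying rational (co)homology objectwise. For each face $\s$ the space $BT^\s$ is homotopy equivalent to $(\C P^\infty)^{|\s|}$, so $H^*(BT^\s;\Q)$ is the polynomial algebra $S(\s)$ on the degree-$2$ vertices of $\s$, while $H_*(BT^\s;\Q)$ is the associated divided power coalgebra, which over $\Q$ is isomorphic as a coalgebra to the symmetric coalgebra $S\br{\s}$. An inclusion $BT^\s\subseteq BT^\tau$ induces the projection $S(\tau)\to S(\s)$ on cohomology and the coalgebra inclusion $S\br{\s}\to S\br{\tau}$ on homology. Hence rational cohomology carries the $\cat{cat}(K)$-diagram $BT^K$ to the $\cat{cat}^{op}(K)$-diagram $S_K(V)$ of Section \ref{modjs}, with zero differential, and rational homology carries it to the $\cat{cat}(K)$-diagram $S^K\br{V}$. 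The torus case is handled identically, with $T^\s$ in place of $BT^\s$, exterior algebras $\wedge(\s)$ in place of polynomial algebras, and the exterior analogue of $S\br{\s}$ in place of $S\br{\s}$, invoking $\frac12\in\Q$; this carries $T^K$ to the exterior versions of $S_K(V)$ and $S^K\br{V}$.

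Next I would assemble the two halves of the comparison. Since $BT^K$ and $T^K$ are Reedy cofibrant in $\cat{top}$, Proposition \ref{hotolim} supplies weak equivalences $\hocolim BT^K\to\colim BT^K=\djs(K)$ and $\hocolim T^K\to\colim T^K=\djs_\wedge(K)$; applying $H_*(-;\Q)$ and $H^*(-;\Q)$, which preserve weak equivalences, together with the classical computations $H_*(\djs(K);\Q)\cong\Q\br{K}$ and $H^*(\djs(K);\Q)\cong\Q[K]$ of \daaja, gives $H_*(\hocolim BT^K;\Q)\simeq\Q\br{K}$ and $H^*(\hocolim BT^K;\Q)\simeq\Q[K]$ in the relevant homotopy categories, and similarly for $T^K$. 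On the algebraic side, the diagram $S^K\br{V}$ and its exterior analogue are Reedy cofibrant in $\cat{cdgc}$, and $S_K(V)$ and its exterior analogue are Reedy fibrant in $\cat{cdga}$, so a second application of Proposition \ref{hotolim}, already recorded in \eqref{srhlim}, gives $\hocolim^{\scat{cdgc}} S^K\br{V}\simeq\colim^{\scat{cdgc}} S^K\br{V}\cong\Q\br{K}$ and $\holim^{\scat{cdga}} S_K(V)\simeq\lim\nolimits^{\scat{cdga}} S_K(V)\cong\Q[K]$, with the exterior cases identical.

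It then remains to splice these halves together along the natural comparison maps, and this is the only step demanding care. The universal cocone of $BT^K$ over $\colim BT^K$ induces, on rational homology, a coalgebra morphism $\colim^{\scat{cdgc}} H_*(BT^K;\Q)\to H_*(\colim BT^K;\Q)$; by the cellular description \eqref{cxdjco}, in which $\djs(K)=\bigcup_{\s\in K}BT^\s$ inherits from $BT^V$ a cell structure with cells only in even dimensions and cellular chain coalgebra exactly $\Q\br{K}$, this morphism is an isomorphism. Dually, restriction along the inclusions $BT^\s\subseteq\djs(K)$ realises the \daja\ isomorphism $H^*(\djs(K);\Q)\stackrel{\cong}{\longrightarrow}\lim\nolimits^{\scat{cdga}} H^*(BT^K;\Q)\cong\Q[K]$. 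Composing with the weak equivalences above produces the asserted zig-zags $\hocolim H_*(BT^K;\Q)\simeq H_*(\hocolim BT^K;\Q)$ in $\cat{cdgc}$ and $H^*(\hocolim BT^K;\Q)\simeq\holim H^*(BT^K;\Q)$ in $\cat{cdga}$, and the exterior analogues settle $T^K$. The point requiring attention, rather than a genuine obstacle, is simply that these comparison maps are natural and that the composite zig-zag is precisely the one induced by applying the (co)homology functor to the diagram; once Reedy (co)fibrancy of the diagrams is in hand this is a routine verification, and everything else is formal.
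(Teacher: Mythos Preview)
Your proposal is correct and follows essentially the same approach as the paper: the proposition is stated there as a summary of Section~\ref{modjs}, and its justification is precisely the chain of observations you reproduce --- Reedy cofibrancy of $BT^K$ and $T^K$ in $\cat{top}$, the cellular identifications $H^*(\djs(K);\Q)\cong\Q[K]$ and $H_*(\djs(K);\Q)\cong\Q\br{K}$, and the Reedy (co)fibrancy of $S_K(V)$ and $S^K\br{V}$ yielding \eqref{srhlim}. If anything, you are slightly more explicit than the paper about the natural comparison map $\colim H_*\to H_*\colim$ being an isomorphism, but this is a matter of presentation rather than a difference of method.
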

Proposition \ref{firstinvt} generalises immediately to the integral
situation, although the model categorical framework has to be relaxed
in the coalgebraic case.

%
%
%
%
%
%
%
%
%
%%%%%%%%%%%%%%%%%%%%%%%%%%%%%%%%%%%%%%%%%%%%%%%%%

\section{Models for $\zk$}\label{mozk}

In this section we introduce algebraic and geometrical models for the
moment angle complexes $\zk$ of \cite{da-ja91}, which form a second
important class of toric spaces associated to a simplicial complex
$K$.

Many applications to toric manifolds and combinatorial commutative
algebra are developed in \cite{bu-pa02}, where $\zk$ is described as
the homotopy fibre of the inclusion $\djs(K)\to BT^V$. As such, it is
obtained by pulling back the diagram
\begin{equation}\label{zkpul}
  P_K\;\letbe\;\djs(K)
  \stackrel{\scriptscriptstyle\subseteq}{\llongrightarrow}BT^V
  \stackrel{p}{\llongleftarrow}ET^V
\end{equation}
in $\cat{top}$, and therefore by restricting the universal
$T^V$-bundle $ET^V\rightarrow BT^V$ to $\djs(K)$. So $\zk$ inherits a
canonical $T^V$-action, and there exists a weak equivalence
$\zk\to\holim P_K$ by Proposition \ref{hopu}(2).

Alternatively, $\zk$ may be identified with the subcomplex
$\bigcup_{\sigma\in K}D_\s$ of the standard unit $2m$-disk
$(D^2)^V\subset\C^V$, where
\begin{equation}\label{zkdecomp}
  D_\s\;=\;\bigl\{(z_1,\ldots,z_m)\in(D^2)^V\colon |z_j|=1\text{ for }
  j\notin\s \bigr\}.
\end{equation}
Note that $D_\s\cong(D^2)^s\times T^{m-s}$ when $|\s|=s$, for
every $0\leq s\leq m$. A convenient cellular structure on $\zk$ is
given by combining this description with the decomposition of
$D^2$ into a single cell in each dimension $0$, $1$, and $2$. If
$K$ is a triangulated sphere then $\zk$ is homotopy equivalent to
a manifold~\cite[Lemma 6.13]{bu-pa02}.

We consider $\cat{cat}(K)$-diagrams $D^K$ and $T^{V/K}$, whose values
on $\s\subseteq\tau$ are the inclusion $D_\s\subseteq D_\tau$ and the
quotient $T^V/T^\s\to T^V/T^\tau$ respectively. In particular, $\colim
D^K$ is $\zk$. Objectwise projection induces a weak equivalence
$D^K\rightarrow T^{V/K}$ in $\fcat{cat$(K)$}{top}$, whose source is
Reedy cofibrant but whose target is not. Proposition \ref{hclhlpreswe}
therefore determines a weak equivalence
\begin{equation}\label{zkhocolim}
  \colim D^K\stackrel{\simeq}\longrightarrow\hocolim T^{V/K},
\end{equation}
which agrees with the equivalence $\zk\simeq\hocolim T^{V/K}$ of
\cite[Corollary 5.4]{p-r-v04}.

So we have described $\zk$ as $\holim P_K$ and as $\hocolim
T^{V/K}$. A weak equivalence between the two is given in the
categorical context by \cite[Proposition~5.1]{p-r-v04}, and we shall
apply $\APL$ to each. We employ the standard model \cite[\S
15(c)]{f-h-t01} for the principal $T^V$-bundle of \eqref{zkpul}, given
by the commutative diagram
\begin{equation}\label{btvmod}
\begin{CD}
S(V)@>>>(\wedge(U)\otimes S(V),d)\\ @V\simeq VV@VV\simeq V\\
\APL(BT^V)@>p^*>>\APL(ET^V)
\end{CD}
\end{equation}
in $\cat{cdga}$. The differential satisfies $du_j=v_j$ for $1\leq
j\leq m$; also, $p^*$ is a cofibration of the form described in
Subsection \ref{cdga}, and $S(V)$ is the minimal model for $BT^V$.

First we apply $\APL$ to~\eqref{zkpul}, on the understanding that it
does not generally convert pullbacks to pushouts \cite[\S3]{bo-gu76}.
We obtain the $\cat{cdga}$-diagram
\[
  P'_K\;\letbe\;\APL(\djs(K))\longleftarrow \APL(BT^V)\longrightarrow
  \APL(ET^V),
\]
which is not Reedy cofibrant.
\begin{theorem}\label{zkmodel2}
  The algebras $\APL(\zk)$ and $\hocolim P'_K$ are weakly equivalent
  in $\cat{cdga}$.
\end{theorem}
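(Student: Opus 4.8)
The plan is to show that both $\APL(\zk)$ and $\hocolim P'_K$ are weakly equivalent in $\cat{cdga}$ to the Koszul-type algebra $(\Q[K]\otimes\wedge(U),d)$, with $du_j$ the image $\bar v_j$ of $v_j$ under the Stanley-Reisner projection. For $\hocolim P'_K$ the difficulty is that $P'_K$ is not Reedy cofibrant, so I would first replace it by a weakly equivalent span that computes its homotopy colimit as an ordinary colimit. Using the standard model \eqref{btvmod} for the universal $T^V$-bundle, the leg $\APL(BT^V)\to\APL(ET^V)$ of $P'_K$ is linked, through the commuting square \eqref{btvmod} with vertical weak equivalences, to the free extension $S(V)\llongrightarrow(\wedge(U)\otimes S(V),d)$, which is a cofibration of the kind described in Subsection \ref{cdga} and has cofibrant source and target. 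For the third vertex I would replace $\APL(\djs(K))$ by its minimal model $M(\djs(K))$, using \eqref{djkminmod}, and lift the composite $S(V)\stackrel{\simeq}{\llongrightarrow}\APL(BT^V)\llongrightarrow\APL(\djs(K))$ through the acyclic fibration $M(\djs(K))\to\APL(\djs(K))$; such a lift exists because $S(V)$ is cofibrant. This yields a span $\widetilde P_K$ together with an objectwise weak equivalence $\widetilde P_K\to P'_K$ of diagrams over the pushout category, so $\hocolim\widetilde P_K\simeq\hocolim P'_K$ by Remark \ref{htpyinvce}.

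Next I would compute $\hocolim\widetilde P_K$. All three objects of $\widetilde P_K$ are cofibrant and one of its maps is a cofibration, so Proposition \ref{hopu}(1) gives a weak equivalence $\hocolim\widetilde P_K\stackrel{\simeq}{\llongrightarrow}\colim\widetilde P_K$. The colimit is the pushout of $M(\djs(K))\llongleftarrow S(V)\llongrightarrow(\wedge(U)\otimes S(V),d)$ in $\cat{cdga}$, namely $(M(\djs(K))\otimes\wedge(U),d)$ with $du_j$ the image of $v_j$ in $M(\djs(K))$. Applying $e_K$ of \eqref{djkminmod} and filtering by exterior degree in $\wedge(U)$ then shows, via the induced isomorphism of spectral sequences (the filtration is finite since $|U|=m$), that this is quasi-isomorphic to $(\Q[K]\otimes\wedge(U),d)$. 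Hence $\hocolim P'_K\simeq(\Q[K]\otimes\wedge(U),d)$.

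It remains to identify $\APL(\zk)$ with the same algebra. Here I would use that $\zk$ is the total space of the pullback \eqref{zkpul} of the principal $T^V$-bundle $ET^V\to BT^V$, whose base $BT^V$ is simply connected, and invoke the standard theory of Sullivan models for bundles \cite[\S15]{f-h-t01}: the relative model of $ET^V\to BT^V$ is precisely the free extension $S(V)\to(\wedge(U)\otimes S(V),d)$ of \eqref{btvmod}, so the pullback along $\djs(K)\to BT^V$ has model $\Q[K]\otimes_{S(V)}(\wedge(U)\otimes S(V),d)=(\Q[K]\otimes\wedge(U),d)$, where the map $q\colon S(V)\to\Q[K]$ serves as the (formality) model for $\djs(K)\to BT^V$. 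Combining the two computations gives the desired weak equivalence $\APL(\zk)\simeq\hocolim P'_K$ in $\cat{cdga}$.

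The main obstacle is this last step, which is exactly the point the discussion before the theorem flags: $\APL$ does not in general convert homotopy pullbacks into homotopy pushouts, so one must use the simple-connectivity of $BT^V$ — equivalently, the convergence of the Eilenberg-Moore spectral sequence of \eqref{zkpul} — to know that $\APL$ applied to $\zk\simeq\holim P_K$ really is the homotopy pushout of $P'_K$. A secondary technical point is checking that the three objectwise equivalences $M(\djs(K))\to\APL(\djs(K))$, $S(V)\to\APL(BT^V)$ and $(\wedge(U)\otimes S(V),d)\to\APL(ET^V)$ assemble into a single morphism of span-diagrams, so that Remark \ref{htpyinvce} applies; this is forced by \eqref{btvmod} together with the lifting construction, but should be recorded explicitly.
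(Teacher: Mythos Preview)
Your proposal is correct and follows essentially the same route as the paper: replace $P'_K$ by the cofibrant span $M(\djs(K))\leftarrow S(V)\to(\wedge(U)\otimes S(V),d)$ built from \eqref{btvmod} and \eqref{djkminmod}, invoke Proposition~\ref{hopu}(1) and Remark~\ref{htpyinvce}, and then appeal to \cite[\S15(c)]{f-h-t01} (using the simple-connectivity of $\djs(K)$) for the comparison with $\APL(\zk)$. The only cosmetic difference is that the paper maps $\colim P''_K=\wedge(U)\otimes M(\djs(K))$ directly to $\APL(\zk)$ via the natural map $\colim P''_K\to\colim P'_K\to\APL(\zk)$, rather than detouring through $(\Q[K]\otimes\wedge(U),d)$ as you do; your spectral-sequence step and your separate identification of $\APL(\zk)$ are exactly what the paper records \emph{after} the proof (citing \cite[Lemma~14.2]{f-h-t01}) when checking consistency with Theorem~\ref{zkmodel1}.
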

\begin{proof}
  By \eqref{btvmod}, there is an objectwise weak equivalence mapping
  the diagram
\begin{equation}\label{ppk}
 P''_K\;\letbe\; M(\djs(K))\stackrel{\;q}{\longleftarrow}S(V)
\longrightarrow\wedge(U)\otimes S(V)
\end{equation}
to $P'_K$. Moreover, $\colim P''_K$ is given by $(\wedge(U)\otimes
M(\djs(K)),d)$, where $du_j=q(v_j)$ for $1\leq j\leq m$, and $\colim
P'_K$ maps naturally to $\APL(\zk)$; so the composition
$e\colon\wedge(U)\otimes M(\djs(K))\to\APL(\zk)$ is defined.
Since $\djs(K)$ is simply connected, $e$ is a quasi-isomorphism by
\cite[\S15(c)]{f-h-t01}.

The right hand arrow of $P''_K$ is a cofibration and its objects are
cofibrant, so there is a quasi-isomorphism $\hocolim P''_K\to\colim
P''_K$ by Proposition \ref{hopu}(1). Furthermore, the induced map
$\hocolim P''_K\to\hocolim P'_K$ is a quasi-isomorph\-ism by Remark
\ref{htpyinvce}. So there is a zig-zag
\[
\hocolim P'_K\longleftarrow\hocolim P''_K\longrightarrow\APL(\zk)
\]
of quasi-isomorphisms in $\cat{cdga}$, as required.
\end{proof}

In order to apply $\APL$ to \eqref{zkhocolim}, we consider
$\cat{cat}^{op}(K)$-diagrams $\wedge(U)\otimes S_K(V)$ and
$\wedge_{U/K}$ in $\cat{cdga}$. The values of the former on
$\tau\supseteq\s$ are the quotient maps
\[
  (\wedge(U)\otimes S(\tau),d)\longrightarrow
  (\wedge(U)\otimes S(\s),d),
\]
where $d$ is defined on $\wedge(U)\otimes S(\s)$ by $du_j=v_j$ for
$u_j\in\s$, and $0$ otherwise; the values of the latter are the
monomorphisms $\wedge(u_i\notin\tau) \rightarrow
\wedge(u_i\notin\s)$ in $\cat{cdga}$. Objectwise projection
induces a weak equivalence
\begin{equation}\label{uskuk}
\wedge(U)\otimes S_K(V)\stackrel{\simeq}{\longrightarrow}
\wedge_{U/K}
\end{equation}
in $\fcat{cat$^{op}(K)$}{cdga}$, whose source is Reedy fibrant but
whose target is not.
\begin{theorem}\label{zkmodel1}
  The algebras $\APL(\zk)$ and $\holim\wedge_{U/K}$ are weakly
  equivalent in $\cat{cdga}$.
\end{theorem}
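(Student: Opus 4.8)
The plan is to mimic the strategy used for Theorem~\ref{zkmodel2}, but now working contravariantly with the homotopy \emph{limit} of a $\cat{cat}^{op}(K)$-diagram rather than the homotopy colimit of a $\cat{cat}(K)$-diagram. Recall that $\zk$ has already been exhibited as $\holim P_K$, where $P_K$ is the pullback diagram \eqref{zkpul}; the point is to transport this to $\cat{cdga}$ via $\APL$ and compare the result with $\holim\wedge_{U/K}$. The key input is the objectwise weak equivalence \eqref{uskuk}, whose source $\wedge(U)\otimes S_K(V)$ is Reedy fibrant, together with the standard rational model \eqref{btvmod} for the universal bundle.

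First I would note that, since $\wedge(U)\otimes S_K(V)$ is Reedy fibrant, Proposition~\ref{hotolim} gives a natural weak equivalence $\lim(\wedge(U)\otimes S_K(V))\to\holim(\wedge(U)\otimes S_K(V))$, and Remark~\ref{htpyinvce} applied to \eqref{uskuk} gives a weak equivalence $\holim(\wedge(U)\otimes S_K(V))\to\holim\wedge_{U/K}$. So it suffices to identify $\lim(\wedge(U)\otimes S_K(V))$ with a model for $\APL(\zk)$. Next I would compute this limit: since limits in $\cat{cdga}$ are created in $\cat{coch}$, the limit of the $\cat{cat}^{op}(K)$-diagram $\s\mapsto(\wedge(U)\otimes S(\s),d)$ over the quotient maps is the subalgebra of $\wedge(U)\otimes S(V)$ consisting of elements compatible under all restrictions; a short verification identifies this with $(\wedge(U)\otimes \Q[K],d)$, where $du_j=v_j$ in $\Q[K]$. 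This is exactly the standard Koszul-type model for the Borel fibration $\zk\to\djs(K)\to BT^V$ — indeed it is obtained from \eqref{btvmod} by restricting the base model $S(V)$ to the Reedy-fibrant diagram $S_K(V)$, whose limit is $\Q[K]$ by \eqref{srlim}.

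Then I would invoke the fibre-square argument from \cite[\S15(c)]{f-h-t01}, exactly as in the proof of Theorem~\ref{zkmodel2}: because $\djs(K)$ is simply connected and $\Q[K]\leftarrow S(V)\to\wedge(U)\otimes S(V)$ models the pullback square \eqref{zkpul} (using that $S(V)$ is the minimal model for $BT^V$ and that $p^*$ is a cofibration), the induced map $(\wedge(U)\otimes\Q[K],d)\to\APL(\zk)$ is a quasi-isomorphism. Chaining this with the two weak equivalences from the first step produces a zig-zag of quasi-isomorphisms in $\cat{cdga}$ connecting $\APL(\zk)$ to $\holim\wedge_{U/K}$, which is the assertion.

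The main obstacle I anticipate is the explicit identification of $\lim^{\scat{cdga}}(\wedge(U)\otimes S_K(V))$ with $(\wedge(U)\otimes\Q[K],d)$ together with the verification that the resulting algebra genuinely models the fibre of the Borel fibration — one must check that the differential $du_j=v_j$ descends correctly to the Stanley-Reisner quotient (the monomials $v_\zeta$ with $\zeta\notin K$ must be hit consistently), and that the hypotheses of the fibre-square lemma of \cite{f-h-t01} are met, in particular simple connectivity of $\djs(K)$ and cofibrancy/fibrancy of the relevant maps. Once that model is in hand, the homotopy-theoretic bookkeeping is routine, being a direct dualisation of the argument already given for Theorem~\ref{zkmodel2}.
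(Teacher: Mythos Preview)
Your argument is correct, but it takes a genuinely different route from the paper's. The paper applies $\APL$ objectwise to the diagram $D^K$ (the Reedy cofibrant replacement of $T^{V/K}$), obtaining a Reedy fibrant $\cat{cat}^{op}(K)$-diagram $\APL\circ D^K$; it then uses the fact that $A^*$ converts colimits of simplicial sets to limits of cdgas \cite[\S13.5]{bo-gu76} to produce a zig-zag $\lim(\APL\circ D^K)\simeq\APL(\zk)$, and compares $\APL\circ D^K$ objectwise with $\wedge(U)\otimes S_K(V)$ via explicit cocycle choices for the generators of $H^1(T^V;\Q)$. You instead recycle the Borel-fibration model from Theorem~\ref{zkmodel2}: you identify $\lim(\wedge(U)\otimes S_K(V))$ with $(\wedge(U)\otimes\Q[K],d)$ and connect the latter to $\APL(\zk)$ via the pullback square~\eqref{zkpul}. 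This is exactly the content of the consistency paragraph the paper places \emph{after} its proof. The paper's route is independent of Theorem~\ref{zkmodel2} and illustrates directly how $\APL$ carries the homotopy colimit description $\hocolim T^{V/K}$ to $\holim\wedge_{U/K}$; your route is shorter but effectively derives Theorem~\ref{zkmodel1} as a corollary of Theorem~\ref{zkmodel2}. One small caution: your ``induced map'' $(\wedge(U)\otimes\Q[K],d)\to\APL(\zk)$ is not quite direct, since $\Q[K]$ is not a Sullivan algebra and the hypotheses of \cite[\S15(c)]{f-h-t01} require cofibrancy of the base model; you must pass through $\wedge(U)\otimes M(\djs(K))$ and invoke \cite[Lemma~14.2]{f-h-t01}, giving a zig-zag rather than a single map.
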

\begin{proof}
Choosing representative cocycles for generators of $H^1(T^V;\Q)$
yields compatible quasi-isomorphisms
$\wedge(u_i\notin\s)\to\APL(D_\s)$, as $\s$ ranges over the faces of
$K$. These define a weak equivalence $\wedge_{U/K}\to\APL\circ D^K$ of
$\cat{cat}^{op}(K)$-diagrams, which combines with \eqref{uskuk} to
give a weak equivalence
\[
 \wedge(U)\otimes S_K(V)\stackrel{\simeq}{\longrightarrow}\APL\circ
 D^K
\]
of Reedy fibrant diagrams in $\cat{cdga}$. Their limits are therefore
quasi-isomorphic, since $\lim$ is right Quillen. On the other hand,
$A^*$ maps colimits to limits \cite[\S13.5]{bo-gu76}), so there exists
a zig-zag of quasi-isomorphisms
\begin{equation}\label{alim}
  \lim(\APL\circ D^K)\lzze\APL(\zk)
\end{equation}
in $\cat{cdga}$, by analogy with \cite[(5.6)]{no-ra05}. Hence
$\lim(\wedge(U)\otimes S_K(V))$ and $\APL(\zk)$ are weakly equivalent.

Finally, \eqref{uskuk} and Proposition \ref{hotolim} provide a
quasi-isomorphism
\begin{equation}\label{holimuk}
\lim(\wedge(U)\otimes S_K(V))
\stackrel{\simeq}{\longrightarrow}
\holim\wedge_{U/K}
\end{equation}
in $\cat{cdga}$.
\end{proof}

Our two algebraic models for $\zk$ are consistent, in the
following sense. Define $(\wedge(U)\otimes\Q[K],\,d)$ in $\cat{cdga}$
by $du_j=v_j$ for $1\leq j\leq m$, and recall the quasi-isomorphism
$e_K$ of \eqref{djkminmod}; then \cite[Lemma 14.2]{f-h-t01} shows
that
\begin{equation}
1\otimes e_K\colon\wedge(U)\otimes M(\djs(K))\llongrightarrow
\wedge(U)\otimes\Q[K]
\end{equation}
is a quasi-isomorphism. So $\wedge(U)\otimes\Q[K]$ is
quasi-isomor\-phic to $\hocolim P'_K$. On the other hand, it is
isomorphic to $\lim(\wedge(U)\otimes S_K(V))$ because limits are
created in $\cat{coch}$. So $\wedge(U)\otimes\Q[K]$ is also
quasi-isomorphic to $\holim\wedge_{U/K}$.

We may now retrieve the calculations of~\cite{bu-pa02} for
$H^*(\zk;\Q)$.
\begin{corollary}\label{ratzk}
There are isomorphisms of (bi)graded algebras
\[
  H^*(\mathcal Z_K;\Q)\;\cong\;\Tor_{S(V)}(\Q[K],\Q)
  \;\cong\; H(\wedge(U)\otimes\Q[K],d).
\]
\end{corollary}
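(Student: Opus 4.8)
The plan is to assemble the corollary from results already in place. For the second isomorphism $\Tor_{S(V)}(\Q[K],\Q)\cong H(\wedge(U)\otimes\Q[K],d)$, I would observe that $(\wedge(U)\otimes S(V),d)$ with $du_j=v_j$ is the Koszul resolution of $\Q$ as an $S(V)$-module, so that $\wedge(U)\otimes\Q[K]$ computes $\Tor_{S(V)}(\Q[K],\Q)$ by definition; the multiplicative structure on $\Tor$ agrees with that induced on homology because the Koszul complex is a differential graded algebra resolution, and the bigrading is the internal-versus-resolution bigrading. This is standard homological algebra (see \cite[Chapter~3]{bu-pa02}) and I would not belabour it.

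For the first isomorphism $H^*(\zk;\Q)\cong\Tor_{S(V)}(\Q[K],\Q)$, I would route through the models established above. By the discussion following Theorem \ref{zkmodel1}, there are quasi-isomorphisms in $\cat{cdga}$ relating $\APL(\zk)$, $\hocolim P'_K$, and $\holim\wedge_{U/K}$, and moreover $(\wedge(U)\otimes\Q[K],d)$ is quasi-isomorphic to $\hocolim P'_K$ via the map $1\otimes e_K$ built from the minimal-model equivalence $e_K$ of \eqref{djkminmod}. Since $\APL$ induces an isomorphism $H(\APL(\zk))\cong H^*(\zk;\Q)$ as graded algebras by the PL-de Rham theorem, and quasi-isomorphisms in $\cat{cdga}$ induce isomorphisms on cohomology algebras, we get $H^*(\zk;\Q)\cong H(\wedge(U)\otimes\Q[K],d)=\Tor_{S(V)}(\Q[K],\Q)$ as graded algebras. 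One then remarks that $\zk$ has a cell structure (from \eqref{zkdecomp}) giving $H^*(\zk;\Q)$ a natural bigrading matching the Tor bigrading, so the isomorphism is one of bigraded algebras.

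The main obstacle is bookkeeping rather than conceptual: one must check that all the comparison maps --- the $\APL$-equivalences, the equivalence $\hocolim P'_K\simeq\wedge(U)\otimes M(\djs(K))\simeq\wedge(U)\otimes\Q[K]$, and the Koszul identification --- are compatible with the \emph{multiplicative} and \emph{bigraded} structures, not merely with the underlying cohomology groups. In particular I would want to confirm that the bigrading on $H^*(\zk;\Q)$ coming from the cellular decomposition \eqref{zkdecomp} corresponds under these equivalences to the $(\wedge(U)\text{-degree},\,S(V)\text{-degree})$ bigrading on $\wedge(U)\otimes\Q[K]$; this is essentially the content of \cite[Theorem 7.6]{bu-pa02}, and the contribution of the present section is to show that it also drops out of the homotopy-colimit formalism. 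I would phrase the proof as a short chain of citations to Theorems \ref{zkmodel2} and \ref{zkmodel1} and the intervening remarks, together with the Koszul complex identification, leaving the multiplicativity and bigrading checks to the references.
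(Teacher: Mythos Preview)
Your proposal is correct and mirrors the paper's own treatment: the corollary is stated there without proof, as an immediate consequence of the preceding theorems and the paragraph showing that $(\wedge(U)\otimes\Q[K],d)$ is quasi-isomorphic to both $\hocolim P'_K$ and $\holim\wedge_{U/K}$, together with the standard Koszul-resolution identification of $\Tor_{S(V)}(\Q[K],\Q)$. Your discussion of the bigrading is slightly more explicit than the paper's, which simply asserts the (bi)graded isomorphism and defers the details to \cite{bu-pa02}.
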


\begin{remarks}
The algebra $(\wedge(U)\otimes S(\s),d)$ is actually the cellular
cochain algebra of $(S^\infty)^\s\times T^{V\setminus\s}$, where
$S^\infty$ has a single cell in each dimension. So the projection
$\wedge(U)\otimes S(\s)\to \wedge(u_i\notin\s)$ is an algebraic model
for the retraction $(S^\infty)^\s\times T^{V\setminus\s}\to
T^{V\setminus\s}$, given any face $\s$ of $K$. The corresponding model
for the inclusion $D_\s\to(S^\infty)^\s\times T^{V\setminus\s}$ is the
projection
\[
  \wedge(U)\otimes S(\s)\to
  \wedge(U)\otimes S(\s)\bigr/
  (u_jv_j=v_j^2=0\text{ for $j\in\s$}),
\]
which reflects the fact that $(u_jv_j=v_j^2=0\text{ for $j\in\s$})$ is
an acyclic ideal~\cite{b-b-p04}. These models also work over $\Z$, and
are used in~\cite{b-b-p04} to establish an integral version of
Corollary~\ref{ratzk} (which was confirmed in ~\cite{fran06} by other
methods). The model categorical interpretation must be relaxed in
this case; nevertheless, $\wedge_R(U)\otimes R[K]$ may still be
interpreted as a homotopy limit in $\cat{dga}_R$.

As shown in \cite{bask03}, the algebra $\wedge(U)\otimes\Q[K]$ admits
non-trivial triple Massey products for certain complexes $K$, in which
case $\zk$ cannot be formal.
\end{remarks}

We may summarise this section by combining Theorems \ref{zkmodel2}
and \ref{zkmodel1}.
\begin{proposition}\label{secondinvt}
  As functors $\cat{top}\to\cat{cdga}$, both rational cohomology and
  $\APL$ map homotopy limits to homotopy colimits on diagrams $P_K$,
  and map homotopy colimits to homotopy limits on diagrams $T^{V/K}$.
\end{proposition}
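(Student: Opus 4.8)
The plan is to deduce Proposition \ref{secondinvt} directly by collating the two equivalences already established in Theorems \ref{zkmodel2} and \ref{zkmodel1}, together with the identifications of $\zk$ as a homotopy limit and a homotopy colimit recorded earlier in Sections \ref{mozk}. The only genuine content is bookkeeping: matching the abstract statement ``maps homotopy limits to homotopy colimits'' against the concrete weak equivalences in hand, and checking that rational cohomology behaves as claimed even though it is a priori only a functor to $\cat{coch}$ rather than to $\cat{cdga}$.

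First I would recall the two descriptions of $\zk$: the weak equivalence $\zk\to\holim P_K$ coming from Proposition \ref{hopu}(2) applied to the pullback \eqref{zkpul}, and the weak equivalence $\colim D^K\to\hocolim T^{V/K}$ of \eqref{zkhocolim}, which exhibits $\zk\simeq\hocolim T^{V/K}$. For the $\APL$ statement on $P_K$: Theorem \ref{zkmodel2} gives $\APL(\zk)\simeq\hocolim P'_K$ in $\cat{cdga}$, where $P'_K=\APL\circ P_K$ is precisely the image diagram; since $\APL$ carries $P_K$ objectwise to $P'_K$ and $\zk\simeq\holim P_K$, this says exactly that $\APL$ sends the homotopy limit $\holim P_K$ to the homotopy colimit $\hocolim(\APL\circ P_K)$. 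For the $\APL$ statement on $T^{V/K}$: Theorem \ref{zkmodel1} gives $\APL(\zk)\simeq\holim\wedge_{U/K}$, and \eqref{uskuk} together with the weak equivalence $\wedge_{U/K}\to\APL\circ D^K\simeq\APL\circ T^{V/K}$ (the latter because $D^K\to T^{V/K}$ is an objectwise weak equivalence, so $\APL$ of it is too) identifies $\holim\wedge_{U/K}$ with $\holim(\APL\circ T^{V/K})$; combined with $\zk\simeq\hocolim T^{V/K}$ this says $\APL$ sends $\hocolim T^{V/K}$ to $\holim(\APL\circ T^{V/K})$.

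The rational cohomology assertions then follow by passing to homology: rational cohomology $H^*(-;\Q)$ is the composite of $\APL$ with the homology functor $H(-,0)$ into $\cat{cdga}$ with zero differential, and $H$ preserves the relevant weak equivalences (quasi-isomorphisms go to isomorphisms). Concretely, applying $H$ to the zig-zags of Theorems \ref{zkmodel2} and \ref{zkmodel1} converts them into isomorphisms $H^*(\zk;\Q)\cong H(\hocolim P'_K)$ and $H^*(\zk;\Q)\cong H(\holim\wedge_{U/K})$; I would then note that, because each diagram in question ($P'_K$, $\wedge_{U/K}$, and their objectwise-equivalent partners) has cohomology computed objectwise by the Reedy structure, $H$ of the homotopy (co)limit agrees with the homotopy (co)limit of the objectwise cohomology diagram. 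This last point is the only place requiring a little care, and is the step I expect to be the main obstacle: one must justify that $H$ commutes with the particular $\holim$ and $\hocolim$ appearing here. This is not formal in general, but in our situation it holds because the diagrams are over $\cat{cat}(K)$ or $\cat{cat}^{op}(K)$ and are Reedy (co)fibrant, so by Propositions \ref{hotolim} and \ref{hclhlpreswe} the homotopy (co)limits may be computed as honest (co)limits created in $\cat{coch}$, and $H$ of these is read off objectwise up to the known quasi-isomorphisms. Assembling these observations yields the four claims of Proposition \ref{secondinvt}.
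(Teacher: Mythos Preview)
Your approach matches the paper's exactly: the paper gives no proof at all beyond the sentence ``We may summarise this section by combining Theorems \ref{zkmodel2} and \ref{zkmodel1},'' so the proposition is intended as a restatement of what has already been established, and your bookkeeping of the weak equivalences $\zk\simeq\holim P_K$, $\zk\simeq\hocolim T^{V/K}$ together with Theorems \ref{zkmodel2} and \ref{zkmodel1} is precisely that.

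One caution on the extra work you do for the rational-cohomology half. Your instinct that this step ``requires a little care'' is well placed, but the resolution you sketch does not quite close the gap. The issue is not whether $H$ commutes with the relevant (co)limits; it is that $H^*(\zk;\Q)$ with zero differential need \emph{not} be weakly equivalent in $\cat{cdga}$ to $\holim\wedge_{U/K}\simeq\wedge(U)\otimes\Q[K]$, precisely because $\zk$ can fail to be formal (the paper notes this explicitly just before the proposition). So if one reads ``rational cohomology maps $\hocolim T^{V/K}$ to $\holim(H^*\circ T^{V/K})$'' as asserting a weak equivalence in $\cat{cdga}$, it is false in general. The paper is being deliberately informal here: the intended content is Corollary \ref{ratzk}, namely that the cohomology of $\zk$ is \emph{computed} by the cohomology of the algebraic homotopy (co)limit, not that the two are quasi-isomorphic as objects of $\cat{cdga}$. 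Your write-up should reflect that looser reading rather than attempt to prove a stronger statement that does not hold.
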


%
%
%
%
%
%
%
%
%
%%%%%%%%%%%%%%%%%%%%%%%%%%%%%%%%%%%%%%%%%%%%%%%%%

\section{Models for quasitoric manifolds}\label{moqtm}

In this section we describe the properties of \daaja's quasitoric
manifolds within the model categorical framework. We prove that they
are rationally formal, and extend our analysis to generalisations such
as the torus manifolds of \cite{ma-pa06}.

For any simplicial convex $n$--polytope, the Stanley-Reisner algebra
of the boundary complex $K$ is \emph{Cohen--Macaulay} \cite{br-he98},
and $\Q[K]$ admits a $2$-dimensional \emph{linear system of
parameters} $l_1$, \dots, $l_n$. If the parameters are integral, then
an associated quasitoric manifold $M[l]$ may be constructed; up to
homotopy, it is the pullback of the diagram
\begin{equation}\label{qtn}
P_l\;\letbe\;\djs(K)\stackrel{l}{\llongrightarrow}BT^n
\stackrel{p}{\llongleftarrow}ET^n,
\end{equation}
where $l$ represents the sequence $(l_1,\dots,l_n)$ as an element
of $H^2(\djs(K);\Z^n)$. In these circumstances, the quotient of
$M[l]$ by the canonical $T^n$-action is the simple polytope whose
boundary is dual to $K$, and $\Z[K]$ is free and of finite rank
over the polynomial algebra $S_\Z(L)$. Every object of $\cat{top}$
is fibrant and $p$ is a fibration, so there is a weak equivalence
$M[l]\to\holim P_l$ by Proposition \ref{hopu}(2).

In $2$-dimensional integral homology, $l$ induces
$l_*\colon\Z^V\rightarrow \Z^n$, which extends to a
\emph{dicharacteristic} homomorphism $\ell\colon T^V\rightarrow T^n$
\cite{bu-ra01}. The kernel of $\ell$ is an $(m-n)$-dimensional
subtorus $T[l]<T^V$, which acts freely on $\zk$ with quotient $M[l]$.
Moreover, $\ell$ maps $T^\sigma$ isomorphically onto its image, which
we denote by $T(\s)<T^n$ for any face $\s$ of $K$. We write the
integral cohomology ring $H^*(T^n/T(\s))$ as $\wedge(\s,l)$, because
it is isomorphic to an exterior subalgebra on $n-|\s|$ generators.

Following \cite[Proposition 5.3]{w-z-z99}, we may also describe $M[l]$
as the homotopy colimit of the $\cat{cat}(K)$-diagram $T^{n/(K,l)}$,
whose value on $\s\subseteq\tau$ is the projection $T^n/T(\s)\to
T^n/T(\tau)$. This result appears to be the earliest mention of
homotopy colimits in the toric context, and refers to a diagram that
is clearly not Reedy cofibrant. The weak equivalence of $\hocolim
T^{n/(K,l)}$ with the pullback of \eqref{qtn} follows directly from
\cite[Prop.~5.1]{p-r-v04}, and is the quotient of the equivalence
between \eqref{zkpul} and \eqref{zkhocolim} by $T[l]$, which acts
freely on the entire construction.

As in Section \ref{modjs}, we apply $\APL$ to both descriptions. We
need the $\cat{cat}^{op}(K)$-diagram $\wedge_{n/(K,l)}$, which maps
$\tau\supseteq\s$ to the inclusion $\wedge(\tau,l)\to\wedge(\s,l)$
in $\cat{cdga}$, and is clearly not Reedy fibrant. We also use the
standard model \cite[Chapter 15(c)]{f-h-t01} for the principal
$T^n$-bundle of \eqref{qtn}, given by the commutative diagram
\begin{equation}\label{btnmod}
\begin{CD}
S(Y)@>>>(\wedge(X)\otimes S(Y),d)\\ @V\simeq VV@VV\simeq V\\
\APL(BT^n)@>p^*>>\APL(ET^n)
\end{CD}
\end{equation}
in $\cat{cdga}$;
%where $X$ denotes the set of $1$--dimensional
%variables $x_1,\ldots,x_n$ satisfying $dx_i=y_i$ for $1\leq i\leq
%n$, and $Y=\{y_1,\ldots,y_n\}$
%%$Y=(y_1,\ldots,y_n)$.
here $X$ and $Y$ denote sets of $1$-- and $2$--dimensional variables
$x_1,\ldots,x_n$ and $y_1,\dots,y_n$ respectively, with $dx_i=y_i$ for
$1\leq i\leq n$. The upper arrow is a cofibration, as described in
Subsection \ref{cdga}, and $S(Y)$ is the minimal model for $BT^n$, as
in \eqref{btvmod}.

We proceed by generalising arguments of Bousfield and Gugenheim
\cite[\S16]{bo-gu76}. We work with an abitrary simply connected
CW-complex $B$ of finite type, and a set of cohomology classes $l_1$,
\dots, $l_n$ in $H^2(B;\Z)$. By analogy with \eqref{qtn}, we let
$l\colon B\to BT^n$ represent $(l_1,\dots, l_n)$, and define
$E[l]$ as the colimit of
\begin{equation}\label{etn}
  B\stackrel{l}{\llongrightarrow}BT^n
  \stackrel{p}{\llongleftarrow}ET^n.
\end{equation}
Thus $l^*(y_i)=l_i$ in $H^2(B;\Z)$ for $1\leq i\leq n$, and
$H^*(B;\Z)$ is an $S_\Z(Y)$-module.

\begin{proposition}\label{fromfib}
  If $B$ is a formal space and $H^*(B)$ is free over $S(Y)$, then
  $E[l]$ is also formal.
\end{proposition}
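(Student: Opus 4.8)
The plan is to build an explicit \cat{cdga}-model for $E[l]$ from a minimal model of $B$, exploit freeness of $H^*(B)$ over $S(Y)$ to pass to a model over the cohomology, and then recognise the resulting algebra as formal. First I would use formality of $B$: there is a zig-zag of quasi-isomorphisms connecting $\APL(B)$ to $(H^*(B),0)$, and since $l\colon B\to BT^n$ corresponds to the classes $l_i\in H^2(B)$, the composite $S(Y)\to\APL(BT^n)\to\APL(B)$ is homotopic (in $\cat{cdga}$) to the map $S(Y)\to H^*(B)$ sending $y_i$ to $l_i$. Using the standard model \eqref{btnmod} and the fact that $p^*\colon S(Y)\to(\wedge(X)\otimes S(Y),d)$ is a cofibration with cofibrant source, I can replace the diagram $\APL\circ\eqref{etn}$ by an objectwise-weakly-equivalent \cat{cdga}-diagram
\[
  H^*(B)\stackrel{\bar l}{\llongleftarrow}S(Y)\llongrightarrow
  (\wedge(X)\otimes S(Y),d),
\]
where $\bar l(y_i)=l_i$, in which the right-hand arrow is a cofibration and every object is cofibrant.

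Next I would compute the colimit of this replacement diagram. Pushing out a cofibration along $\bar l$, the colimit is $(\wedge(X)\otimes H^*(B),d)$ with differential determined by $dx_i=l_i$ and $d=0$ on $H^*(B)$. By Proposition~\ref{hopu}(1) the natural map $\hocolim\to\colim$ of this diagram is a weak equivalence, and since it is objectwise weakly equivalent to $\APL\circ\eqref{etn}$, Remark~\ref{htpyinvce} and a short zig-zag (mimicking the proof of Theorem~\ref{zkmodel2}, together with the comparison $\colim(\APL\circ\eqref{etn})\to\APL(E[l])$ which is a quasi-isomorphism because $B$ is simply connected, by \cite[\S15(c)]{f-h-t01}) identify $(\wedge(X)\otimes H^*(B),d)$ as a \cat{cdga}-model for $\APL(E[l])$. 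So it remains to show this particular algebra is formal.

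The heart of the matter is the freeness hypothesis. Write $\k=\Q$. Since $H^*(B)$ is a free module over the polynomial algebra $S(Y)$ acting through $\bar l$, the Koszul complex $(\wedge(X)\otimes S(Y),d)$ with $dx_i=y_i$ is a free resolution of $\Q$ over $S(Y)$, and hence $(\wedge(X)\otimes H^*(B),d)=H^*(B)\otimes_{S(Y)}(\wedge(X)\otimes S(Y),d)$ has cohomology concentrated in the ``bottom'' part: $H(\wedge(X)\otimes H^*(B),d)\cong \Tor_{S(Y)}(H^*(B),\Q)\cong H^*(B)\otimes_{S(Y)}\Q=H^*(B)/(l_1,\dots,l_n)$, the quotient ring, with all higher $\Tor$ vanishing precisely because $H^*(B)$ is free (indeed $S(Y)$-free of finite rank, $S(Y)$ being a polynomial ring on even generators). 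Writing $\bar H$ for this quotient algebra, I would then exhibit a quasi-isomorphism $(\wedge(X)\otimes H^*(B),d)\to(\bar H,0)$ realising the projection on cohomology: choosing a homogeneous $S(Y)$-basis of $H^*(B)$ lifting a $\Q$-basis of $\bar H$ gives a multiplicative-up-to-the-ideal splitting, and since the differential is $\Q$-linear and kills $\bar H$, the projection $\wedge(X)\otimes H^*(B)\to \bar H$ (zero on $X$ and on the ideal $(Y)\cdot H^*(B)$) is a \cat{cdga}-map inducing an isomorphism on cohomology. Then $(\wedge(X)\otimes H^*(B),d)\to(H(\wedge(X)\otimes H^*(B)),0)$ is a quasi-isomorphism, which by Definition~\ref{focofo} makes it — and hence $\APL(E[l])$, via the zig-zag above — formal in \cat{cdga}; that is, $E[l]$ is a formal space.

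I expect the main obstacle to be the middle step: verifying that the projection $\wedge(X)\otimes H^*(B)\to\bar H$ is genuinely a \emph{multiplicative} map, i.e.\ that the subspace it kills is a differential graded \emph{ideal}. The submodule $X\cdot(\wedge(X)\otimes H^*(B))+(Y)\cdot H^*(B)$ is visibly an ideal of the underlying graded algebra and is $d$-stable (since $dx_i=l_i\in(Y)\cdot H^*(B)$), so the quotient inherits a \cat{cdga}-structure; the only real content is that this quotient is $\bar H$ with zero differential, which follows from the freeness computation of $\Tor$ above. One must be slightly careful that ``free over $S(Y)$'' is used in the strong finite-rank Cohen--Macaulay sense guaranteed in the quasitoric case (as in Section~\ref{moqtm}), so that all higher $\Tor_{S(Y)}^{>0}(H^*(B),\Q)$ vanish; this is exactly the point where the hypothesis $H^*(B)$ free over $S(Y)$ enters essentially, and where a general formal $B$ would fail to give a formal $E[l]$.
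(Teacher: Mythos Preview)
Your approach is essentially the paper's: construct $(\wedge(X)\otimes H^*(B),d)$ with $dx_i=l_i$ as a \cat{cdga}-model for $E[l]$ via formality of $B$ and the relative Sullivan model of the principal $T^n$-bundle, then use freeness of $H^*(B)$ over $S(Y)$ to exhibit the projection onto $H^*(B)/(Y)\cong H^*(E[l])$ as a quasi-isomorphism. The paper makes one step more concrete than you do---rather than appealing to an abstract ``objectwise replacement'' of the $\APL$-diagram by one with corner $H^*(B)$, it inserts the minimal model $M(B)$ as a bridge, yielding the explicit zig-zag $(\wedge(X)\otimes H^*(B),d)\stackrel{1\otimes e}{\longleftarrow}(\wedge(X)\otimes M(B),d)\stackrel{e'}{\longrightarrow}\APL(E[l])$; note it is $e'$ (not the comparison map $\colim(\APL\circ\eqref{etn})\to\APL(E[l])$, which need not be a quasi-isomorphism) that \cite[\S15(c)]{f-h-t01} controls.
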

\begin{proof}
The minimal model $M(B)$ reduces the zig-zag \eqref{zigzag} to the form
\[
  H^*(B)\stackrel{e}{\longleftarrow}M(B)\longrightarrow \APL(B),
\]
and contains $2$--dimensional cocycles $m_i$ such that $e(m_i)=l_i$
for $1\leq i\leq n$. Applying $\APL$ to \eqref{etn} and importing
\eqref{btnmod} yields the commutative ladder
\begin{equation}\label{bladder}
\begin{CD}
  M(B)@<<<S(Y)@>>>\wedge(X)\otimes S(Y)\\
  @V\simeq VV @VV\simeq V @VV\simeq V\\
  \APL(B)@<l^*<<\APL(BT^n) @>p^*>>\APL(ET^n)
\end{CD}
\end{equation}
in $\cat{cdga}$. The colimit of the upper row is given by
$(\wedge(X)\otimes M(B),d)$, where $dx_i=m_i$ for $1\leq i\leq n$,
and the colimit of the lower row maps naturally to $\APL(E[l])$; so
the composition $e'\colon\wedge(X)\otimes M(B)\to\APL(E[l])$ is
defined.  Since $B$ is simply connected, $e'$ is a quasi-isomorphism
by \cite[\S15(c)]{f-h-t01}, and we obtain a zig-zag
\begin{equation}\label{zzey}
(\wedge(X)\otimes H^*(B),d)\stackrel{1\otimes e}{\llongleftarrow}
\wedge(X)\otimes M(B)\stackrel{e'}{\llongrightarrow}\APL(E[l])
\end{equation}
in $\cat{cdga}$, where $dx_i=l_i$ in $H^2(B)$ for $1\leq i\leq n$.
Furthermore, $1\otimes e$ is a quasi-isomorphism by \cite[Lemma
14.2]{f-h-t01}.

We now utilise the fact that $H^*(B)$ is free over $S(Y)$.  Taking
the quasi-\-iso\-mor\-phism $\wedge(X)\otimes S(Y)\to\Q$ and
applying the functor $\otimes_{S(Y)}H^*(B)$ yields a
quasi-isomorphism $(\wedge(X)\otimes H^*(B),d)\to H^*(B)/(Y)$,
which is given by projection onto the second factor.  Moreover,
the Eilenberg--Moore spectral sequence confirms that the natural
map $H^*(B)/(Y)\to H^*(E[l])$ is an isomorphism (as explained in
\cite[Lemma~2.1]{ma-pa06}, for example). Combining the resulting
quasi-isomorphism $\wedge(X)\otimes H^*(B)\to H^*(E[l])$ with
\eqref{zzey} then provides a zig-zag
\[
H^*(E[l])\lzze\APL(E[l])
\]
in $\cat{dgca}$, as required.
\end{proof}

\begin{corollary}\label{toricformal}
\sloppy Every quasitoric manifold $M[l]$ is formal, and the
algebra $\APL(M[l])$ is weakly equivalent to
$\holim\wedge_{n/(K,l)}$ in $\cat{cdga}$.
\end{corollary}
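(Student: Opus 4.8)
The plan is to derive Corollary~\ref{toricformal} as a direct specialisation of Proposition~\ref{fromfib}, applied to the base space $B = \djs(K)$. First I would check that the hypotheses of Proposition~\ref{fromfib} are satisfied in this setting: the \daja\ space $\djs(K)$ is simply connected, of finite type, and formal by~\cite{no-ra05}; and when $K$ is the boundary of a simplicial $n$--polytope, $\Q[K] \cong H^*(\djs(K))$ is Cohen--Macaulay \cite{br-he98}, hence free over the polynomial subalgebra $S(Y) = S_\Q(l_1,\dots,l_n)$ generated by a $2$--dimensional \lsop. Taking the classes $l_i \in H^2(\djs(K);\Z)$ to be those defining the dicharacteristic, the colimit $E[l]$ of \eqref{etn} coincides up to homotopy with the Borel-type construction \eqref{qtn} whose homotopy pullback is $M[l]$. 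Since every object of $\cat{top}$ is fibrant and $p$ is a fibration, $M[l] \simeq \holim P_l \simeq \colim(\djs(K) \to BT^n \leftarrow ET^n) = E[l]$, so Proposition~\ref{fromfib} immediately gives that $M[l]$ is formal.

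For the second assertion, that $\APL(M[l])$ is weakly equivalent to $\holim \wedge_{n/(K,l)}$, I would run the argument of Theorem~\ref{zkmodel1} with $BT^V$ replaced by $BT^n$ and the exterior diagram $\wedge_{U/K}$ replaced by $\wedge_{n/(K,l)}$. Concretely: choosing representative cocycles for generators of $H^1(T^n/T(\s);\Q)$ yields compatible quasi-isomorphisms $\wedge(\s,l) \to \APL(T^n/T(\s))$ as $\s$ ranges over the faces of $K$, assembling into an objectwise weak equivalence $\wedge_{n/(K,l)} \to \APL \circ T^{n/(K,l)}$ of $\cat{cat}^{op}(K)$-diagrams. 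The diagram $\wedge_{n/(K,l)}$ is not Reedy fibrant, but as in \eqref{uskuk} it receives an objectwise weak equivalence from a Reedy fibrant diagram $\wedge(X)\otimes S_K(Y)$ built from \eqref{btnmod} (with differential $dx_i$ equal to the image of $y_i$). Since $\APL$ (equivalently $A^*$) carries colimits to limits \cite[\S13.5]{bo-gu76} and $\lim$ is right Quillen on Reedy fibrant diagrams, one obtains a zig-zag of quasi-isomorphisms relating $\lim(\wedge(X)\otimes S_K(Y))$, $\lim(\APL\circ T^{n/(K,l)})$, and $\APL(M[l])$, exactly parallel to \eqref{alim}. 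Finally Proposition~\ref{hotolim} provides a quasi-isomorphism $\lim(\wedge(X)\otimes S_K(Y)) \xrightarrow{\;\simeq\;} \holim \wedge_{n/(K,l)}$, completing the chain.

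The step I expect to require the most care is verifying that the colimit of the upper row of \eqref{bladder}, specialised to $B = \djs(K)$, really is a legitimate algebraic model for $\APL(M[l])$ --- in particular that $H^*(\djs(K))/(Y) \to H^*(M[l])$ is an isomorphism. This is the point where freeness of $H^*(\djs(K))$ over $S(Y)$ is used (through the collapse of the Eilenberg--Moore spectral sequence, cf.~\cite[Lemma~2.1]{ma-pa06}), and where the \lsop\ hypothesis is genuinely needed: without it the quotient $H^*(B)/(Y)$ would not compute $H^*(E[l])$ and formality could fail. Once Proposition~\ref{fromfib} is in hand this is already done, so in practice the corollary is essentially a bookkeeping exercise; the only genuinely new content is the translation of the homotopy-colimit description of $M[l]$ (via \cite[Prop.~5.3]{w-z-z99} and \cite[Prop.~5.1]{p-r-v04}) into the $\holim\wedge_{n/(K,l)}$ statement, which follows the template of Theorem~\ref{zkmodel1} verbatim.
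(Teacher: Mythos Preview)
Your treatment of formality is correct and matches the paper: apply Proposition~\ref{fromfib} with $B=\djs(K)$, using formality of $\djs(K)$ from~\cite{no-ra05} and freeness of $\Q[K]$ over $S(Y)$ via Cohen--Macaulayness.

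For the $\holim$ statement, however, your argument has a genuine gap. You invoke the analogue of \eqref{alim} for the diagram $T^{n/(K,l)}$, claiming a zig-zag linking $\lim(\APL\circ T^{n/(K,l)})$ to $\APL(M[l])$. But \eqref{alim} works for $\zk$ because $\zk=\colim D^K$ and $A^*$ converts colimits to limits. The diagram $T^{n/(K,l)}$ is \emph{not} Reedy cofibrant (the paper says so explicitly), and $M[l]$ is only its homotopy colimit, not its colimit. Indeed, for $K=\partial\Delta^1$ the colimit of $T^{n/(K,l)}$ is the pushout $*\leftarrow T^1\rightarrow *$, a point, whereas $M[l]\simeq\C P^1$. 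So $\lim(\APL\circ T^{n/(K,l)})$ need not model $\APL(M[l])$ at all, and your zig-zag breaks.

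The paper sidesteps this entirely. Rather than passing through $\APL$ of a topological diagram, it extracts from the proof of Proposition~\ref{fromfib} (specifically \eqref{zzey}) a direct weak equivalence between $\APL(M[l])$ and the explicit algebra $(\wedge(X)\otimes\Q[K],d)$ with $dx_i=l_i$. It then observes that $\wedge(X)\otimes\Q[K]\cong\lim(\wedge(X)\otimes S_K(Y))$ as an honest isomorphism (limits are created in $\cat{coch}$, and \eqref{srlim} applies), and finishes with Proposition~\ref{hotolim} exactly as you do. Your route can be repaired by replacing $T^{n/(K,l)}$ with the Reedy cofibrant diagram $D^K/T[l]$ (the free $T[l]$-quotient of $D^K$), whose colimit genuinely is $M[l]$; but as written the key link to $\APL(M[l])$ is missing.
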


\begin{proof}
The formality of $\APL(M[l])$ follows by applying Proposition
\ref{fromfib} to \eqref{qtn}, and \eqref{zzey} confirms that it is
weakly equivalent to $(\wedge(X)\otimes \Q[K],d)$ in $\cat{cdga}$,
where $dx_i=l_i$ for $1\leq i\leq n$. Moreover, the natural map
\[
\wedge(X)\otimes\Q[K]\longrightarrow\lim(\wedge(X)\otimes S_K(Y))
\]
is an isomorphism, using \eqref{srlim} and the fact that limits are
created in $\cat{coch}$.

Objectwise projection induces a weak equivalence $\wedge(X)\otimes
S_K(Y)\stackrel{\simeq}{\longrightarrow}\wedge_{n/(K,l)}$ in
$\fcat{cat$^{op}(K)$}{cdga}$, whose source is Reedy fibrant; so
Proposition \ref{hotolim} provides a quasi-isomorphism
$\lim(\wedge(X)\otimes S_K(Y))\to\holim\wedge_{n/(K,l)}$. The necessary
zig-zag
\[
\holim\wedge_{n/(K,l)}\lzze\APL(M[l])
\]
in $\cat{cdga}$ is then complete.
\end{proof}

So we have proven the following analogue of Proposition \ref{firstinvt}.
\begin{proposition}\label{thirdinvt}
  As a functor $\cat{top}\to\cat{cdga}$, rational cohomology maps
  homotopy colimits to homotopy limits on diagrams $T^{n/(K,l)}$.
\end{proposition}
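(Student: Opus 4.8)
The plan is to read off the proposition as a summary of the work already contained in Proposition~\ref{fromfib} and Corollary~\ref{toricformal}, in exactly the way that Proposition~\ref{firstinvt} summarised Section~\ref{modjs}. First I would pin down what the statement asserts. Applying the contravariant functor $H^*(-;\Q)\colon\cat{top}\to\cat{cdga}$ objectwise to the $\cat{cat}(K)$-diagram $T^{n/(K,l)}$ produces the $\cat{cat}^{op}(K)$-diagram $\wedge_{n/(K,l)}$, equipped with zero differential: indeed $H^*(T^n/T(\s);\Q)=\wedge(\s,l)\otimes\Q$, and for $\s\subseteq\tau$ the quotient $T^n/T(\s)\to T^n/T(\tau)$ induces precisely the inclusion $\wedge(\tau,l)\to\wedge(\s,l)$. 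So, since $\hocolim T^{n/(K,l)}\simeq M[l]$ by \cite[Prop.~5.3]{w-z-z99} together with \cite[Prop.~5.1]{p-r-v04}, the claim is that $H^*(M[l];\Q)$ is weakly equivalent in $\cat{cdga}$ to $\holim\wedge_{n/(K,l)}$.

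The argument then combines the two outputs of Corollary~\ref{toricformal}. Its proof, via Proposition~\ref{fromfib}, establishes that $M[l]$ is formal, hence provides a zig-zag of quasi-isomorphisms $H^*(M[l];\Q)\lzze\APL(M[l])$ in $\cat{cdga}$; it also establishes the weak equivalence $\APL(M[l])\lzze\holim\wedge_{n/(K,l)}$. Splicing these gives
\[
  H^*(M[l];\Q)\lzze\holim\wedge_{n/(K,l)}
\]
in $\cat{cdga}$. Since rational cohomology preserves weak equivalences, this exhibits $H^*(-;\Q)$ as carrying the homotopy colimit $\hocolim T^{n/(K,l)}$ to the homotopy limit $\holim(H^*(-;\Q)\circ T^{n/(K,l)})$, which is the assertion.

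The only place where the earlier machinery is genuinely needed — and the point to treat with care — is the interpretation of $\holim\wedge_{n/(K,l)}$: the diagram $\wedge_{n/(K,l)}$ is not Reedy fibrant, so its homotopy limit cannot be computed as the ordinary limit but must be taken through a fibrant approximation. This is already handled inside the proof of Corollary~\ref{toricformal}, which supplies the Reedy fibrant diagram $\wedge(X)\otimes S_K(Y)$, the objectwise weak equivalence $\wedge(X)\otimes S_K(Y)\stackrel{\simeq}{\longrightarrow}\wedge_{n/(K,l)}$, and the identification $\holim\wedge_{n/(K,l)}\simeq\lim(\wedge(X)\otimes S_K(Y))\cong(\wedge(X)\otimes\Q[K],d)$ obtained from Proposition~\ref{hotolim}. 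Given that, I do not expect any substantive obstacle: the remainder is routine bookkeeping with zig-zags of quasi-isomorphisms in $\cat{cdga}$.
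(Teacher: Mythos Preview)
Your proposal is correct and matches the paper exactly: the proposition is stated there without a separate proof, introduced by ``So we have proven the following analogue of Proposition~\ref{firstinvt}'', and your argument is precisely the unpacking of that sentence via Corollary~\ref{toricformal}. The care you take over the Reedy fibrant replacement $\wedge(X)\otimes S_K(Y)$ for $\wedge_{n/(K,l)}$ is likewise just what the proof of Corollary~\ref{toricformal} supplies.
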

Proposition \ref{thirdinvt} also extends to the integral setting; in
particular, $H^*(M[l];R)$ is isomorphic to $R[K]/(L)$ for all
coefficient rings $R$.

Similar arguments apply more generally to \emph{torus manifolds} over
\emph{homology polytopes} ~\cite{ma-pa06}, and even to arbitrary torus
manifolds with zero odd dimensional cohomology. In the latter case,
$\Z[K]$ is replaced by the face ring $\Z[\mathcal S]$ of an
appropriate \emph{simplicial poset} $\mathcal S$. This also admits a
linear system of parameters $l$ and is free over $\Z[L]$, so
Proposition~\ref{fromfib} again establishes formality. Non-singular
compact toric varieties are included in this framework, although the
formality of projective examples follows immediately from the fact
that they are K\"ahler.

%
%
%
%
%
%
%
%
%
%%%%%%%%%%%%%%%%%%%%%%%%%%%%%%%%%%%%%%%%%%%%%%%%%

\section{Models for loop spaces: arbitrary $K$}\label{mlsak}

{\sloppy We now turn our attention to algebraic models for the
Moore loop spaces $\varOmega\djs(K)$, $\varOmega\zk$ and
$\varOmega M$, where $K$ is an arbitrary simplicial complex and
$M$ a quasitoric manifold. Since composition of Moore loops is
strictly associative, these spaces are topological monoids with
identity, and are of independent interest to homotopy theorists.
Their properties are considerably simplified when $K$ satisfies
the requirements of a flag complex, but we postpone discussion of
this situation until the following section.

}
Geometric models for $\varOmega M$ \cite{cartphd07} are
currently under development.

Following \cite{p-r-v04}, we loop the fibrations of Sections
\ref{mozk} and \ref{moqtm} to obtain fibrations
\begin{equation}\label{splitfibz}
\varOmega\zk\longrightarrow\varOmega\djs(K)\longrightarrow T^V
\sands
\varOmega M\longrightarrow\varOmega\djs(K)
\stackrel{\varOmega l}{\llongrightarrow}T^n.
\end{equation}
Each of these admits a section, defined by the $m$ generators of
$\pi_2(\djs(K))\cong\Z^V$ and by the duals of $l_1$, \dots, $l_n$
respectively, and are therefore split in $\cat{top}$. So we have
homotopy equivalences
\[
\varOmega\djs(K)\stackrel{\simeq}{\longrightarrow}\varOmega\zk\times
T^V
\sands
\varOmega\djs(K)\stackrel{\simeq}{\longrightarrow}\varOmega M\times
T^n,
\]
which do \emph{not} preserve $H$-space structures. We therefore obtain
exact sequences of Pontrjagin algebras
\begin{equation}\label{paseq}
0\longrightarrow H_*(\varOmega S;R)\longrightarrow
H_*(\varOmega\djs(K);R)\longrightarrow\wedge(W_S)\longrightarrow 0,
\end{equation}
where $W_S=U$ or $n$ as $S=\zk$ or $M$; these do not usually split as
algebras.

Our models rely heavily on the loop and classifying functors
$\varOmega_*$ and $B_*$ of Subsection \ref{adpa}. For any
simply-connected CW-complex $X$, the reduced singular chain complex
$C_*(X;R)$ is an object of $\cat{dgc}$ under the Alexander-Whitney
diagonal, and Adams \cite{adam56} provides a chain equivalence
$\varOmega_*C_*(X;R)\to C_*(\varOmega X;R)$.

When $R$ is $\Q$, the source of the adjunction \eqref{barcobar} is a
full subcategory of a model category, and its target is model.
\begin{proposition}\label{bcqpa}
The loop functor $\varOmega_*$ preserves cofibrations of connected
coalgebras and weak equivalences of simply connected coalgebras; the
classifying functor $B_*$ preserves fibrations of connected algebras
and all weak equivalences.
\end{proposition}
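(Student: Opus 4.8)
The plan is to prove the four assertions one at a time, using the explicit descriptions of Subsection~\ref{adpa} --- $\varOmega_*C=T_R(s^{-1}\oline C)$ with the cobar differential, and $B_*A$ the normalised bar construction, paired as in~\eqref{barcobar} --- together with the fact that cofibrations in $\cat{dga}_R$ include the elementary free extensions of Subsection~\ref{dga}, and fibrations in $\cat{dgc}$ the elementary cofree co-extensions $C\star T\br{dW}\to C$ of Subsection~\ref{dgc}. First I would treat $\varOmega_*$ on cofibrations. Cofibrations of connected dg coalgebras are precisely the monomorphisms, so let $f\colon C\to D$ be one. Since $D$ is connected its coradical filtration is exhaustive, and over $\Q$ one may choose nested graded complements $N_{(p)}$ to $f(\oline C_{(p)})$ inside $\oline D_{(p)}$. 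This presents $\varOmega_*D$ as the free product of $\varOmega_*C$ with the tensor algebra on $s^{-1}\bigcup_pN_{(p)}$, and the cobar differential is upper-triangular for the resulting filtration, because the reduced coproduct of $N_{(p)}$ strictly lowers conilpotency degree and so lands in $\varOmega_*C\star T(s^{-1}N_{(p-1)})$. Hence $\varOmega_*f$ is a composite (in general countable) of the elementary free extensions of Subsection~\ref{dga}, so a cofibration; the only care needed is to arrange the complements so that the internal part of the differential stays within each stage.

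Dually, I would treat $B_*$ on fibrations. Fibrations in $\cat{dga}_R$ are exactly the epimorphisms, so let $g\colon A\to B$ be surjective; filtering $B_*A$ by tensor length --- equivalently, filtering the kernel of $g$ coradically inside the bar construction --- makes the bar differential triangular and exhibits $B_*g$ as an iterated pullback of the generating fibrations $C\star T\br{dW}\to C$ of Subsection~\ref{dgc}, hence a fibration in $\cat{dgc}$. For the weak equivalences of $B_*$, I would run the spectral sequence of the tensor-length filtration of $B_*A$: its differential is an internal part plus a multiplication part that \emph{lowers} tensor length, so $E_0$ is $(s\oline A)^{\otimes p}$ with internal differential, and $E_1$ is the bar complex of $H(A)$ by Künneth over $\Q$. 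Because $A$ is connected, the suspension $s$ raises degree by one, so $(s\oline A)^{\otimes p}$ sits in degrees $\ge 2p$; the filtration is therefore bounded in each total degree, the spectral sequence converges, and any quasi-isomorphism $A\to B$ induces an isomorphism on $E_1$ and hence on $H(B_*-)$.

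Finally, $\varOmega_*$ on weak equivalences: the same tensor-length machinery applies to $\varOmega_*C$, now with $E_0=(s^{-1}\oline C)^{\otimes p}$ and $E_1$ the cobar complex of $H(C)$. Here the crucial --- and, I expect, the main obstacle --- is convergence: the desuspension $s^{-1}$ \emph{lowers} degree, so without a hypothesis $(s^{-1}\oline C)^{\otimes p}$ meets a fixed total degree for infinitely many $p$ and the filtration is unbounded. Requiring $C$ to be simply connected puts $\oline C$ in degrees $\ge 2$, hence $(s^{-1}\oline C)^{\otimes p}$ in degrees $\ge p$, restoring boundedness in each degree; then the comparison of spectral sequences applies, and a quasi-isomorphism of simply connected coalgebras induces an isomorphism on $E_1$ and so on $H(\varOmega_*-)$. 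This failure of convergence in the merely connected case is exactly what forces the restriction in the statement; apart from it, the one other delicate point is making precise, as above, that the bar construction of a surjection is one of the implicitly-defined fibrations of $\cat{dgc}$.
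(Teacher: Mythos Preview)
Your treatment of the two weak-equivalence assertions is correct and is exactly what the paper does: both invoke the Eilenberg--Moore (bar/cobar) spectral sequence, and both note that the decreasing filtration on $\varOmega_*$ forces the simply-connected hypothesis for convergence.

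For the cofibration and fibration assertions your route diverges from the paper's. You argue constructively, trying to exhibit $\varOmega_*f$ and $B_*g$ as cellular (transfinite composites of the sample extensions in Subsections~\ref{dga} and~\ref{dgc}). The paper instead argues by adjointness: it first shows, using the splitting theorem of Husemoller--Moore--Stasheff~\cite[Theorem~IV.2.5]{h-m-s74}, that $B_*$ carries every \emph{acyclic} fibration $p$ to an acyclic fibration (indeed $B_*A_1\cong B_*A_2\star B_*\ker p$); the left lifting property for $\varOmega_*i$ against $p$ then transports across the adjunction~\eqref{barcobar} to the lifting property for $i$ against $B_*p$, which holds because $i$ is a monomorphism. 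A second, dual use of adjointness then gives $B_*$ on fibrations. This packages both statements into one structural input and avoids any explicit filtration.

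Your direct argument for $\varOmega_*$ on cofibrations is essentially sound, but the coradical filtration is unnecessary and introduces the very difficulty you flag (compatibility with the internal differential). Since the cobar differential is \emph{homological}, it suffices to split $\oline D=f(\oline C)\oplus N$ over $\Q$, observe $\varOmega_*D\cong\varOmega_*C\star T(s^{-1}N)$ as graded algebras, and well-order a basis of $s^{-1}N$ by degree: then $d$ of each new generator automatically lies in the subalgebra already built, exhibiting $\varOmega_*f$ as a transfinite composite of the extensions of Subsection~\ref{dga}.

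Your argument for $B_*$ on fibrations, however, has a genuine gap. You assert that $B_*g$ is an iterated pullback of the projections $C\star T\br{dW}\to C$ of Subsection~\ref{dgc}, but those are only \emph{examples} of fibrations, not a generating class, and the cofree factor there is of the very special form $T\br{dW}$ (cogenerated by boundaries). Writing $B_*A\cong B_*B\star T\br{sK}$ as graded coalgebras does not put the projection into that form, and ``triangularity of the bar differential under the tensor-length filtration'' does not by itself certify the right lifting property against acyclic monomorphisms in the Getzler--Goerss structure. You would need a separate argument that every semi-cofree projection is a fibration in $\cat{dgc}$; the paper sidesteps this entirely via adjointness and the~\cite{h-m-s74} splitting.
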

\begin{proof}
The fact that $B_*$ and $\varOmega_*$ preserve weak equivalences of
algebras and simply connected coalgebras respectively is
proved by standard arguments with the Eilenberg-Moore spectral
sequence \cite[page~538]{f-h-t92}. The additional assumption for
coalgebras is necessary to ensure that the cobar spectral sequence
converges, because the relevant filtration is decreasing.

Given any cofibration $i\colon C_1\to C_2$ of connected coalgebras,
we must check that $\varOmega_*i\colon\varOmega_* C_1\to\varOmega_* C_2$
satisfies the left lifting property with respect to any acyclic
fibration $p\colon A_1\to A_2$ in $\cat{dga}_R$. This involves
finding lifts $\varOmega_*C_2\to A_1$ and $C_2\to B_*A_1$ in the
respective diagrams
\[
\begin{CD}
  \varOmega_*C_1 @>>> A_1\\
  @V{\varOmega_*i}VV @VVpV\\
  \varOmega_*C_2 @>>> A_2
\end{CD}\spandsp
\begin{CD}
  C_1 @>>> B_*A_1\\
  @ViVV @VV{B_*p}V\\
  C_2 @>>> B_*A_2
\end{CD}\quad\quad;
\]
each lift implies the other, by adjointness. Since $p$ is an acyclic
fibration, its kernel $A$ satisfies $H(A)\cong \Q$. Moreover, the
projection $B_*p$ splits by~\cite[Theorem~IV.2.5]{h-m-s74}, so
$B_*A_1$ is isomorphic to the cofree product $B_*A_2\star B_*A$. In
this case $B_*p$ is an acyclic fibration in $\cat{dgc}_{0,R}$, and our
lift is assured.

A second application of adjointness shows that $B_*$ preserves all
fibrations of connected algebras.
\end{proof}

\begin{remarks}
It follows from Proposition \ref{bcqpa} that the restriction
of~\eqref{barcobar} to
\[
\varOmega_*\colon\cat{dgc}_{1,R}\rla\cat{dga}_{0,R}:\! B_*
\]
acts as a Quillen pair, and induces an adjoint pair of equivalences on
appropriate full subcategories of the homotopy categories. An example
is given in~\cite[p.~538]{f-h-t92} which shows that $\varOmega_*$
fails to preserve quasi-isomorphisms (or even acyclic cofibrations) if
the coalgebras are not simply connected.
\end{remarks}

We now focus on the situation when $X$ is $\djs(K)$. By dualising the
formality results of \cite[Theorem 4.8]{no-ra05}, we obtain a
zig-zag of quasi-isomorphisms
\begin{equation}\label{cdgam}
  C_*(\djs(K);\Q)\lzze Q\br{K}.
\end{equation}
in $\cat{dgc}$. Since $\varOmega_*$ preserves quasi-isomorphisms,
\eqref{cdgam} combines with Adams's results \cite{adam56} to determine
an isomorphism
\begin{equation}\label{loopcotor}
 H(\varOmega_*\Q\br{K},d)\;\letbe\;
 \mathop{\mathrm{Cotor}}\nolimits_{\Q\langle K\rangle}(\Q,\Q)
 \stackrel{\cong}{\longrightarrow}H_*(\varOmega\djs(K);\Q)
\end{equation}
of graded algebras. Our first model for $\varOmega\djs(K)$ is
therefore $\varOmega_*\Q\br{K}$ in $\cat{dga}$.

The graded algebra underlying $\varOmega_*R\br{K}$ is the tensor
algebra $T(s^{-1}\,\overline{\!R}\br{K})$ on the desuspended
$R$-module $\,\overline{\!R}\br{K}$; the differential is defined on
generators by
\[
  d(s^{-1}v\br{\s})=
  \sum_{\s=\tau\sqcup\tau';\;\tau,\,\tau'\ne\varnothing}
  s^{-1}v\br{\tau}\otimes s^{-1}v\br{\tau'},
\]
because $d=0$ on $R\br{K}$. For future purposes it is convenient
to write $s^{-1}v{\br\s}$ as $\chi_\s$ for any face $\sigma\in K$, and
to use the abbreviations
\begin{equation}\label{defchis}
\chi_i\letbe\chi_{v_i}\sands\chi_{ij}\letbe\chi_{\{v_i,v_j\}}\quad
\text{for all}\quad 1\leq i,j\leq m.
\end{equation}

Our aim is to construct models that satisfy algebraic analogues
of~\cite[Theorem~7.17]{p-r-v04}. This asserts the existence of a
commutative diagram
\begin{equation}\label{prvdiag}
\begin{CD}
  \varOmega\hocolim^{\scat{top}}BT^K @>\overline{h}_K>>
  \hocolim^{\scat{tmon}}T^K\\ @VV\varOmega p_KV @VVV\\
  \varOmega\djs(K) @>h_K>> \colim^{\scat{tmon}}T^K
\end{CD},
\end{equation}
in $\Ho(\cat{tmon})$, where $\varOmega p_K$ and $\overline{h}_K$ are
homotopy equivalences for any $K$, and $h_K$ is a homotopy equivalence
when $K$ is flag. We interpret $T^K$ as a diagram of topological
monoids, rather than of topological spaces as in Proposition
\ref{firstinvt}.

Our previous algebraic models for $T^K$ have been commutative,
contravariant, and cohomological, but to investigate \eqref{prvdiag}
we introduce models that are covariant and homological. They involve
the diagrams
\[
\wedge^K(U)\colon\cat{cat}(K)\to\cat{dga}\sands
\cl^K(U)\colon\cat{cat}(K)\to\cat{dgl},
\]
which assign to $\s\subseteq\tau$ the monomorphisms
$\wedge(\s)\to\wedge(\tau)$ and $\cl(\s)\to\cl(\tau)$, of exterior
algebras and commutative Lie algebras respectively, on
$1$--dimensional generators with zero differentials. Thus
$\wedge^K(U)$ may be identified with the diagram $H_*(T^K;\Q)$ of
Pontrjagin rings, and $\cl^K(U)$ with its diagram
$\pi_*(T^K)\otimes_\Z\Q$ of primitives. For $BT^K$, we consider the
diagram $S^K\br{V}\colon\cat{cat}(K)\to\cat{dgc}$, which assigns to
$\s\subseteq\tau$ the monomorphism $S\br{\s}\to S\br{\tau}$ of
coalgebras, on $2$--dimensional generators with zero differential. It
may be identified with the diagram $H_*(BT^K;\Q)$ of homology
coalgebras.

The individual algebras and coalgebras in these diagrams are all
symmetric, but the context demands they be interpreted in the
non-commutative categories; this is especially important when forming
limits and colimits.

\begin{proposition}\label{symwe}
\sloppy There are acyclic fibrations
$\varOmega_*S\br{V}\to\wedge(U)$ in $\cat{dga}$ and
$L_*S\br{V}\to\cl(U)$ in $\cat{dgl}$, for any set of vertices $V$.
\end{proposition}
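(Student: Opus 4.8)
The plan is to analyse the cobar construction $\varOmega_*S\br{V}$ directly, using the fact that $S\br{V}$ is a symmetric coalgebra on $2$--dimensional generators. First I would describe the target explicitly: $\varOmega_*S\br{V}$ has underlying algebra the tensor algebra $T(s^{-1}\oline{S\br{V}})$, and on the degree--$1$ generators $\chi_i=s^{-1}v\br{i}$ the differential vanishes (there is no way to split a single vertex into two nonempty faces), while on higher generators $s^{-1}v\br{\s}$ the differential records the coproduct \eqref{srdiag}. The natural candidate map $\varepsilon\colon\varOmega_*S\br{V}\to\wedge(U)$ should send each $\chi_i$ to the exterior generator $u_i$ and kill every higher generator $s^{-1}v\br{\s}$ with $|\s|\geq 2$. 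One checks this is a chain map: the image of $d(s^{-1}v\br{\s})$ is a sum of products $\chi_\tau\otimes\chi_{\tau'}$, and since the generators $\chi_i$ square to zero in $\wedge(U)$ (by the Koszul sign rule, as these are odd classes) and the anticommutation of distinct generators matches the sign in \eqref{srdiag}, the total lands in $(u_iu_j+u_ju_i)=0$. So $\varepsilon$ is a well-defined morphism of $\cat{dga}$.

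\textbf{Acyclicity.} The key computational point is that $\varepsilon$ is a quasi-isomorphism. One clean way is to recognise $\varOmega_*S\br{V}$, for a single vertex set, as computing $\mathop{\mathrm{Cotor}}$ of a polynomial coalgebra, i.e.\ $\mathop{\mathrm{Cotor}}_{S\br{V}}(\Q,\Q)\cong\Ext_{S(V)}(\Q,\Q)\cong\wedge(U)$ by \eqref{cotorext} together with the classical Koszul resolution computing $\Ext$ over a polynomial ring. Alternatively, and more self-containedly, I would filter $\varOmega_*S\br{V}$ by tensor length and identify the $E_1$-page of the resulting spectral sequence with the Koszul complex of $S(V)$; since $S(V)$ is a polynomial ring on the $v_i$, its Koszul homology is exactly $\wedge(U)$ concentrated appropriately, giving the isomorphism on homology and confirming $\varepsilon$ induces it. Either route gives that $\varepsilon$ is a quasi-isomorphism.

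\textbf{Fibration.} Since $\varOmega_*S\br{V}$ and $\wedge(U)$ are connected, fibrations in $\cat{dga}$ are surjections, and $\varepsilon$ is visibly surjective (each $u_i$ is hit by $\chi_i$). Hence $\varepsilon$ is an acyclic fibration. For the Lie-algebraic statement, I would run the parallel argument with $L_*S\br{V}$, whose underlying graded Lie algebra is the free Lie algebra $\fl(s^{-1}\oline{S\br{V}})$ sitting inside $\varOmega_*S\br{V}$ as the primitives (cf.\ Subsection \ref{dgl}); the map $\varepsilon$ restricts to $\fl(s^{-1}\oline{S\br{V}})\to\cl(U)$, sending each degree--$1$ generator $\chi_i$ to the corresponding commutative generator and killing everything else, including all brackets $[\chi_i,\chi_j]$ (which are primitive but of tensor length $2$). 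One checks this is a map of differential graded Lie algebras, is surjective onto $\cl(U)$, and is a quasi-isomorphism — the last point because $H(L_*S\br{V})$ is the Lie algebra of primitives of $H(\varOmega_*S\br{V})=\wedge(U)$, which is precisely the abelian Lie algebra $\cl(U)$ spanned by the $u_i$ (all Samelson brackets vanish in an exterior algebra on odd generators). Since fibrations in $\cat{dgl}$ are also the surjections, this gives the acyclic fibration $L_*S\br{V}\to\cl(U)$.

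\textbf{Main obstacle.} The genuinely substantive step is the acyclicity computation, i.e.\ identifying $H(\varOmega_*S\br{V})$ with $\wedge(U)$. Everything else — that $\varepsilon$ is a well-defined chain algebra (resp.\ Lie algebra) map and that it is surjective — is a routine check with signs. I expect to handle the acyclicity either by citing the $\mathop{\mathrm{Cotor}}$/$\Ext$ identification \eqref{cotorext} and the Koszul resolution of $\Q$ over $S(V)$, or by the length-filtration spectral sequence collapsing at $E_2$; the former is shorter and fits the paper's conventions.
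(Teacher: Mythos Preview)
Your proposal is correct and follows the same route as the paper: define the algebra map by $\chi_i\mapsto u_i$ (with higher generators $\chi_\sigma$ sent to zero), observe from the cobar differential that $u_i^2$ and $u_iu_j+u_ju_i$ are hit so the map is well-defined into $\wedge(U)$, note surjectivity gives a fibration, and pass to primitives for the $\cat{dgl}$ statement. The paper's proof is considerably terser than yours---it essentially asserts the quasi-isomorphism after displaying the relations $d\chi_{ii}=\chi_i\otimes\chi_i$ and $d\chi_{ij}=\chi_i\otimes\chi_j+\chi_j\otimes\chi_i$, leaving the homology identification $H(\varOmega_*S\br{V})\cong\wedge(U)$ implicit---whereas you spell out the $\mathrm{Cotor}/\Ext$ route via \eqref{cotorext} and the Koszul resolution, which is exactly the justification the paper presumes the reader will supply.
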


\begin{proof}
  Using \eqref{defchis}, we define the first map by $\chi_i\mapsto
  u_i$ for $1\le i\le m$. Because
\begin{equation}\label{cobarrel}
  d\chi_{ii}=\chi_i\otimes\chi_i \sands
  d\chi_{ij}=\chi_i\otimes\chi_j+\chi_j\otimes\chi_i\quad
  \text{for}\quad i\ne j
\end{equation}
hold in $\varOmega_*S\br{V}$, the map is consistent with the exterior
relations in its target. So it is an epimorphism and quasi-isomorphism
in $\cat{dga}$, and hence an acyclic fibration. The corresponding
result for $\cat{dgl}$ follows by restriction to primitives.
\end{proof}

By Proposition \ref{symwe} there are acyclic fibrations
$\varOmega_*S\br{\s}\to\wedge(\sigma)$ for every face $\s\in K$. These
induce an acyclic Reedy fibration
$e\colon\varOmega_*S^K\br{V}\to\wedge^K(U)$, and therefore a
commutative diagram
\begin{equation}\label{cda}
\begin{CD}
\hocolim^{\scat{dga}}\varOmega_*S^K\br{V}@>\simeq>>
\hocolim^{\scat{dga}}\wedge^K(U)\\
@VV\simeq V@VVV\\
\colim^{\scat{dga}}\varOmega_*S^K\br{V}@>>>
\colim^{\scat{dga}}\wedge^K(U)
\end{CD}
\end{equation}
in $\cat{dga}$. The left-hand arrow is a quasi-isomorphism by
Proposition \ref{hotolim}, because $\varOmega_*S^K\br{V}$ is Reedy
cofibrant; the upper arrow is a quasi-isomorphism by Remark
\ref{htpyinvce}, because $e$ is an equivalence.

We may now introduce our first algebraic model for diagram
\eqref{prvdiag}.
\begin{theorem}\label{hcldi}
There is a commutative diagram
\[
\begin{CD}
  \varOmega_*\hocolim^{\scat{dgc}}S_K\br{V} @>\overline{\eta}_K>>
  \hocolim^{\scat{dga}}\wedge^K(U)\\
  @VV\varOmega_*\rho_KV @VVV\\
  \varOmega_*\Q\langle K\rangle @>\eta_K>> \colim^{\scat{dga}}\wedge^K(U)
\end{CD},
\]
in $\Ho(\cat{dga})$, where $\varOmega_*\rho_K$ and $\overline{\eta}_K$
are isomorphisms for any $K$.
\end{theorem}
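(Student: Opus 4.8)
The plan is to build the diagram out of the acyclic Reedy fibration $e\colon\varOmega_*S^K\br{V}\to\wedge^K(U)$ supplied just before the statement, together with a cofibrant replacement of $S_K\br{V}$ in $\fcat{cat$(K)$}{cdgc}$ and the adjunction \eqref{barcobar}. First I would record that $S_K\br{V}$ is a $\cat{cat}(K)$-diagram in $\cat{cdgc}_0$ whose colimit is $\Q\langle K\rangle$ by \eqref{srlim}, and that it is \emph{not} Reedy cofibrant in general; choose a Reedy cofibrant approximation $\rho_K\colon S'_K\br{V}\to S_K\br{V}$, so that $\colim S'_K\br{V}\to\colim S_K\br{V}=\Q\langle K\rangle$ represents $\hocolim^{\scat{dgc}}S_K\br{V}\to\Q\langle K\rangle$. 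Applying the left Quillen functor $\varOmega_*$ (left Quillen by Proposition \ref{bcqpa}, since it preserves cofibrations of connected coalgebras) objectwise gives a $\cat{cat}(K)$-diagram $\varOmega_*S'_K\br{V}$ in $\cat{dga}$ that is still Reedy cofibrant, because $\varOmega_*$ preserves the latching cofibrations \eqref{diagcof}; and because $\varOmega_*$ commutes with colimits (it is a left adjoint) we get $\colim^{\scat{dga}}\varOmega_*S'_K\br{V}\cong\varOmega_*\colim^{\scat{dgc}}S'_K\br{V}=\varOmega_*\hocolim^{\scat{dgc}}S_K\br{V}$ and, applying $\colim$ to the objectwise weak equivalence $\rho_K$ and using Proposition \ref{hclhlpreswe}, a quasi-isomorphism onto $\varOmega_*\Q\langle K\rangle$. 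This identifies the left-hand vertical arrow $\varOmega_*\rho_K$ and shows it is an isomorphism in $\Ho(\cat{dga})$; in fact, since $S'_K\br{V}$ and $S_K\br{V}$ are objectwise simply connected coalgebras, $\varOmega_*\rho_K$ is already an objectwise quasi-isomorphism by Proposition \ref{bcqpa}, so it is a genuine weak equivalence, not merely a homotopy-categorical isomorphism.

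Next I would construct $\overline{\eta}_K$. Composing the Reedy cofibrant approximation $\varOmega_*\rho_K$ with the acyclic Reedy fibration $e\colon\varOmega_*S^K\br{V}\to\wedge^K(U)$ — after identifying $S^K\br{V}$ with $S_K\br{V}$ as the same underlying $\cat{cat}(K)$-diagram of coalgebras (they differ only in how morphisms are recorded, but the objectwise coalgebras and the inclusions $S\br{\s}\to S\br{\tau}$ agree) — and using the lifting property in $\fcat{cat$(K)$}{dga}$ gives an objectwise quasi-isomorphism $\varOmega_*S'_K\br{V}\to\wedge^K(U)$ of Reedy cofibrant diagrams. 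Taking colimits and invoking Proposition \ref{hclhlpreswe} and diagram \eqref{cda} yields the quasi-isomorphism $\overline{\eta}_K\colon\varOmega_*\hocolim^{\scat{dgc}}S_K\br{V}\to\hocolim^{\scat{dga}}\wedge^K(U)$; it is an isomorphism in $\Ho(\cat{dga})$ for any $K$ because every map in sight is a quasi-isomorphism of cofibrant objects, so $\colim$ preserves it. The map $\eta_K\colon\varOmega_*\Q\langle K\rangle\to\colim^{\scat{dga}}\wedge^K(U)$ is then forced: it is the map on colimits induced by the compatible family of acyclic fibrations $\varOmega_*S\br{\s}\to\wedge(\s)$, postcomposed with $\colim\varOmega_*\rho_K^{-1}$ where needed; equivalently, it is $\colim(e)\circ(\varOmega_*\rho_K)$ read at the level of ordinary colimits. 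Commutativity of the square is then just the naturality of the transformation "$\hocolim\to\colim$" (Proposition \ref{hotolim}, or more precisely \ref{hclhlpreswe}) applied to the objectwise weak equivalence $\varOmega_*S'_K\br{V}\to\wedge^K(U)$, together with the fact that all four corners are built functorially from that single map of diagrams.

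The main obstacle is bookkeeping rather than conceptual: one must be careful that $\varOmega_*$, although a left adjoint on the underlying categories, genuinely sends a Reedy cofibrant $\cat{cat}(K)$-diagram to a Reedy cofibrant one, i.e. that it commutes with the latching colimits $L_\sigma(-)=\colim(-)|_{\cat{cat}(\partial(\sigma))}$ and preserves the cofibrations \eqref{diagcof}. This follows because $\varOmega_*$ preserves colimits and preserves cofibrations of connected coalgebras (Proposition \ref{bcqpa}), so it carries $C(\sigma)\amalg_{L_\sigma C}L_\sigma D\to D(\sigma)$ to $\varOmega_*C(\sigma)\amalg_{L_\sigma\varOmega_*C}L_\sigma\varOmega_*D\to\varOmega_*D(\sigma)$; but one should check that the diagrams $S'_K\br{V}$ can be chosen with each $S'_K\br{\s}$ \emph{connected} so that Proposition \ref{bcqpa} applies — this is automatic since cofibrant approximations in $\cat{cdgc}_0$ stay in $\cat{cdgc}_0$, the connected objects forming the ambient category. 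A secondary point, worth a sentence, is the identification of $S_K\br{V}$ with $S^K\br{V}$: the former is nominally a $\cat{cat}(K)$-diagram while \eqref{cda} uses $\varOmega_*S^K\br{V}$; these are literally the same data, and the reader who has followed Section \ref{modjs} will accept the identification, but it deserves an explicit remark. Everything else — that $\colim$ of quasi-isomorphisms of cofibrant diagrams is a quasi-isomorphism, that $\varOmega_*$ of a weak equivalence of simply connected coalgebras is a weak equivalence, that the square commutes by naturality — is immediate from the results already assembled.
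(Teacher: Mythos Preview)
Your argument works, but it is built on a false premise that makes it longer than necessary: the diagram $S^K\br{V}$ (which you write as $S_K\br{V}$) \emph{is} already Reedy cofibrant in $\cat{dgc}$, as noted just before \eqref{srhlim}. Cofibrations in $\cat{dgc}$ are monomorphisms and colimits are created in $\cat{ch}$, so the latching map at $\sigma$ is simply the inclusion of the submodule spanned by monomials supported on proper subfaces. The paper's proof exploits this directly: Proposition~\ref{hotolim} gives a weak equivalence $\rho_K\colon\hocolim^{\scat{dgc}}S^K\br{V}\to\colim^{\scat{dgc}}S^K\br{V}=\Q\br{K}$, and since $\varOmega_*$ is a left adjoint one has the isomorphism $\varOmega_*\Q\br{K}\cong\colim^{\scat{dga}}\varOmega_*S^K\br{V}$ of \eqref{cdb}. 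Amalgamating \eqref{cdb} with the square \eqref{cda} and passing to $\Ho(\cat{dga})$ yields the theorem immediately, with $\overline{\eta}_K$ realised as a short zig-zag rather than via a lifting. Your replacement $S'_K\br{V}$ and the accompanying lifting argument are therefore redundant; you may take $\rho_K$ to be the canonical map itself.

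There is also a minor slip worth flagging: you assert that $\wedge^K(U)$ is Reedy cofibrant in $\cat{dga}$, but it is not. At a face $\sigma=\{i,j\}$ the latching object is the free product $\wedge(u_i)\star\wedge(u_j)$, and the latching map to $\wedge(u_i,u_j)$ is the quotient by $u_iu_j+u_ju_i$, which is not a cofibration. This is exactly why \eqref{cda} is needed --- the diagram $\varOmega_*S^K\br{V}$ serves as the Reedy cofibrant approximation to $\wedge^K(U)$. Fortunately your appeal to Proposition~\ref{hclhlpreswe} only requires the source to be cofibrant, so this error does no damage to the logic; but the claim should be removed.
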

\begin{proof}
Consider the diagram
\begin{equation}\label{cdb}
\varOmega_*\hocolim^{\scat{dgc}}S^K\br{V}\stackrel{\simeq}{\longrightarrow}
\varOmega_*\colim^{\scat{dgc}}S^K\br{V}\stackrel{\cong}{\longleftarrow}
\colim^{\scat{dga}}\varOmega_*S^K\br{V}
\end{equation}
in $\cat{dga}$. The first arrow is a quasi-isomorphism by Proposition
\ref{hotolim}, because $S^K\br{V}$ is Reedy cofibrant in $\cat{dgc}$
and $\varOmega_*$ preserves quasi-isomorphisms; the second arrow is an
isomorphism because $\varOmega_*$ is a left adjoint. So we may combine
\eqref{cdb} with the upper left hand corner of \eqref{cda} to obtain a
zig-zag
\[
\varOmega_*\hocolim^{\scat{dgc}}S^K\br{V}
\lzze\hocolim^{\scat{dga}}\varOmega_*S^K\br{V}
\stackrel{\simeq}{\longrightarrow}\hocolim^{\scat{dga}}\wedge^K(U)
\]
of weak equivalences in $\cat{dga}$, which we label
$\overline{\eta}_K$. The result follows by amalgamating diagrams
\eqref{cda} and \eqref{cdb}, rewriting $\colim^{\scat{dgc}}S^K\br{V}$
as $\Q\br{K}$ throughout, and passing to the homotopy category.
\end{proof}
By construction, $\eta_K$ is the fibration
\begin{equation}\label{etaK}
\varOmega_*\Q\br{K}\cong\varOmega_*\colim^{\scat{dgc}}S^K\br{V}
\to\colim^{\scat{dga}}\wedge^K(U)
\end{equation}
in $\cat{dga}$. Corollary~\ref{ldjcolim} and Theorem~\ref{djcoform} show
that $\eta_K$ is acyclic when $K$ is flag.

The following statement is proved similarly.
\begin{theorem}\label{liedi}
There is a commutative diagram
\[
\begin{CD}
  L_*\hocolim^{\scat{cdgc}}S_K\br{V} @>\overline{\lambda}_K>>
  \hocolim^{\scat{dgl}}\cl^K(U)\\
  @VVL_*\rho_KV @VVV\\
  L_*\Q\langle K\rangle @>\lambda_K>> \colim^{\scat{dgl}}\cl^K(U)
\end{CD},
\]
in $\Ho(\cat{dgl})$, where $L_*\rho_K$ and $\overline{\lambda}_K$ are
isomorphisms for any $K$.
\end{theorem}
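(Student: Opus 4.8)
The plan is to run the proof of Theorem~\ref{hcldi} verbatim in the differential graded Lie setting, substituting the functor $L_*\colon\cat{cdgc}_0\to\cat{dgl}$ of \eqref{lcadj} for the cobar functor $\varOmega_*$, the diagram $\cl^K(U)$ for $\wedge^K(U)$, and $\cat{dgl}$ (resp.\ $\cat{cdgc}$) for $\cat{dga}$ (resp.\ $\cat{dgc}$) throughout. The two functorial inputs needed are already available: Proposition~\ref{symwe} supplies the acyclic fibrations $L_*S\br{\s}\to\cl(\s)$ in $\cat{dgl}$ for each face $\s\in K$, and the theory behind \eqref{lcadj} (Quillen for the reduced/simply connected case, Neisendorfer in general) tells us that $L_*$ is a left adjoint, preserves cofibrations and acyclic cofibrations, and hence — by Ken Brown's Lemma — preserves weak equivalences between cofibrant objects.

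First I would check that $L_*S^K\br{V}$, with $S^K\br{V}$ now regarded as a $\cat{cat}(K)$-diagram in $\cat{cdgc}$, is Reedy cofibrant in $\fcat{cat$(K)$}{dgl}$: since $S^K\br{V}$ is Reedy cofibrant in $\cat{cdgc}$ and $L_*$ both preserves cofibrations and commutes with the colimits defining the latching functors, each canonical map $\colim(L_*S^K\br{V})(\partial(\s))\to L_*S\br{\s}$ is a cofibration. The objectwise acyclic fibrations of Proposition~\ref{symwe} then assemble into an acyclic Reedy fibration $e\colon L_*S^K\br{V}\to\cl^K(U)$, giving, exactly as in \eqref{cda}, a commutative square
\[
\begin{CD}
\hocolim^{\scat{dgl}}L_*S^K\br{V}@>\simeq>>\hocolim^{\scat{dgl}}\cl^K(U)\\
@VV\simeq V@VVV\\
\colim^{\scat{dgl}}L_*S^K\br{V}@>>>\colim^{\scat{dgl}}\cl^K(U)
\end{CD}
\]
whose top arrow is a weak equivalence by Remark~\ref{htpyinvce} and whose left arrow is one by Proposition~\ref{hotolim}. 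Next I would reproduce the analogue of \eqref{cdb},
\[
L_*\hocolim^{\scat{cdgc}}S^K\br{V}\stackrel{\simeq}{\longrightarrow}
L_*\colim^{\scat{cdgc}}S^K\br{V}\stackrel{\cong}{\longleftarrow}
\colim^{\scat{dgl}}L_*S^K\br{V},
\]
in which the left map is $L_*$ applied to the weak equivalence $\hocolim^{\scat{cdgc}}S^K\br{V}\to\colim^{\scat{cdgc}}S^K\br{V}$ of cofibrant objects (Proposition~\ref{hotolim}) and the right map is the canonical isomorphism coming from the fact that $L_*$ preserves colimits. Splicing this with the upper-left corner of the square produces a zig-zag of weak equivalences from $L_*\hocolim^{\scat{cdgc}}S^K\br{V}$ to $\hocolim^{\scat{dgl}}\cl^K(U)$, which I would take to be $\overline{\lambda}_K$; amalgamating everything, rewriting $\colim^{\scat{cdgc}}S^K\br{V}\cong\Q\br{K}$ by \eqref{srlim}, defining $\lambda_K$ as the resulting canonical map $L_*\Q\br{K}\to\colim^{\scat{dgl}}\cl^K(U)$, and passing to $\Ho(\cat{dgl})$ then yields the asserted square, with $L_*\rho_K$ and $\overline{\lambda}_K$ isomorphisms.

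The step I expect to require the most care is verifying that $L_*$ genuinely has the homotopical properties used above — in particular that it is left Quillen for Neisendorfer's model structure on $\cat{cdgc}_0$, so that $L_*S^K\br{V}$ is Reedy cofibrant and $L_*$ preserves the weak equivalence $\hocolim^{\scat{cdgc}}S^K\br{V}\to\colim^{\scat{cdgc}}S^K\br{V}$. This is the Lie-algebra counterpart of Proposition~\ref{bcqpa}, and, as there, the reassuring point is a connectivity observation: the cocommutative coalgebras $S\br{\s}$ and $\Q\br{K}$ involved are simply connected, with cogenerators concentrated in degree $2$, placing them well inside the range where $L_*$ is homotopically well behaved. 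Granting this, the remaining steps are formal and identical in structure to the proof of Theorem~\ref{hcldi}.
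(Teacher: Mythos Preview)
Your proposal is correct and is precisely the argument the paper intends: its proof of Theorem~\ref{liedi} consists of the single sentence ``The following statement is proved similarly,'' and you have written out exactly what that means, substituting $L_*$, $\cl^K(U)$, $\cat{dgl}$, and $\cat{cdgc}$ for $\varOmega_*$, $\wedge^K(U)$, $\cat{dga}$, and $\cat{dgc}$ and invoking the Lie-algebra half of Proposition~\ref{symwe}. Your caution about the homotopical behaviour of $L_*$ is well placed and is the Lie analogue of Proposition~\ref{bcqpa}; as you note, the simply connected hypothesis is satisfied throughout since the coalgebras $S\br{\s}$ and $\Q\br{K}$ have cogenerators in degree~$2$.
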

Theorem~\ref{djcoform} confirms that $\lambda_K$ is a quasi-isomorphism
when $K$ is flag.
\begin{corollary}
For any simplicial complex $K$, there are isomorphisms
\begin{align*}
  H_*(\varOmega\djs(K);\Q)&\cong
  H\bigl(\hocolim^{\scat{dga}}\wedge^K(U)\bigr)\\
  \pi_*(\varOmega\djs(K))\otimes_\Z\Q&\cong
  H\bigl(\hocolim^{\scat{dgl}}\cl^K(U)\bigr)
\end{align*}
of graded algebras and Lie algebras respectively.
\end{corollary}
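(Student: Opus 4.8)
The plan is to deduce both isomorphisms by applying the homology functor to the $\Ho(\cat{dga})$- and $\Ho(\cat{dgl})$-isomorphisms already supplied by Theorems~\ref{hcldi} and~\ref{liedi}, using that every arrow in those diagrams, being a morphism of homotopy categories, induces a map of graded algebras, respectively Lie algebras, on homology; only the ``for any $K$'' halves of those theorems are needed, so the flag hypothesis plays no part. For the algebraic statement I would begin from~\eqref{loopcotor}, which identifies the cobar homology $H(\varOmega_*\Q\br{K},d)$ with $H_*(\varOmega\djs(K);\Q)$ as graded algebras. Theorem~\ref{hcldi} then provides, for arbitrary $K$, the zig-zag of isomorphisms in $\Ho(\cat{dga})$
\[
\hocolim^{\scat{dga}}\wedge^K(U)\;
\stackrel{\overline{\eta}_K}{\longleftarrow}\;
\varOmega_*\hocolim^{\scat{dgc}}S^K\br{V}\;
\stackrel{\varOmega_*\rho_K}{\longrightarrow}\;
\varOmega_*\colim^{\scat{dgc}}S^K\br{V}\;\cong\;\varOmega_*\Q\br{K},
\]
the last isomorphism coming from~\eqref{srlim}; passing to homology and splicing with~\eqref{loopcotor} yields $H_*(\varOmega\djs(K);\Q)\cong H(\hocolim^{\scat{dga}}\wedge^K(U))$ as graded algebras.

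For the Lie-algebra statement I would proceed identically, with Theorem~\ref{liedi} in place of Theorem~\ref{hcldi}, once the Lie analogue of~\eqref{loopcotor} is in hand: $H(L_*\Q\br{K})\cong\pi_*(\varOmega\djs(K))\otimes_\Z\Q$ as graded Lie algebras. I would establish this exactly as~\eqref{loopcotor} was obtained from~\eqref{cdgam}, but in the cocommutative setting. Dualising the formality of $\APL(\djs(K))$ \cite{no-ra05} produces a zig-zag of quasi-isomorphisms in $\cat{cdgc}_0$ between a cocommutative coalgebra chain model of $\djs(K)$ and its homology coalgebra $\Q\br{K}$; since $L_*$ is left Quillen on $\cat{cdgc}_0$ it carries this zig-zag to a zig-zag of quasi-isomorphisms in $\cat{dgl}$, and the factorisation of Quillen's functor through $L_*$ applied to cocommutative chains \cite{quil69}, together with the natural isomorphism $H(Q(\djs(K)))\cong\pi_*(\varOmega\djs(K))\otimes\Q$, then identifies $H(L_*\Q\br{K})$ with $\pi_*(\varOmega\djs(K))\otimes_\Z\Q$ as graded Lie algebras. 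Splicing this with the image under $H$ of the $\Ho(\cat{dgl})$-isomorphisms
\[
\hocolim^{\scat{dgl}}\cl^K(U)\;
\stackrel{\overline{\lambda}_K}{\longleftarrow}\;
L_*\hocolim^{\scat{cdgc}}S^K\br{V}\;
\stackrel{L_*\rho_K}{\longrightarrow}\;
L_*\colim^{\scat{cdgc}}S^K\br{V}\;\cong\;L_*\Q\br{K}
\]
of Theorem~\ref{liedi} completes the second assertion.

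The principal obstacle is the Lie analogue of~\eqref{loopcotor}, and inside it the claim that formality of $\APL(\djs(K))$ in $\cat{cdga}$ transfers to formality of the cocommutative coalgebra chain model of $\djs(K)$ in $\cat{cdgc}_0$. This is the Eckmann--Hilton counterpart of \cite[Theorem~4.8]{no-ra05}, routine in principle via the Quillen equivalence between $\Ho(\cat{cdgc}_0)$ and simply connected rational homotopy types, but demanding a careful check that this equivalence transports the formality quasi-isomorphism $H^*(\djs(K);\Q)\to\APL(\djs(K))$ to a quasi-isomorphism from a cocommutative chain model onto $\Q\br{K}$. The subsidiary point---that $L_*$ preserves the resulting quasi-isomorphism---is the $\cat{dgl}$ counterpart of the care taken over $\varOmega_*$ in Proposition~\ref{bcqpa}, but here it is automatic, because $(L_*,M_*)$ is a Quillen pair on all of $\cat{cdgc}_0$ (where every object is cofibrant) by Neisendorfer's theory \cite{neis78}, with no simple-connectivity restriction. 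With these in place, what remains is a mechanical composition of isomorphisms in $\Ho(\cat{dga})$ and $\Ho(\cat{dgl})$ followed by passage to homology.
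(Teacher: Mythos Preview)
Your proposal is correct and coincides with what the paper intends: the Corollary carries no proof in the paper, being taken as immediate from Theorems~\ref{hcldi} and~\ref{liedi} together with~\eqref{loopcotor} and its Lie analogue. You are right to single out the identification $H(L_*\Q\br{K})\cong\pi_*(\varOmega\djs(K))\otimes_\Z\Q$ as the one step the paper leaves implicit; your argument via coalgebraic formality and the Neisendorfer Quillen pair~\cite{neis78} is sound, and the paper in fact assembles essentially the same zig-zag later, in the proof of Theorem~\ref{djcoform}, where the portion $L_K\stackrel{\simeq}{\to} L_*C_K\stackrel{\simeq}{\leftarrow} L_*\Q\br{K}$ does not depend on the flag hypothesis.
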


%
%
%
%
%
%
%
%
%
%%%%%%%%%%%%%%%%%%%%%%%%%%%%%%%%%%%%%%%%%%%%%%%%%

\section{Models for loop spaces: flag complexes $K$}\label{mlsfk}

In this section we study the loop spaces associated to \emph{flag
  complexes} $K$. Such complexes have significantly simpler
combinatorial properties, which are reflected in the homotopy theory
of the toric spaces. We modify results of the previous section in this
context, and focus on applications to the rational Pontrjagin rings
and homotopy Lie algebras of $\varOmega\djs(K)$ and $\varOmega\zk$.

For any simplicial complex $K$, a subset $\zeta\subseteq V$ is called
a \emph{missing face} when every proper subset lies in $K$, but
$\zeta$ itself does not. If every missing face of $K$ has $2$
vertices, then $K$ is a flag complex; equivalently, $K$ is flag when
every set of vertices that is pairwise connected spans a simplex. A
flag complex is therefore determined by its 1-skeleton, which is a
graph. When $K$ is flag, we may express the Stanley-Reisner algebra as
\begin{equation}\label{srflag}
  R[K]=T_R(V)\bigr/
  (v_iv_j-v_jv_i=0\text{ for }\{i,j\}\in K,\;
  v_iv_j=0\text{ for }\{i,j\}\notin K)
\end{equation}
over any ring $R$. It is therefore \emph{quadratic}, in the sense
that it is the quotient of a free algebra by quadratic relations.

The following result of Fr\"oberg~\cite[\S3]{frob75} allows us to
calculate the Yoneda algebras $\Ext_A(\Q,\Q)$ explicitly for a class
of quadratic algebras $A$ that includes Stanley-Reisner algebras of
flag complexes.
\begin{proposition}\label{srkos}
As graded algebras, $\Ext_{\Q[K]}(\Q,\Q)$ is isomorphic to
\begin{equation}\label{srext}
  T(U)\bigr/(u_i^2=0,\; u_iu_j+u_ju_i=0\text{ for }\{i,j\}\in K)
\end{equation}
for any flag complex $K$.
\end{proposition}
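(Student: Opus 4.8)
The plan is to reduce the statement to a known Koszulity theorem of Fr\"oberg and then identify the Koszul dual explicitly. The starting point is the observation, already recorded in \eqref{srflag}, that for flag $K$ the algebra $\Q[K]$ is quadratic: it is the quotient of the free algebra $T(V)$ by the span of the quadratic elements $v_iv_j-v_jv_i$ for $\{i,j\}\in K$ together with $v_iv_j$ (equivalently $v_iv_j+v_jv_i$, after a harmless change of basis) for $\{i,j\}\notin K$. The quadratic dual of such an algebra is, by definition, the quotient of the free algebra on the dual generators by the annihilator of the space of relations; a direct computation of this annihilator inside $(V\otimes V)^*\cong U\otimes U$ produces precisely the algebra \eqref{srext}, namely $T(U)/(u_i^2,\ u_iu_j+u_ju_i\text{ for }\{i,j\}\in K)$. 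So the first step is this purely linear-algebra identification of the quadratic dual; it is mechanical and I would not belabour it.

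The substantive input is Fr\"oberg's theorem from \cite{frob75}: a quadratic algebra defined by monomial-type relations of the kind above (a quotient of $T(V)$ by a set of square-free quadratic monomials and the commutators along edges) is a \emph{Koszul algebra}. For a Koszul algebra $A$ one has the standard identification $\Ext_A(\Q,\Q)\cong A^!$, the quadratic dual, as graded algebras --- the Ext-algebra is generated in degree one and its relations are exactly the dual relations. Thus the second step is to invoke Koszulity to pass from "quadratic dual" to "$\Ext_{\Q[K]}(\Q,\Q)$," and to check that the grading conventions match: the cohomological degree in $\Ext$ corresponds to the internal degree in $A^!$, and since the $u_i$ sit in $\Ext^1$ this is consistent with assigning them dimension $1$ as in our conventions for $U$.

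Assembling these: by \eqref{srflag} the algebra $\Q[K]$ is quadratic with the indicated relations; by Fr\"oberg it is Koszul; hence $\Ext_{\Q[K]}(\Q,\Q)$ is isomorphic as a graded algebra to the quadratic dual $\Q[K]^!$; and the quadratic dual is computed to be \eqref{srext}. This gives the claimed isomorphism.

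The main obstacle, such as it is, is bookkeeping rather than conceptual: one must be careful that Fr\"oberg's hypotheses genuinely cover the mixed relation set (square-free monomials for non-edges \emph{and} commutators for edges, rather than a pure monomial ideal), and one must track signs and gradings so that the exterior-type relations $u_i^2=0$ and $u_iu_j+u_ju_i=0$ emerge with the correct signs from dualising $v_iv_j=0$ and $v_iv_j-v_jv_i=0$ respectively --- the latter dualises to the \emph{symmetric} relation on $U$, which is the anticommutator once the generators are odd. Everything else is a direct application of the cited results.
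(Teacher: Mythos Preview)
Your proposal is correct and matches the paper's treatment: the paper gives no proof at all, simply attributing the statement to Fr\"oberg~\cite[\S3]{frob75}, and the Remark immediately following the proposition spells out exactly your reading---that \eqref{srext} is the quadratic dual of \eqref{srflag} and that the content of the proposition is the Koszulity of $\Q[K]$ for flag $K$. Your write-up is thus an accurate unpacking of what the citation means.
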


\begin{remark}
  Algebra~\eqref{srext} is the \emph{quadratic dual} of
  \eqref{srflag}. A quadratic algebra $A$ is called
  \emph{Koszul}~\cite{sh-yu97} if its quadratic dual coincides with
  $\Ext_A(\Q,\Q)$, so Proposition \ref{srkos} asserts that $\Q[K]$ is
  Koszul whenever $K$ is flag.
\end{remark}

\begin{theorem}\label{hldj}
For any flag complex $K$, there are isomorphisms
\begin{align*}
H_*(\varOmega\djs(K);\Q)&\;\cong\;
T(U)\bigr/
  (u_i^2=0,\; u_iu_j+u_ju_i=0\text{ for }\{i,j\}\in K)\\
\pi_*(\varOmega\djs(K))\otimes_\Z\Q&\;\cong\;
\fl(U)\bigr/
  \bigl([u_i,u_i]=0,\; [u_i,u_j]=0\text{ for }\{i,j\}\in K\bigr).
\end{align*}
\end{theorem}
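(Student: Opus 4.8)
The plan is to deduce both isomorphisms from the results already assembled. First I would treat the Pontrjagin ring. By the isomorphism \eqref{loopcotor}, $H_*(\varOmega\djs(K);\Q)$ is isomorphic to $\mathop{\mathrm{Cotor}}\nolimits_{\Q\br{K}}(\Q,\Q)$ as graded algebras; and by \eqref{cotorext}, this $\mathrm{Cotor}$ is isomorphic to the Yoneda algebra $\Ext_{\Q[K]}(\Q,\Q)$, since $\Q\br{K}$ is the graded dual of the Stanley-Reisner algebra $\Q[K]$. Now Proposition \ref{srkos} (Fr\"oberg's computation) identifies $\Ext_{\Q[K]}(\Q,\Q)$, for any flag complex $K$, with the quadratic dual algebra \eqref{srext}, namely $T(U)/(u_i^2=0,\;u_iu_j+u_ju_i=0\text{ for }\{i,j\}\in K)$. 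Chaining these three isomorphisms of graded algebras gives the first displayed formula.

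For the homotopy Lie algebra, I would run the Eckmann-Hilton dual argument using the coalgebraic formality \eqref{cdgam}. Since $C_*(\djs(K);\Q)$ is quasi-isomorphic in $\cat{cdgc}$ to $Q\br{K}$, and $\djs(K)$ is simply connected, Quillen's functor yields $\pi_*(\varOmega\djs(K))\otimes_\Z\Q\cong H(L_*\,Q\br{K})$ as graded Lie algebras, where $L_*$ is the left adjoint of \eqref{lcadj}. On the other hand, $\pi_*(\varOmega\djs(K))\otimes_\Z\Q$ is the primitive Lie algebra of the Hopf algebra $H_*(\varOmega\djs(K);\Q)$, which by the first part is the quadratic dual algebra \eqref{srext}. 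That algebra is the universal enveloping algebra of the free graded Lie algebra on the $u_i$ modulo the relations $[u_i,u_i]=0$ and $[u_i,u_j]=0$ for $\{i,j\}\in K$; since all generators $u_i$ are of degree $1$ (odd), $[u_i,u_i]=2u_i^2$, so the relation $u_i^2=0$ of \eqref{srext} is exactly $[u_i,u_i]=0$, and $u_iu_j+u_ju_i$ is exactly $[u_i,u_j]$. Taking primitives therefore recovers $\fl(U)/([u_i,u_i]=0,\;[u_i,u_j]=0\text{ for }\{i,j\}\in K)$, which is the second displayed formula.

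The step I expect to carry the real weight is the identification of the quadratic dual algebra \eqref{srext} as the enveloping algebra of the stated quotient Lie algebra, and correspondingly the claim that the primitives of \eqref{srext} are precisely that quotient with no further relations. One must check that \eqref{srext} is genuinely a connected cocommutative Hopf algebra (the coalgebra structure being inherited via \eqref{barcobar}), invoke Milnor-Moore over $\Q$ to write it as $U(\mathfrak g)$ with $\mathfrak g$ its primitives, and then argue that $\mathfrak g$ is generated by the $u_i$ with only the quadratic relations -- equivalently, that \eqref{srext} is Koszul so that no higher relations appear in the Lie algebra. This is where Proposition \ref{srkos}, together with the standard fact that the quadratic dual of a Koszul algebra is Koszul and is the enveloping algebra of the free-quadratic Lie algebra on its generators, does the work; the only care needed is the sign/degree bookkeeping that turns $u_i^2$ into $\tfrac12[u_i,u_i]$ in the graded setting. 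Everything else is a formal concatenation of the isomorphisms \eqref{loopcotor}, \eqref{cotorext}, \eqref{cdgam}, the adjunctions \eqref{barcobar} and \eqref{lcadj}, and Proposition \ref{srkos}.
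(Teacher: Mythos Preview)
Your proof is correct and follows essentially the same route as the paper: the first isomorphism is obtained exactly as you say by chaining \eqref{loopcotor}, \eqref{cotorext}, and Proposition~\ref{srkos}, and the paper's argument for the second is simply ``by restriction to primitives,'' which is the core of your argument as well. Your additional discussion of Milnor--Moore, the universal enveloping algebra identification, and the sign bookkeeping usefully spells out what the paper leaves implicit; the opening detour through $L_*$ and \eqref{cdgam} is harmless but unnecessary, since you ultimately rely on the primitives of the Hopf algebra computed in the first part rather than on Quillen's functor directly.
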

\begin{proof}
  The first isomorphism combines \eqref{cotorext} and
  \eqref{loopcotor} with Proposition~\ref{srkos}, and the second
  follows by restriction to primitives.
\end{proof}
\begin{corollary}\label{ldjcolim}
  The algebras $H_*(\varOmega\djs(K);\Q)$ and
  $\pi_*(\varOmega\djs(K))$ are isomorphic to
  $\colim^{\scat{dga}}\wedge^K(U)$ and $\colim^{\scat{dgl}}\!\cl^K(U)$
  respectively, whenever $K$ is flag.
\end{corollary}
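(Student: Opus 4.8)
The plan is to identify the two colimits explicitly, as presentations by generators and relations, and then to recognise the resulting algebra and Lie algebra as exactly those computed in Theorem~\ref{hldj} for $H_*(\varOmega\djs(K);\Q)$ and $\pi_*(\varOmega\djs(K))\otimes_\Z\Q$; the corollary is then obtained by composing the relevant isomorphisms. Throughout one must keep in mind that the colimits are formed in the \emph{noncommutative} categories $\cat{dga}$ and $\cat{dgl}$, as the paper emphasises: although every object of the diagrams $\wedge^K(U)$ and $\cl^K(U)$ is symmetric, the colimit behaves like a free product rather than a symmetrisation.

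For the algebra, I would argue from the universal property of $C=\colim^{\scat{dga}}\wedge^K(U)$. A morphism $C\to B$ in $\cat{dga}$ amounts to a family of algebra maps $\wedge(\s)\to B$, one for each face $\s\in K$, compatible with all inclusions $\s\subseteq\tau$; and an algebra map $\wedge(\s)\to B$ is precisely a choice of degree-one elements $b_i\in B$, indexed by the vertices $v_i\in\s$, satisfying $b_i^2=0$ and $b_ib_j+b_jb_i=0$ for $i,j\in\s$. Matching these data across the diagram forces a single element $b_i$ per vertex, with $b_i^2=0$; the one-dimensional faces $\{v_i,v_j\}$, which occur as objects of $\cat{cat}(K)$ exactly when $\{i,j\}\in K$, contribute the relations $b_ib_j+b_jb_i=0$ for those pairs; and faces of cardinality $\ge3$ contribute nothing new, since each pair of vertices they contain is already an edge of $K$, as $K$ is closed under passing to subsets. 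By the Yoneda lemma this identifies $C$ with $T(U)/(u_i^2=0,\ u_iu_j+u_ju_i=0\text{ for }\{i,j\}\in K)$, which is exactly the algebra appearing in Theorem~\ref{hldj}. The Lie-algebraic case runs in parallel: a morphism $\cl(\s)\to L$ in $\cat{dgl}$ is a choice of degree-one $l_i\in L$ with all brackets $[l_i,l_j]$ vanishing (including $[l_i,l_i]$, since $\cl(\s)$ is abelian), so the same bookkeeping gives $\colim^{\scat{dgl}}\cl^K(U)\cong\fl(U)/([u_i,u_i]=0,\ [u_i,u_j]=0\text{ for }\{i,j\}\in K)$, matching Theorem~\ref{hldj}; here $[u_i,u_i]$ is genuinely nonzero in $\fl(U)$, consistent with the vertex Lie algebra $\cl(\{v_i\})$ being the one in which that bracket is killed by fiat.

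The only part of this that requires any genuine care — and what I would flag as the main obstacle — is the step asserting that faces of size at least three impose no relations beyond the quadratic ones carried by edges; this uses both the downward-closedness of $K$ and the fact that the defining relations of an exterior algebra, respectively of an abelian Lie algebra, are generated in arities one and two. It should be stressed that flagness itself plays no role in the colimit computation — the presentation of $\colim^{\scat{dga}}\wedge^K(U)$ is valid for an arbitrary complex $K$ — and enters only through Theorem~\ref{hldj}, hence ultimately through Fr\"oberg's Proposition~\ref{srkos}, which is precisely what makes the two presentations coincide.
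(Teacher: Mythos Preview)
Your proposal is correct and follows the same approach as the paper: identify the colimit explicitly as the quotient of $T(U)$ (respectively $\fl(U)$) by the quadratic relations, then invoke Theorem~\ref{hldj}. The paper's proof is a one-liner that simply asserts the algebras of Theorem~\ref{hldj} \emph{are} the stated colimits ``because $\colim^{\scat{dga}}\wedge^K(U)$ is formed in the non-commutative setting of $\cat{dga}$''; you have supplied the universal-property computation that justifies this assertion, and your remark that flagness enters only through Theorem~\ref{hldj} (not through the colimit identification) is correct and worth making explicit.
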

\begin{proof}
  The algebras of Theorem \ref{hldj} are the colimits of the stated
  diagrams, because $\colim^{\scat{dga}}\wedge^K(U)$ is formed in the
  non-commutative setting of $\cat{dga}$.
\end{proof}

The \emph{Poincar\'e series} $F(\Q[K];t)$ is computed
in~\cite[Theorem.~II.1.4]{stan96} for any complex $K$, although we
find it convenient to adopt the re-grading associated to
$1$--dimensional vertices $u_i$. If $K$ is $(n-1)$--dimensional and
has $f_i$ faces of dimension $i$ for $i\geq0$, the series becomes
\begin{equation}\label{psfr}
  \sum_{i=-1}^{n-1}\frac{f_it^{(i+1)}}{(1-t)^{i+1}}\;=\;
%  \frac{1}{(1-t)^n}\sum_{i=0}^nh_i\,t^{i},
\frac{h_0+h_1t+\ldots+h_nt^{n}}{(1-t)^n}\;,
\end{equation}
where $f_{-1}=1$ and the integers $h_i$ are defined by \eqref{psfr}
for $0\le i\le n$. The integral vectors $(f_{-1},\ldots,f_{n-1})$ and
$(h_0,\ldots,h_n)$ are known to combinatorialists as the
\emph{$f$-vector} and \emph{$h$-vector} of $K$ respectively.

\begin{proposition}\label{loopsps}
For any flag complex $K$, we have that
\[
  F\bigl( H_*(\varOmega\djs(K);\Q);t \bigr)\;=\;
  \frac{(1+t)^n}{1-h_1t+\ldots+(-1)^nh_nt^n}\;.
\]
\end{proposition}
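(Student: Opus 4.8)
The plan is to compute the Poincar\'e series of the graded algebra
\[
A\;=\;T(U)\bigr/(u_i^2=0,\;u_iu_j+u_ju_i=0\text{ for }\{i,j\}\in K),
\]
which is isomorphic to $H_*(\varOmega\djs(K);\Q)$ by Theorem~\ref{hldj}, and to relate it to the Poincar\'e series of $\Q[K]$ recorded in \eqref{psfr}. The key observation is that $A$ is the quadratic dual of the Stanley--Reisner algebra $\Q[K]$, and that $\Q[K]$ is Koszul for flag $K$ by Proposition~\ref{srkos} (Fr\"oberg's theorem). For a Koszul algebra $\Lambda$ with quadratic dual $\Lambda^!$, the two Poincar\'e series satisfy the classical reciprocity
\[
F(\Lambda;t)\cdot F(\Lambda^!;-t)\;=\;1,
\]
which follows by writing the Koszul complex (minimal free resolution of $\Q$ over $\Lambda$) and taking Euler characteristics degree by degree.

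First I would recall or cite this reciprocity. Applying it with $\Lambda=\Q[K]$ and $\Lambda^!=A$, and using that by \eqref{cotorext} and \eqref{loopcotor} we have $H_*(\varOmega\djs(K);\Q)\cong\Ext_{\Q[K]}(\Q,\Q)$, gives
\[
F\bigl(H_*(\varOmega\djs(K);\Q);t\bigr)\;=\;\frac{1}{F(\Q[K];-t)}.
\]
Second I would substitute the value of $F(\Q[K];t)$ from \eqref{psfr}, namely $(h_0+h_1t+\dots+h_nt^n)/(1-t)^n$ with $h_0=1$. Replacing $t$ by $-t$ gives
\[
F(\Q[K];-t)\;=\;\frac{1-h_1t+h_2t^2-\dots+(-1)^nh_nt^n}{(1+t)^n},
\]
and inverting yields exactly
\[
F\bigl(H_*(\varOmega\djs(K);\Q);t\bigr)\;=\;\frac{(1+t)^n}{1-h_1t+\dots+(-1)^nh_nt^n},
\]
as claimed.

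The main obstacle, and the only non-formal point, is establishing (or justifying the citation of) the Koszul reciprocity $F(\Lambda;t)F(\Lambda^!;-t)=1$. This requires knowing that the bigraded Koszul complex of a Koszul algebra is a minimal resolution concentrated on the diagonal, so that homological and internal degrees agree up to sign and the alternating sum of Betti numbers collapses to the generating-function identity; this is standard (see, e.g., the references on Koszul algebras already cited in the paper), but one should be careful that the internal grading used here is the one with $1$-dimensional vertices $u_i$, matching the re-grading of \eqref{psfr}. Everything else is a direct substitution.
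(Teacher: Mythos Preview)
Your proposal is correct and follows essentially the same route as the paper: both identify $H_*(\varOmega\djs(K);\Q)$ with the quadratic dual of $\Q[K]$, invoke the Koszul reciprocity $F(\Q[K];-t)\cdot F(H_*(\varOmega\djs(K));t)=1$ (which the paper, like you, attributes to Fr\"oberg), and then substitute the expression \eqref{psfr}. Your write-up is slightly more expansive in justifying the reciprocity, but the argument is the same.
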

\begin{proof}
  Since $H_*(\varOmega\djs(K);\Q)$ is the quadratic dual of $\Q[K]$,
  the identity
\[
  F\bigl(\Q[K];-t\bigr)\cdot F\bigl(H_*(\varOmega\djs(K));t\bigr)\;=\;1
\]
follows from Fr\"oberg~\cite[\S4]{frob75}. When applied to
\eqref{psfr}, it yields our result.
\end{proof}

The Poincar\'e series of $\pi_*(\varOmega\djs(K))\otimes_\Z\Q$ is
calculated in~\cite[\S4.2]{de-su06}.

We conclude by examining the coformality $\djs(K)$, and applying the
results to Theorems~\ref{hcldi} and \ref{liedi}.

\begin{theorem}\label{djcoform}
The space $\djs(K)$ is coformal if and only if $K$ is flag.
\end{theorem}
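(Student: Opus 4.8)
The plan is to reduce the coformality of $\djs(K)$ to the Koszulness of its cohomology ring $\Q[K]$, exploiting the loop-space models of Section~\ref{mlsak} together with the fact (from \cite{no-ra05}, recalled in~\eqref{cdgam}) that $\djs(K)$ is formal, so that $Q(\djs(K))$ is modelled in $\Ho(\cat{dgl})$ by $L_*\Q\br{K}$ and $C_*(\varOmega\djs(K);\Q)$ by $\varOmega_*\Q\br{K}$ in $\cat{dga}$. Since $\Q\br{K}$ is cocommutative, $\varOmega_*\Q\br{K}$ is the universal enveloping algebra of $L_*\Q\br{K}$ (Subsection~\ref{adpa}), and over $\Q$ the homology $H(\varOmega_*\Q\br{K})$ is the enveloping algebra of $H(L_*\Q\br{K})$; in particular $\djs(K)$ is coformal precisely when $L_*\Q\br{K}$ is formal in $\cat{dgl}$, equivalently when $\varOmega_*\Q\br{K}$ is formal in $\cat{dga}$.

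Suppose first that $K$ is flag. I would show that the fibration $\eta_K\colon\varOmega_*\Q\br{K}\to\colim^{\scat{dga}}\wedge^K(U)$ of~\eqref{etaK} is a quasi-isomorphism. By Corollary~\ref{ldjcolim} its target is the quadratic dual of $\Q[K]$, carried with zero differential; by~\eqref{loopcotor} and~\eqref{cotorext} its source has homology $\mathop{\mathrm{Cotor}}\nolimits_{\Q\br{K}}(\Q,\Q)\cong\Ext_{\Q[K]}(\Q,\Q)$; and $\eta_K$ carries the degree-one cobar class $\chi_i$ to $u_i$ (Proposition~\ref{symwe}). The key input is Fr\"oberg's theorem, Proposition~\ref{srkos}: for flag $K$ the algebra $\Ext_{\Q[K]}(\Q,\Q)$ is exactly this quadratic dual, hence is generated by the classes of the $\chi_i$. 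Consequently $H(\eta_K)$ is an algebra endomorphism of $\Ext_{\Q[K]}(\Q,\Q)$ fixing a generating set, so it is the identity and $\eta_K$ is a quasi-isomorphism. The parallel argument of Theorem~\ref{liedi}, passing to primitives, shows that $\lambda_K$ is a quasi-isomorphism too, so $L_*\Q\br{K}\simeq\colim^{\scat{dgl}}\cl^K(U)$, which has zero differential. Hence $Q(\djs(K))$ is formal and $\djs(K)$ is coformal.

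For the converse I would argue by contraposition. If $K$ is not flag it has a minimal non-face $\zeta$ with $|\zeta|\geq 3$, so the square-free monomial $v_\zeta$ is a minimal generator of the Stanley--Reisner ideal in internal degree $2|\zeta|\geq 6$; thus $\Q[K]$ is not quadratic, and this relation contributes an element of $\Ext^2_{\Q[K]}(\Q,\Q)$ in an internal degree not reached by products of $\Ext^1$, so $\Ext_{\Q[K]}(\Q,\Q)$ is not generated in degree one. By~\eqref{cotorext} and the Milnor--Moore theorem, $\pi_*(\varOmega\djs(K))\otimes_\Z\Q$ is then not generated as a graded Lie algebra in degree one. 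On the other hand, if $\djs(K)$ were coformal then, being also formal, its minimal Sullivan model would be the Chevalley--Eilenberg cochain algebra of $\pi_*(\varOmega\djs(K))\otimes_\Z\Q$, and the compatibility of this bigraded model with the formality quasi-isomorphism to $(\Q[K],0)$ would force $\Q[K]$ to be a Koszul algebra, hence quadratic; equivalently, it would force $\pi_*(\varOmega\djs(K))\otimes_\Z\Q$ to be generated in degree one. This contradiction shows $\djs(K)$ is not coformal.

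The delicate point is the final step of the converse: making precise that a simply connected space which is at once formal and coformal has Koszul rational cohomology. I would establish this either from the theory of bigraded minimal models — a coformal space carries a bigraded model, which is formal exactly when its bigrading is pure, i.e.\ when $H^*$ is Koszul — or, staying inside the present formalism, by combining the formality of $\varOmega_*\Q\br{K}$ that coformality provides with the quasi-isomorphism $B_*\varOmega_*\Q\br{K}\simeq\Q\br{K}$ of Subsection~\ref{adpa} and the formality of $\djs(K)$, so that the bar construction on $(\Ext_{\Q[K]}(\Q,\Q),0)$ computes $\Q\br{K}$ with zero differential; an internal-degree count then identifies $\Ext_{\Q[K]}(\Q,\Q)$ with the quadratic dual of $\Q[K]$ and exhibits $\Q[K]$ as Koszul. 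The remaining verifications — that $\eta_K$ and $\lambda_K$ are the maps of Theorems~\ref{hcldi} and~\ref{liedi}, and that passage to primitives preserves the quasi-isomorphisms in play over $\Q$ — are routine given Propositions~\ref{bcqpa} and~\ref{symwe}.
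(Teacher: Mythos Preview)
Your forward direction—showing that $\eta_K$ (equivalently $\lambda_K$) is a quasi-isomorphism via Fr\"oberg's theorem and concluding that $L_*\Q\br{K}$ is formal—matches the paper's argument closely; the paper phrases the key step as ``both algebras have the same homology'' and then restricts to primitives, but the content is the same. The paper is somewhat more explicit about linking $L_*\Q\br{K}$ to Quillen's functor via the minimal-model zig-zag~\eqref{cform}, which you absorb into your opening setup.

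Your converse, however, takes a genuinely different route. The paper exhibits a concrete obstruction: a missing face $\zeta$ with $|\zeta|\geq 3$ determines a non-trivial higher Samelson bracket in $\pi_*(\varOmega\djs(K))\otimes_\Z\Q$ (computed for $K=\partial\varDelta^{m-1}$ in Example~\ref{accex}(2), and transported to general $K$ via the retract onto the full subcomplex on~$\zeta$), and such brackets obstruct coformality just as higher Massey products obstruct formality. You instead invoke the structural principle that a simply connected space which is simultaneously formal and coformal must have Koszul rational cohomology, forcing $\Q[K]$ to be quadratic and hence $K$ to be flag. This is correct—your bar-construction sketch does give $H\bigl(B_*(\Ext_{\Q[K]}(\Q,\Q),0)\bigr)\cong\Q\br{K}$, and a bigrading count then yields Koszulness—but it imports a fact not established within the paper and leaves that bookkeeping to the reader. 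The paper's approach is more self-contained and makes the obstruction tangible; yours is more conceptual and explains \emph{why} flagness is exactly the dividing line.
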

\begin{proof}
  If $K$ is flag, we may compose \eqref{etaK} with the first
  isomorphism of Corollary~\ref{ldjcolim} to obtain an epimorphism
  $\varOmega_*\Q\br{K}\to H_*(\varOmega\djs(K);\Q)$ in $\cat{dga}$; it
  is a quasi-isomorphism because both algebras have the same homology.
  Restricting to primitives yields a quasi-isomorphism $e\colon
  L_*(\Q\br{K})\rightarrow\pi_*(\varOmega\djs(K))\otimes_\Z\Q$ in
  $\cat{dgl}$, by definition of $L_*$.

  Now choose a minimal model $M_K\to\Q[K]$ for the Stanley-Reisner
  algebra in $\cat{cdga}$. Its graded dual $\Q\br{K}\to C_K$ is a
  minimal model for $\Q\br{K}$ \cite[\S 5]{ne-mi78} in $\cat{cdgc}$,
  so $\varOmega_*\Q\br{K}\to\varOmega_*C_K$ is a weak equivalence in
  $\cat{dga}$. Restricting to primitives provides the the central map
  in the zig-zag
\begin{equation}\label{cform}
  L_K\stackrel{\simeq}{\longrightarrow}L_*C_K
  \stackrel{\simeq\!}{\longleftarrow}L_*\Q\br{K}
  \stackrel{e}{\longrightarrow}\pi_*(\varOmega\djs(K))\otimes_\Z\Q
\end{equation}
of quasi-isomorphisms in $\cat{dgl}$, where $L_K$ is a minimal model
for $\djs(K)$ in $\cat{dgl}$ \cite[\S 8]{ne-mi78}. Hence $\djs(K)$ is
coformal.

On the other hand, every missing face of $K$ with $>2$ vertices
determines a non-trivial higher Samelson bracket in
$\pi_*(\varOmega\djs(K))\otimes_\Z\Q$ (see Example~\ref{accex}(2)).
The existence of such brackets in $\pi_*(\varOmega X)\otimes_\Z\Q$
ensures that $X$ cannot be coformal, just as higher Massey products in
$H^*(X;\Q)$ obstruct formality.
\end{proof}

%
%
%
%
%
%
%
%
%
%%%%%%%%%%%%%%%%%%%%%%%%%%%%%%%%%%%%%%%%%%%%%%%%%

\section{Examples}\label{examp}

In our final section we review examples that illustrate calculations
with the cobar construction, and clarify the importance of higher
brackets.  The examples refer to our results for flag complexes, and
offer clues to the structure of Pontrjagin rings and rational homotopy
Lie algebras for more general~$K$.

\begin{examples}\label{ex1}\hfill{}

(1) Let $K$ be the simplex $\Delta^{n-1}$, so that $h_0=1$ and
$h_i=0$ for $i>0$.  Then $\djs(K)$ is homeomorphic to $(\C
P^\infty)^n$, and $\varOmega\djs(K)$ is homotopy equivalent to
$T^n$; also $F( H_*(\varOmega\djs(K));t )\;=\;(1+t)^n$, in
accordance with Corollary~\ref{loopsps}.

(2) Let $K$ be the boundary $\partial\Delta^n$, so that
$h_0=\dots=h_n=1$. Then $\varOmega\djs(K)$ is homotopy equivalent to
$\varOmega S^{2n+1}\times T^{n+1}$, and
\[
  F\bigl( H_*(\varOmega\djs(K));t \bigr)\;=\;
  \frac{(1+t)^{n+1}}{1-t^{2n}}.
\]
On the other hand, Corollary~\ref{loopsps} gives
\[
  \frac{(1+t)^n}{1-t+t^2+\ldots+(-1)^nt^n}\;=\;
\frac{(1+t)^{n+1}}{1+(-1)^nt^{n+1}}.
\]
The formulae agree if $n=1$, in which case $K$ is flag, but differ
otherwise.
\end{examples}

\begin{examples}\label{accex}\hfill{}

(1) Let $K$ be a discrete complex on $m$ vertices. Over any ring
$R$, the cobar construction $\varOmega_*R\langle K\rangle$ on the
corresponding Stanley-Reisner coalgebra is generated as an algebra
by the elements of the form $\chi_{i\ldots i}$ with $i\in[m]$. The
first identity of~\eqref{cobarrel} still holds, but
$\chi_i\otimes\chi_j+\chi_j\otimes\chi_i$ is no longer a
coboundary for $i\ne j$ since there is no element $\chi_{ij}$ in
$\varOmega_*R\langle K\rangle$. We obtain a quasi-isomorphism
\[
\varOmega_*R\langle K\rangle\longrightarrow
T_R(u_1,\ldots,u_m)/(u_i^2=0,\; 1\le i\le m)
\]
that maps $\chi_i$ to the homology class $u_i$. The right hand side is
isomorphic to $H_*(\varOmega\djs(K);R)$, in accordance with
Theorem~\ref{hldj}.

The space $\djs(K)$ is a wedge of $m$ copies of $\C P^\infty$. In
this case, $\zk$ is the homotopy fibre of the inclusion $\bigvee^m\C
P^\infty\to(\C P^\infty)^m$ of axes, and may be identified
with the complement
\begin{equation}\label{wedge}
   \C^m\setminus\bigcup_{1\le i<j\le m}\{z_i=z_j=0\}
\end{equation}
of the codimension-two coordinate subspaces
(see~\cite[Example~8.15]{bu-pa02}). If $m=3$, its homotopy type may be
identified by desuspending as follows:
\begin{multline*}
  \C^3\setminus\bigcup_{1\le i<j\le 3}\{z_i=z_j=0\}
  \;\simeq\; S^5\setminus(S^1\cup S^1\cup S^1)\;\simeq\;\Sigma^3
  \bigl(S^2\setminus(S^1\cup S^1\cup S^1)\bigr)\\
  \;\simeq\;\Sigma^3(S^0\vee S^0\vee S^0\vee
  S^1\vee S^1)\;\simeq\; S^3\vee S^3\vee S^3\vee
  S^4\vee S^4,
\end{multline*}
where the three circles are disjoint in $S^5$. By~\cite{gr-th04}, the
complement~\eqref{wedge} is homotopy equivalent to a wedge of spheres
for all $m$.

The loop space $\varOmega\djs(K)$ is homotopy equivalent to a free
product \mbox{$T*\dots*T$} of $m$ circles~\cite[Example~6.8]{p-r-v04},
and the above calculation shows that its homology is a free product of
$m$ exterior algebras on a single generator of degree $1$. Also,
$\varOmega\zk$ is homotopy equivalent to the commutator subgroup of
$T*\dots*T$.

(2) Consider the simplest non-flag complex $K=\partial\Delta^2$,
of Example \ref{ex1}(2). Apart from the elements $\chi_i$ and
their products in $\varOmega_*R\langle\partial\Delta^2\rangle$,
there is an additional 4-dimensional cycle
\[
  \psi:=\chi_1\chi_{23}+\chi_2\chi_{13}+\chi_3\chi_{12}+
  \chi_{12}\chi_3+\chi_{13}\chi_2+\chi_{23}\chi_1,
\]
whose failure to bound is due to the non-existence of
$\chi_{123}$. Relations~\eqref{cobarrel} hold, and give rise to
the exterior relations between the corresponding homology classes
$u_1,u_2,u_3$. Moreover, a direct check shows that the elements
$\chi_i\psi-\psi\chi_i$ are boundaries for $1\le i\le 3$. We
therefore have
\[
  H(\varOmega_*R\langle\partial\Delta^2\rangle)\;=\;
  H_*(\varOmega\djs(\partial\Delta^2);R)\;\cong\;
  \wedge(u_1,u_2,u_3)\otimes S(w),
\]
where $w=[\psi]$, \ $\deg u_i=1$, \ $\deg w=4$. This calculation
generalises easily to the case $K=\partial\Delta^{m-1}$ with
$m\ge3$, and we obtain similarly
\begin{equation}\label{seew}
  H_*(\varOmega\djs(\partial\Delta^{m-1});R)\;\cong\;
  \wedge(u_1,\ldots,u_m)\otimes S(w),
\end{equation}
where $w$ is the homology class of the $(2m-2)$-cycle $\psi\in
\varOmega_*R\langle\partial\Delta^{m-1}\rangle$, whose failure to
bound is due to the non-existence of $\chi_{1\ldots m}$.
%So it occurs that the Pontrjagin homology
%ring is commutative in this example. This is quite a misleading
%occasion, though. For $K=\partial\Delta^{m-1}$ we get
%$\zk=S^{2m-1}$ and
The exact sequence~\eqref{paseq} takes the form
\[
  0\longrightarrow S(w)\longrightarrow
H_*(\varOmega\djs(\partial\Delta^{m-1});\Z)\longrightarrow\wedge(U)
\longrightarrow 0,
\]
where $\deg w=2m-2$. It follows that this sequence splits for $m\ge3$,
but not for $m=2$ as the previous calculation shows.

Unlike the situation in the previous example, there is no
quasi-isomorphism $\varOmega_*R\langle\partial\Delta^{m-1}\rangle\to
H(\varOmega_* R\langle\partial\Delta^{m-1}\rangle)$ for $m\ge3$. The
element $w$ of \eqref{seew} is the \emph{higher commutator product};
it is the Hurewicz image of the \emph{higher Samelson product} of
$1$--dimensional generators $u_i\in\pi_1(\varOmega\djs(K))$ for $1\le
i\le m$, see~\cite{will72}. It reduces to the ordinary commutator for
$m=2$. The fact that $w$ is the non-trivial higher commutator product
of $u_1,\ldots,u_m$ constitutes the additional homology information
necessary to distinguish between the topological monoids
$\varOmega\djs(\partial\Delta^{m-1})$ and $\varOmega S^{2m-1}\times
T^V$.

(3) We finish by considering a more complicated non-flag complex,
namely the $1$--skeleton of $\Delta^3$. Arguments similar to those
of Example (2) show that the Pontrjagin ring
$H_*(\varOmega\djs(K);R)$ is multiplicatively generated by four
$1$--dimensional classes $u_1,\ldots,u_4$ and four
$4$--dimensional classes $w_{123}$, $w_{124}$, $w_{134}$,
$w_{234}$, corresponding to the four missing faces with three
vertices each.
%For example,
%$w_{123}$ is the homology class of the cycle $\psi_{123}$ which
%may be thought of as the ``boundary of the non-existing element
%$\chi_{123}$".
Identities~\eqref{cobarrel} give rise to the exterior relations
between $u_1,\ldots,u_4$. We may easily check that $u_i$ commutes
with $w_{jkl}$ if $i\in\{j,k,l\}$. The remaining non-trivial
commutators are subject to one extra relation, which can be
derived as follows. Consider the relation
\begin{equation}\label{reln}
  d\chi_{1234}\;=\;(\chi_1\chi_{234}+\chi_{234}\chi_1)
  +\ldots+(\chi_4\chi_{123}+\chi_{123}\chi_4)+\beta
\end{equation}
in $\varOmega_*S_R\br{v_1,v_2,v_3,v_4}$, where $\beta$ consists of
terms $\chi_{\sigma}\chi_{\tau}$ such that $|\sigma|=|\tau|=2$.
Denote the first four summands in the right hand side of
\eqref{reln} by $\alpha_1$, $\alpha_2$, $\alpha_3$, $\alpha_4$
respectively, and apply the differential to both sides. Observing
that
$d\alpha_1=-\chi_1\psi_{234}+\psi_{234}\chi_1=-[\chi_1,\psi_{234}]$,
and similarly for $d\alpha_2$, $d\alpha_3$ and $d\alpha_4$, we
obtain
\[
  [\chi_1,\psi_{234}]+[\chi_2,\psi_{134}]+
  [\chi_3,\psi_{124}]+[\chi_4,\psi_{123}]\;=\;d\beta.
\]
The outcome is an isomorphism
\[
  H_*(\varOmega\djs(K);R)\;\cong\;
  T_R(u_1,u_2,u_3,u_4,w_{123},w_{124},w_{134},w_{123})/I,
\]
where $\deg w_{ijk}=4$ and $I$ is generated by three types of
relation:
\begin{enumerate}
\item exterior algebra relations for $u_1,u_2,u_3,u_4$;
\item $[u_i,w_{jkl}]=0$ for $i\in\{j,k,l\}$;
\item
  $[u_1,w_{234}]+[u_2,w_{134}]+[u_3,w_{124}]+[u_4,w_{123}]=0$.
\end{enumerate}
As $w_{ijk}$ is the higher commutator of $u_i$, $u_j$ and $u_k$,
(3) may be considered as a higher analogue of the Jacobi identity.
\end{examples}

\bibliographystyle{amsalpha}

\end{document}